\documentclass[12pt,a4paper,oneside]{amsart}
\usepackage[left=1.1in, right=0.8in, top=1.0in, bottom=1.0in]{geometry}
\usepackage{graphicx}
\usepackage{setspace}
\usepackage{indentfirst} 
\usepackage{cases}
\usepackage{wrapfig}
\usepackage[toc,page]{appendix}
\usepackage{amsmath}
\usepackage{amsthm}
\usepackage{amssymb}
\usepackage{enumerate}
\usepackage{mathrsfs}
\usepackage{dcolumn}
\usepackage{xcolor}
\usepackage{mathtools}

\newcolumntype{L}{D{.}{.}{2,5}}
\theoremstyle{plain}
\newtheorem{thm}{Theorem}[section]
\newtheorem{lemma}[thm]{Lemma}
\newtheorem{remark}[thm]{Remark}
\newtheorem{mydef}[thm]{Definition}

\newtheorem{corollary}[thm]{Corollary}
\newtheorem{proposition}[thm]{Proposition}
\newtheorem{question}[thm]{Question}

\DeclareMathOperator{\Id}{Id}

\DeclareMathOperator{\inte}{int}

\DeclareMathOperator{\CAT}{CAT}

\DeclareMathOperator{\diam}{\text{diam}}

\theoremstyle{definition}
 \usepackage{hyperref}
 \hypersetup{
    colorlinks=true,
    linkcolor=black,
    filecolor=magenta,      
    urlcolor=cyan}

\begin{document}
	\title[Existence and uniqueness of optimal transport maps in $\CAT(0)$ spaces]{Existence and uniqueness of optimal transport maps in locally compact $\CAT(0)$ spaces}
	\author{{A. B\"erd\"ellima}}
	
		\AtEndDocument{\bigskip{%
		  \textsc{Berlin 10709, Germany.} }}
	\maketitle
	\begin{abstract}
	We show that in a locally compact complete $\CAT(0)$ space satisfying positive angles property and a disintegration regularity for its canonical Hausdorff measure, there exists a unique optimal transport map that push-forwards a given absolutely continuous probability measure to another probability measure. In particular this holds for the Riemannian manifolds of non-positive sectional curvature and $\CAT(0)$ Euclidean polyhedral complexes. Moveover we give a polar factorization result for Borel maps in $\CAT(0)$ spaces in terms of optimal transport maps and measure preserving maps.
	\end{abstract}

\section{Introduction}
The Monge problem \cite{Monge} asks for a distribution of mass to be moved to another one in such a way that the average cost of transportation is minimized. This transportation rule if it exists, it is known as an optimal transport map. However an optimal transport map does not always exist. Kantorovich \cite{Kantorovich} proposed a relaxation of the problem which always guarantees an optimal (not necessarily unique) solution, referred to as an optimal transport plan. When Monge problem has a solution then so does Kantorovich problem and the two solutions essentially coincide. One can formulate both problems in general for any complete separable metric space $(X,d)$. More specifically given a cost function $c:X\times X\to(-\infty,+\infty]$ and two (probability) measures $\mu$ and $\nu$ on $X$ the Monge problem asks to find
\begin{align}
\label{eq:Monge-problem}
T^*\in\arg\min\int_Xc(x,Tx)\,d\mu(x)\quad\text{(Monge formulation)}
\end{align}
among all Borel mappings $T:X\to X$ that push-forward $\mu$ to $\nu$, denoted by $T_{\#}\mu=\nu$, that means for every continuous function $\phi$ on $X$ 
$$\int_X\phi(T(x))\,d\mu(x)=\int_X\phi(y)\,d\nu(y).$$
Analogously one can cast Kantorovich problem as a minimization problem
\begin{align}
\label{eq:Kantorovich}
\pi^*\in\arg\min\int_Xc(x,y)\,d\pi(x,y)\quad\text{(Kantorovich formulation)}
\end{align}
among all transport plans $\pi$ from $\mu$ to $\nu$, that is
$\pi(A\times X)=\mu(A)$ and $\pi(X\times B)=\nu(B)$
for all Borel sets in $A,B\in X$. Often the cost function $c(x,y)$ is taken to be $d(x,y)^2/2$.
Monge problem has been extensively studied by many authors initially in the setting of a Euclidean space. Sudakov \cite{Sudakov} was the first to show solutions of the problem as mappings from $\mathbb R^n$ to $\mathbb R^n$ by using a method of decomposition of measures. A different existence proof is due to Evans--Gangbo \cite{Evans} which employs tools from theory of partial differential equations. Another approach is provided by Cafarrelli--Feldman--McCann \cite{Cafarrelli}, who consider a more general cost function $c(x,y)=\|x-y\|^p$ for $p\geq 1$. They apply
 a change of coordinates that adapts the local geometry of the problem so that one needs only solve one dimensional transportation problems, a method  that was  independently discovered and used by Trudinger--Wang \cite{Wang} .
 This approach is used again by Feldman--McCann \cite{Feldman} to show existence and uniqueness results for the case when the underlying space is a Riemannian manifold. Previously McCann \cite{McCann} had proved similar theorems, but for compact manifolds, and as a result was able to show the polar factorization of a Borel function and its relationship to Helmholtz--Hodge decomposition of a vector field on a manifold. This generalized a theorem first established by Brenier \cite{Brenier} in the setting of a Euclidean space. Brenier also showed that the solution of the polar factorization problem coincides with that of Monge--Kantorovich problem, see Brenier's monograph \cite{Brenier91}. Later the transport problem was carried over to more general metric spaces with one of the earliest works being that of \cite[Lott--Villani]{Lott}, where they use optimal transport maps to give a notion of lower Ricci curvature for a measured length spaces. Related work in metric measure spaces can be found in \cite[Figalli--Villani]{Figalli}, \cite[Cavalleti--Huesmann ]{Huesman} and a series of papers by \cite[Sturm]{Sturm1, Sturm2, Sturm3}. 
 For an extensive treatment on the history of Monge--Kantorovich problem, its applications and generalizations we refer to \cite[Ambrosio--Gigli--Savar\'e]{Ambrosio}, \cite[Villani]{Villani} and the references therein.
 
 Following these developments we attempt in this work to obtain existence and uniqueness results for the transportation problem in the setting of a locally compact complete $\CAT(0)$ space. These spaces comprise a special class among metric spaces with bounded curvature introduced by Alexandrov \cite{Alexandrov}. Metric spaces with bounded curvature from above were popularized by Gromov \cite{Gromov} and play an important role in pure and applied mathematics, e.g. see \cite{Brid, Burago, BHV}. Seen as a generalization of Riemannian manifolds with non-positive curvature, $\CAT(0)$ spaces offer a natural setting for investigation of the transport problem. Moreover our work complements in a way the result of \cite[Bertrand]{Bertrand} about the transport problem in Alexandrov spaces with curvature bounded from below.
 For a different proof refer to a recent work by \cite[Rajala--Schultz]{Rajala}. 
 Apart from a lack of smoothness in general, the main difficulty arises from the fact that there does not appear an obvious way to disintegrate the associated Hausdorff measure $\mathscr H_d$ (where $d\geq 1$ is the Hausdorff dimension of $X$) consistent with some partition that is absolutely continuous with respect to the one dimensional Hausdorff measure $\mathscr H_1$. This form of disintegration of $\mathscr H_d$ is essential in utilizing the argument in Rademacher's Theorem, e.g. see \cite[Theorem 10.8]{Villani}, to show $\mathscr H_d$-almost everywhere (geodesic) differentiability of Lipschitz functions on $X$.
 But the unique geodesics structure of the space when allowed to enjoy certain conditions provides a favorable setting for the existence and uniqueness of optimal transport maps. More specifically we single out three conditions, two of which are more elementary and geometric in nature, while the third involves disintegration of $\mathscr H_d$. The dimension $d$ for simplicity is taken to be finite, though this doesn't need to be so. The first property, the local geodesic extensions, requires that in a neighborhood of each $x\in X$ any geodesic $\gamma$ containing $x$ can be extended in both directions at least incrementally. It turns out that this property when it holds everywhere is equivalent to the usual notion of geodesic extension, meaning that any geodesic can be extended indefinitely in both directions.
The geodesic extensions plays an important role in obtaining a Fermat theorem for characterization of local extrema of geodesically differentiable functions. 
The second condition demands that for every $x\in X$ there must exist some geodesic ball $B(x,r)$ for some $r>0$ such that 
the Alexandrov angle between any two distinct geodesics issuing from $x$ is strictly positive whenever the two geodesics are not subsets of one another and they are entirely contained in $B(x,r)$. We refer to this condition as the positive angles property. Though one can in principle construct $\CAT(0)$ spaces where this property fails to hold anywhere, most of the useful spaces enjoy positive angles property. The positive angles property guarantees the injectivity of the geodesic derivative $D_xc(x,\cdot\,;\gamma)$, a condition often faced in dynamical systems referred to as a twist condition.
The third property requires some regularity of the Hausdorff measure, which if satisfied, permits this measure to have a disintegration consistent with a certain partition that is absolutely continuous with respect to $\mathscr H_1$. We refer to this property as the disintegration regularity condition. It is important to say that all conditions need only hold $\mathscr H_d$-almost everywhere.
Throughout we work with the cost function $c(x,y)=d(x,y)^2/2$.

The rest of the paper develops as follows. In Section \ref{s:preliminaries} we present some preliminary definitions and results about $\CAT(0)$ spaces, measure theory, cyclic monotonicity and convexity, Hausdorff outer measure and dimension, and differentiability along geodesics of real valued functions. In Section \ref{s:3-conditions} we discuss the three conditions that guarantee existence and uniqueness result for the optimal transport map. The local geodesic extensions is satisfied $\mathscr H_d$-a.e. by any locally compact complete $\CAT(0)$ space (Theorem \ref{th:a.e.-extension}). A particular attention is given in \S \ref{ss:disintegration}, where the concepts of the radial projection and disintegration regularity are introduced. We show that if the space $(X,d)$ enjoys the positive angles property and additionally it satisfies the disintegration regularity then on every closed ball the corresponding Hausdorff measure admits a unique disintegration consistent with the partition induced by the maximal geodesics in the ball that is absolutely continuous with respect to $\mathscr H_1$ (Thereom \ref{th:H1}). Next we discuss a relationship between the Hausdorff measure of a given set $S\subseteq X$ and the positive angles property (Theorem \ref{th:boundary}). In both of these results we make use of a recent generalization \cite{Esmayli,Piotr} to metric spaces of the well known Eilenberg's inequality. In Section \ref{s:basic-lemmas} we present two fundamental lemmas (Lemma \ref{l:min-max}, Lemma \ref{l:twist}) for accomplishing the main result. Lemma \ref{l:min-max} is a type of Fermat theorem for geodesic spaces satisfying the geodesic extensions property, while Lemma \ref{l:twist} is exclusively dependent on the non-positive curvature of the space and ensures that the injectivity of $D_x(x,\cdot;\gamma)$ holds. In Section \ref{s:main} we present our main result (Theorem \ref{th:1}), where we show that in a $\CAT(0)$ space satisfying the positive angles and the disintegration regularity property $\mathscr H_d$-almost everywhere, given two probability measures $\mu,\nu$ with  $\mu$ absolutely continuous, there exists a unique optimal transport plan $\pi$ from $\mu$ to $\nu$. Moreover this map can be expressed as $\pi=(\Id, T_{\#})\mu$ for a Borel measurable map $T:X\to X$ that $\mu$-almost everywhere is unique.
 As a consequence we obtain existence and uniqueness of optimal transport map for the case when $X$ is a Riemannian manifold with non-positive sectional curvature (Theorem \ref{th:manifold}) and for the case when $X$ is a $\CAT(0)$ Euclidean polyhedral complex (Theorem \ref{th:poly-complex}). While the positive angles condition is not difficult to prove in these instances, the disintegration regularity is more involved and it makes use of a classical result of Federer \cite{Federer}. For the case of Riemannian manifolds the map $T$ takes an explicit form given by $T(x)=\exp_x\widetilde\nabla\psi(x)$ where $\exp_x:\mathscr T_x\mathscr M\to\mathscr M$ is the exponential map and $\psi$ is a $c$-convex function. Here $\widetilde{\nabla}\psi(x)$ denotes the $\mu$-approximate gradient of $\psi$ at $x$. We end with Section \ref{s:polar} where we present a polar factorization theorem (Theorem \ref{th:polarization}) for Borel measurable maps in $X$. With the help of Lemma \ref{l:invertible} it is shown that every Borel measurable map $s:X\to X$ and probability measure $\mu$ such that the push-forward of $s_{\#}\mu$ is absolutely continuous, factors uniquely $\mu$-almost everywhere into $T\circ u$ where $T$ is the unique transport map from $\mu$ to $s_{\#}\mu$ and $u$ is a measure preserving map of $\mu$, i.e. $u_{\#}\mu=\mu$.

\section{Preliminary definitions and results}
\label{s:preliminaries}
\subsection{Geometry of $\CAT(0)$ spaces}
Let $(X,d)$ be a metric space.  A curve $\gamma:[0,1]\to X$ is a constant speed geodesic if $d(\gamma(s),\gamma(t))=|s-t|\,d(\gamma(0),\gamma(1))$ for all $s,t\in[0,1]$. We denote by $\ell(\gamma)=d(\gamma(0),\gamma(1))$ the length of the geodesic $\gamma$. The metric space $(X,d)$ is a (uniquely) geodesic space if any two elements $x,y\in X$ can be connected by a (unique) geodesic in $X$. Often a geodesic segment between two elements $x,y\in X$ is denoted by $[x,y]$.
Given $x\in X$ and $r>0$ we let $B(x,r)=\{y\in X\;:\;d(x,y)<r\}$ and $B[x,r]=\{y\in X\;:\;d(x,y)\leq r\}$ denote the open and closed geodesic ball respectively. The space $(X,d)$ is locally compact if closed balls are compact. A geodesic triangle $\Delta(x,y,z)$ with vertices $x,y,z\in X$ is the union of three geodesic segments $[x,y], [y,z]$ and $[z,x]$.  A comparison triangle $\Delta(\overline{x},\overline{y},\overline{z})$ in $\mathbb R^2$ for $\Delta(x,y,z)$ is a triangle determined by three vertices $\overline{x},\overline{y},\overline{z}\in\mathbb R^2$ such that $d(x,y)=|\overline x\,\overline y|, d(y,z)=|\overline y\,\overline z|$ and $d(z,x)=|\overline z\,\overline x|$. A point $\overline w\in[\overline x,\overline y]$ is a comparison point for $w\in[x,y]$ if $d(x,w)=|\overline x\,\overline w|$. A geodesic metric space $(X,d)$ is a $\CAT(0)$ space if for every geodesic triangle $\Delta(x,y,z)$ and for any $p\in[x,y], q\in[x,y]$ the inequality holds
\begin{equation}
\label{eq:CAT-ineq}
d(p,q)\leq |\overline p\,\overline q|.
\end{equation}
An important consequence of \eqref{eq:CAT-ineq} is that any two distinct points are connected by a unique geodesic. Let $\Gamma_x(X)=\{\gamma-\text{geodesic}\,:x\in\gamma\}$ and $\Gamma(X)=\bigcup_{x\in X}\Gamma_x(X)$. We denote by $\Gamma^0_x(X)=\{\gamma-\text{geodesic}\,:\gamma(0)=x\}$. Note that $\Gamma^0_x(X)\subseteq \Gamma_x(X)$. In general given a connected set $S\subseteq X$ we define $\Gamma_x(S)=\{\gamma-\text{geodesic in}\,S\,:x\in\gamma\}$.
By $x_t:=(1-t)x\oplus ty$ we denote the unique element on $[x,y]$ satisfying $d(x_t,x)=td(x,y)$. We refer to it as the convex combination of $x$ and $y$ with parameter $t\in[0,1]$.
The $\CAT(0)$ inequality \eqref{eq:CAT-ineq} is equivalent to
\begin{equation}
\label{eq:quadratic}
d(x_t,z)^2\leq (1-t)\,d(x,z)^2+t\,d(y,z)^2-t(1-t)\,d(x,y)^2,\quad \forall z\in X.
\end{equation} 
Inequalities \eqref{eq:CAT-ineq} and \eqref{eq:quadratic} characterize the non-positive curvature of a $\CAT(0)$ space.
A set $C\subseteq X$ is convex if for any $x,y\in C$ the segment $[x,y]$ is entirely contained in $C$. 
For a set $C\subseteq X$ and any $x\in X$ we let $d(x,C)=\inf\{y\in C\,:\,d(x,y)\}$ and $P_Cx=\{y\in C\,:\,d(x,y)=d(x,C)\}$ (the metric projection onto $C$).

Given $\gamma,\eta\in\Gamma_x(X)$ with $\gamma(0)=\eta(0)=x$ and $0<s,t<1$ consider the comparison triangle $\Delta(\overline x,\overline\gamma(s),\overline\eta(t))$. Let $\overline\alpha_{\overline x}(\overline\gamma(s),\overline\eta(t))=\angle(\overline\gamma(s)\,\overline x\,\overline\eta(t))$ be the angle at $\overline x$ between the line segments $[\overline x,\overline\gamma(s)]$ and $[\overline x,\overline\eta(t)]$. The Alexandrov's (upper) angle at $x$ between the geodesics $\gamma$ and $\eta$ is defined as the unique number in $[0,\pi]$ given by
\begin{equation}
\label{eq:Alexandrov's angle}
\alpha_x(\gamma,\eta)=\limsup_{s,t\to 0}\overline\alpha_{\overline x}(\overline\gamma(s),\overline\eta(t)).
\end{equation}
Because the functions $s\mapsto \overline\alpha_{\overline x}(\overline\gamma(s),\overline\eta(t))$ and $t\mapsto \overline\alpha_{\overline x}(\overline\gamma(s),\overline\eta(t))$ are non-decreasing, due to non-positive curvature of the space, the limit superior in \eqref{eq:Alexandrov's angle} could be replaced the simpler limit $\alpha_{x}(\gamma,\eta)=\lim_{s\to 0^+}\overline\alpha_{\overline x}(\overline\gamma(s),\overline\eta(s))$, see e.g. \cite[\S 1.10-1.17]{Brid}. 
The Alexandrov angle defines a metric on the equivalence classes of geodesics emanating from a point in space. More specifically $\gamma,\eta\in\Gamma^0_x(X)$ are equivalent whenever $\alpha_x(\gamma,\eta)=0$. To each equivalence class we associate a geodesic direction $v_{\gamma}$. We let $\Sigma_x$ denote the space of directions at $x\in X$, which is the completion of the set of all geodesic directions at $x$ equipped with the Alexandrov angle metric $d_{\Sigma}(v_{\gamma},v_{\eta})=\alpha_x(\gamma,\eta)$. We denote by $T_xX=[0,+\infty)\times\Sigma_x/\{0\}\times\Sigma_x$ the tangent space at $x$ of $X$ with the metric $d_T((t,v_{\gamma}),(s,v_{\eta}))^2=t^2+s^2-2ts\cos(d_{\Sigma}(v_{\gamma},v_{\eta}))$.  
To this end we restrict ourselves to locally compact and complete $\CAT(0)$ spaces. By a standard result in analysis such spaces are always separable, thus Polish spaces. 

The following lemma collects some properties of projections onto closed convex sets:

\begin{lemma}\cite[Theorem 2.1.12]{Bacak}
	\label{l:projections}
	Let $C\subseteq X$ be a closed convex set. Then the followings are true:
	\begin{enumerate}[(i)]
		\item $P_Cx$ exists and it is unique for any $x\in X$. Moreover the following inequality is satisfied:
		\begin{equation}
		\label{eq:projection-inequality}
		d(x,P_Cx)^2+d(y,P_Cx)^2\leq d(x,y)^2,\quad \forall y\in C.
		\end{equation}
		\item For any $x'\in[x,P_Cx]$ it holds that $P_Cx'=P_Cx$.
		\item $\alpha_{P_Cx}([P_Cx,x],[P_Cx,y])\geq\pi/2\quad\text{for every}\;y\in C$.
	\end{enumerate}

\end{lemma}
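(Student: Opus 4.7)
The plan is to deduce all three parts from the quadratic $\CAT(0)$ inequality \eqref{eq:quadratic} together with the convexity of $C$, handling (i), then (ii), then (iii) in that order so that the later parts may invoke the earlier ones.

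For part (i), I would first obtain existence by taking a minimizing sequence $(y_n)\subseteq C$ with $d(x,y_n)\to d(x,C)$ and applying \eqref{eq:quadratic} to the midpoint $z_{n,m}=\tfrac12 y_n\oplus\tfrac12 y_m\in C$ (convexity). This gives
\[
d(x,z_{n,m})^2\leq \tfrac12 d(x,y_n)^2+\tfrac12 d(x,y_m)^2-\tfrac14 d(y_n,y_m)^2.
\]
Since $d(x,z_{n,m})\geq d(x,C)$, the left-hand side is bounded below, forcing $d(y_n,y_m)\to 0$, so $(y_n)$ is Cauchy. Completeness of $X$ and closedness of $C$ then supply a limit $p=P_Cx\in C$. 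The same midpoint trick applied to any two minimizers yields uniqueness. For the inequality \eqref{eq:projection-inequality}, I would fix $y\in C$, set $y_t=(1-t)p\oplus ty\in C$, apply \eqref{eq:quadratic} with $z=x$, and use $d(x,y_t)\geq d(x,p)$ to obtain
\[
d(x,p)^2\leq d(x,y)^2-(1-t)d(p,y)^2,
\]
after which letting $t\to 0^+$ produces the claimed inequality.

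For part (ii), since $x'\in[x,P_Cx]$ we have the additive identity $d(x,p)=d(x,x')+d(x',p)$. For an arbitrary $y\in C$, the minimality of $p$ and the triangle inequality give
\[
d(x,x')+d(x',p)=d(x,p)\leq d(x,y)\leq d(x,x')+d(x',y),
\]
so $d(x',p)\leq d(x',y)$ for every $y\in C$; the uniqueness clause from (i) then forces $P_Cx'=p$.

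For part (iii), I would combine (i) and (ii). Pick $s,t\in(0,1)$ and set $x_s=(1-s)p\oplus sx$ and $y_t=(1-t)p\oplus ty$. Convexity gives $y_t\in C$, and (ii) yields $P_Cx_s=p$. Applying \eqref{eq:projection-inequality} with $x_s$ in the role of $x$ and $y_t$ in the role of $y$, and using $d(x_s,p)=s\,d(x,p)$, $d(y_t,p)=t\,d(y,p)$, gives
\[
s^2 d(x,p)^2+t^2 d(y,p)^2\leq d(x_s,y_t)^2.
\]
In the Euclidean comparison triangle $\Delta(\bar p,\bar x_s,\bar y_t)$ the law of cosines reads
\[
d(x_s,y_t)^2=s^2 d(x,p)^2+t^2 d(y,p)^2-2st\,d(x,p)\,d(y,p)\cos\bar\alpha_{s,t},
\]
so the previous inequality forces $\cos\bar\alpha_{s,t}\leq 0$, i.e.\ $\bar\alpha_{s,t}\geq\pi/2$. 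Taking the $s,t\to 0^+$ limit in \eqref{eq:Alexandrov's angle} delivers $\alpha_{p}([p,x],[p,y])\geq\pi/2$. There is no serious obstacle here; the only delicate organizational point is to recognize that (ii) is what lets one slide $x$ in along $[p,x]$ without changing its projection, which is exactly the ingredient that converts the endpoint inequality from (i) into a statement about tangent-level angles.
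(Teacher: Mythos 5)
Your argument is correct and is essentially the standard proof of this projection theorem; the paper itself offers no proof, citing Bacak's Theorem 2.1.12, whose argument proceeds exactly as yours does (minimizing sequence plus the midpoint/quadratic inequality for existence, uniqueness and \eqref{eq:projection-inequality}, then sliding along $[x,P_Cx]$ and the Euclidean law of cosines in the comparison triangle for the angle bound). The only cosmetic caveat is to note that in (iii) one tacitly assumes $x\notin C$ and $y\neq P_Cx$ so that the geodesics $[P_Cx,x]$ and $[P_Cx,y]$ are nondegenerate and the division by $d(x,P_Cx)\,d(y,P_Cx)$ in the cosine step is legitimate.
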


\subsection{Measure theory} Let $(X,d)$ be a metric space and $\mathscr B(X)$ its Borel $\sigma$-algebra. A function (mapping) $f:X\to X$ is measurable if $f^{-1}(B)\in\mathscr B(X)$ whenever $B\in\mathscr B(X)$.
 Given two measures $\mu,\nu:\mathscr B(X)\to[0,+\infty]$ we say $\nu$ is absolutely continuous with respect to $\mu$ and we denote it by $\nu\ll\mu$, if $\mu(B)=0$ implies $\nu(B)=0$ for every $B\in\mathscr B(X)$. By the classical Radon--Nikodym Theorem this is equivalent to existence of a measurable function $f:X\to[0,+\infty)$ such that $\nu(B)=\int_Bf(x)\,d\mu(x)$ for all $B\in\mathscr B(X)$.
The triple $(X,\mathscr B(X),\mu)$ is referred to as a measure space. A measure space $(X,\mathscr B(X),\mu)$ is $\sigma$-finite if there exists countably many measurable sets $X_k$ such that $X=\bigcup_{k\in\mathbb N}X_k$ and $\mu(X_k)<+\infty$ for every $k\in\mathbb N$.
Let $$\mathscr P_2(X):=\{\mu:\mathscr B(X)\to[0,1]\,:\,\int_Xd(x_0,x)^2\,d\mu(x)<+\infty,\;\mu(X)=1\}$$ be the set of all probability measures $\mu$ on $X$ having finite second moment\footnote{Here $x_0\in X$ is some arbitrary but fixed element.}. Given a measurable mapping $f:X\to X$ we let $f_{\#}\mu$ denote the pushforward of the measure $\mu\in\mathscr P_2(X)$ under $f$, that is $(f_{\#}\mu)(B)=\mu( f^{-1}(B))$ for all $B\in\mathscr B(X)$. 

\subsection{Cyclic monotonicity and convexity} We follow terminology in \cite[\S 1.5]{Villani}. Let $c:X\times X\to(-\infty,+\infty]$ be a cost function. A set $\Gamma\subseteq X\times X$ is $c$-cyclically monotone if for any $n\in\mathbb N$ and any set of points $(x_1,y_1),\cdots,(x_n,y_n)$ in $\Gamma$ the inequality holds $\sum_{i=1}^nc(x_i,y_i)\leq\sum_{i=1}^nc(x_i,y_{i+1})$ with the convention that $y_{n+1}=y_1$. A transport plan $\pi$ is said to be $c$-cyclically monotone if it is concentrated on a $c$-cyclically monotone set $\Gamma\subseteq X\times X$, i.e. $\pi(\Gamma)=1$.
A function $\psi:X\to(-\infty,+\infty]$ is $c$-convex if $\psi\not\equiv+\infty$ and there is $\zeta:X\to[-\infty,+\infty]$ such that $\psi(x)=\sup_{y\in X}(\zeta(y)-c(x,y))$ for all $x\in X$. The $c$-transform of $\psi$ is defined as $\psi^c(y)=\inf_{x\in X}(\psi(x)+c(x,y))$ for all $y\in X$ and the $c$-subdifferential of $\psi$ is the $c$-cyclically monotone set $\partial _c\psi=\{(x,y)\in X\times X\,:\,\psi^c(y)-\psi(x)=c(x,y)\}$. Similarly the $c$-subdifferential of $\psi$ at a given $x\in X$ is given by $\partial_c\psi(x)=\{y\in X\,:\,(x,y)\in\partial_c\psi\}$.
\begin{lemma} \cite[Theorem 5.10]{Villani}
	\label{l:Kantorovich} Let $\mu,\nu\in\mathscr P_2(X)$ and $\pi\in\Pi(\mu,\nu)$. Then there exists a measurable $c$-cyclically monotone closed set $\Gamma\subseteq X\times X$ such that the followings are equivalent:
	\begin{enumerate}[(i)]
		\item $\pi$ is optimal.
		\item $\pi$ is $c$-cyclically monotone.
		\item There is a $c$-convex function $\psi$, such that $$\pi(\{(x,y)\in X\times X\,:\,\psi^c(y)-\psi(x)=c(x,y)\})=1.$$
		\item There exist functions $\psi:X\to(-\infty,+\infty]$ and $\phi:X\to[-\infty,+\infty)$, such that $\phi(y)-\psi(x)\leq c(x,y)$ for $x,y$ and equality $\pi$-a.e..
		\item $\pi$ is concentrated on $\Gamma$.
	\end{enumerate}
\end{lemma}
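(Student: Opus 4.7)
The plan is to prove a cyclic chain of implications $(i)\Rightarrow(v)\Rightarrow(ii)\Rightarrow(iii)\Rightarrow(iv)\Rightarrow(i)$, which is the standard route in Villani's treatment. The heart of the argument lies in the equivalence between optimality and $c$-cyclic monotonicity, and in producing a dual potential via Rockafellar-type construction; the remaining implications are either trivial reformulations or duality bookkeeping. Throughout I assume (as in Kantorovich's setting) that $c$ is lower semicontinuous and bounded below by $-a(x)-b(y)$ for $a\in L^1(\mu)$, $b\in L^1(\nu)$, which holds for $c(x,y)=d(x,y)^2/2$ on $\mathscr P_2(X)$.

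For $(i)\Rightarrow(ii)$ I would use the classical swapping argument. Suppose $\pi$ is optimal but fails $c$-cyclic monotonicity on every set of full measure. Then there exist $n\geq 2$ and points $(x_i,y_i)$, $i=1,\dots,n$, chosen from small product neighborhoods $U_i\times V_i$ inside $\supp\pi$, such that the cost strictly decreases after the cyclic permutation $y_i\mapsto y_{i+1}$. Disintegrating $\pi$ on these neighborhoods and constructing a perturbation that subtracts a small Dirac-like contribution from each $U_i\times V_i$ and adds it instead to $U_i\times V_{i+1}$, one obtains a new transport plan $\tilde\pi\in\Pi(\mu,\nu)$ with strictly smaller cost, contradicting optimality. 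The measurable selection theorem applied to $\supp\pi$ makes the choice of $(x_i,y_i)$ rigorous. Taking $\Gamma$ to be the closure of $\supp\pi$ then yields $(v)$ by lower semicontinuity of $c$ (which preserves cyclic monotonicity under closure).

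For $(ii)\Rightarrow(iii)$, I would implement Rockafellar's construction: fix $(x_0,y_0)\in\Gamma$ and define
\begin{equation*}
\psi(x)=\sup\bigl\{[c(x_0,y_0)-c(x_1,y_0)]+[c(x_1,y_1)-c(x_2,y_1)]+\dots+[c(x_n,y_n)-c(x,y_n)]\bigr\},
\end{equation*}
the supremum taken over $n\in\mathbb N$ and $(x_i,y_i)\in\Gamma$. Then $\psi$ is $c$-convex by construction because each finite-sum term can be written as $\zeta(y_n)-c(x,y_n)$ for a suitable $\zeta$. The cyclic monotonicity of $\Gamma$ guarantees $\psi(x_0)\in[0,\infty)$, hence $\psi\not\equiv+\infty$. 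A direct computation using $(x,y)\in\Gamma$ and the cycle $(x_0,y_0),\dots,(x_n,y_n),(x,y)$ yields $\psi^c(y)-\psi(x)=c(x,y)$ on $\Gamma$; combined with $\pi(\Gamma)=1$ this gives $(iii)$. The implication $(iii)\Rightarrow(iv)$ is immediate by taking $\phi=\psi^c$ and using $\phi(y)-\psi(x)\leq c(x,y)$ always (by definition of $\psi^c$) with equality on $\supp\pi$.

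Finally, $(iv)\Rightarrow(i)$ is Kantorovich duality: for any competitor $\tilde\pi\in\Pi(\mu,\nu)$,
\begin{equation*}
\int c\,d\tilde\pi\geq\int(\phi(y)-\psi(x))\,d\tilde\pi=\int\phi\,d\nu-\int\psi\,d\mu,
\end{equation*}
and for our $\pi$ this lower bound is attained thanks to the $\pi$-a.e.\ equality, so $\pi$ minimizes. The technical obstacles are primarily measure-theoretic rather than conceptual: making the swapping argument rigorous requires a careful measurable selection inside $\supp\pi$ together with a local disintegration of $\pi$, and one must verify that the supremum defining $\psi$ produces a Borel measurable (indeed $c$-convex hence Borel) function; producing a closed $\Gamma$ with $\pi(\Gamma)=1$ on which cyclic monotonicity survives also relies on lower semicontinuity of $c$. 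None of these use features particular to $\CAT(0)$ spaces, which is why the lemma holds in the generality stated.
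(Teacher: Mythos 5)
The paper offers no proof of this lemma at all: it is quoted directly from Villani's Theorem 5.10, so there is no internal argument to compare yours against. Your sketch follows the standard route of that reference (the neighborhood-swapping perturbation for optimality $\Rightarrow$ $c$-cyclic monotonicity, Rockafellar's chain construction for the $c$-convex potential, and the duality computation for the converse), and in outline this is the right proof; for the paper's cost $c=d^2/2$ with $\mu,\nu\in\mathscr P_2(X)$ the continuity and moment bounds you invoke are indeed available.

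There is, however, one genuine weak point: your construction of the set $\Gamma$. Taking $\Gamma$ to be the closure of $\supp\pi$ (which is just $\supp\pi$) makes (v) hold trivially for the given $\pi$, so it cannot be the set whose full measure characterizes optimality: if $\pi$ happens not to be optimal, your $\Gamma$ still carries all of $\pi$ yet need not be $c$-cyclically monotone, and the asserted equivalence collapses. The set the statement (and the rest of the paper, e.g.\ the proof of Lemma \ref{l:invertible}) has in mind is $\Gamma=\partial_c\psi=\{(x,y):\psi^c(y)-\psi(x)=c(x,y)\}$ for a fixed dual potential $\psi$ built (via your Rockafellar step) from \emph{some} optimal plan, which exists by Remark \ref{r:c-convex}. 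This $\Gamma$ is $c$-cyclically monotone, closed for continuous $c$ with an appropriate choice of $\psi$, independent of the particular $\pi$, and the two missing implications are then: any plan concentrated on $\Gamma$ is optimal (your (iv)$\Rightarrow$(i) computation), and any optimal plan saturates the duality inequality $\psi^c(y)-\psi(x)\le c(x,y)$ $\pi$-a.e., hence is concentrated on $\Gamma$. Relatedly, your parenthetical claim that lower semicontinuity of $c$ ``preserves cyclic monotonicity under closure'' is backwards: passing the defining inequalities to limit points requires upper semicontinuity in the permuted terms, i.e.\ essentially continuity of $c$; this is harmless here because $d^2/2$ is continuous (and the same continuity is what makes the swapping argument in (i)$\Rightarrow$(ii) work), but the justification should be stated correctly.
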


\begin{remark}
\label{r:c-convex}
The condition in \cite[Theorem 5.10 (ii)]{Villani} $$\inf_{\pi\in\Pi(\mu,\nu)}\int_{X\times X}c(x,y)\,d\pi(x,y)<+\infty$$ is satisfied for $c(x,y)=d(x,y)^2/2$. Moreover by \cite[Theorem 4.1]{Villani}, since $c(x,y)=d(x,y)^2/2$ is lower semicontinuous and bounded below (uniformly), there exists an optimal transport plan $\pi\in\Pi(\mu,\nu)$.
\end{remark}

\subsection{Hausdorff outer measure and dimension}
\label{ss:Hausdorff}
For a set $U\subseteq X$ define its diameter $\diam(U)=\sup\{d(x,y)\,:\,x,y\in U\}$. By convention $\diam(\emptyset)=0$.
Given $d>0$ and $S\subseteq X$ let 
\begin{equation}
\label{eq:Hausdorff}
H^d_{\delta}(S)=\inf\{\sum_{i\in\mathbb N}\diam^d(U_i)\,:\,S\subseteq\bigcup_{i\in\mathbb N}U_i,\,\diam(U_i)<\delta\}.
\end{equation}  
The $d$-dimensional Hausdorff outer measure of $S$ is defined as $\mathscr H_{d}(S):=\lim_{\delta\to 0}H^{d}_{\delta}(S)$. By Carath\'eodory's extension theorem \cite[Theorem 10.23]{Aliprantis} it determines a measure on $\mathscr B(X)$. The Hausdorff dimension $d=\dim_HS$ of $S$ is defined as $d=\inf\{d\,'\geq 0\,:\,\mathscr H_{d\,'}(S)=0\}$.
Note that in $\mathbb R^d$ the Hausdorff measure $\mathscr H_{d}$ coincides with the Lebesgue measure $\lambda$ up to a normalization constant dependent on the dimension $d$. A set $S\subset X$ is $\mathscr H_{d}$-negligible, of zero volume or simply a null set in $X$ whenever $\mathscr H_{d}(S)=0$. If $\mathscr H_d(S)>0$ then we refer to $S$ as a set of positive volume. 
An element $\mu\in\mathscr P_2(X)$ is absolutely continuous if $\mu\ll\mathscr H_{d}$.

\begin{lemma}\cite[Theorem 2]{Schleicher}
	\label{l:Hausdorff}
	The Hausdorff dimension satisfies the following properties:
	\begin{enumerate}[(i)]
		\item If $X\subset Y$ then $\dim_HX\leq \dim_HY$;
		\item If $X=\bigcup_{i\in\mathbb N}X_i$ with $\dim_HX_i\leq d$ for all $i\in\mathbb N$, then $\dim_HX\leq d$;
		\item If $X$ is countable, then $\dim_HX=0$;
		\item If $X\subset\mathbb R^d$, then $\dim_HX\leq d$;
		\item If $\dim_HX=d_1$ and $\dim_HY=d_2$, then $\dim_H(X\times Y)\geq d_1+d_2$;
		\item If $X$ is connected and contains more than one point, then $\dim_HX\geq 1$;
		\item If $f:X\to f(X)$ is a Lipschitz mapping, then $\dim_Hf(X)\leq\dim_HX$.
	\end{enumerate}
\end{lemma}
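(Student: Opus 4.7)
The plan is to reduce each property to a statement about the Hausdorff outer measure $\mathscr H_d$ and argue directly from the definition in \eqref{eq:Hausdorff}. Items (i)--(iii) are essentially formal, and I would dispatch them first: any $\delta$-cover of $Y$ restricts to a $\delta$-cover of $X$, so $H^d_\delta(X)\leq H^d_\delta(Y)$ and thus $\dim_HX\leq\dim_HY$; countable subadditivity of the outer measure yields (ii), since $\mathscr H_{d\,'}(X_i)=0$ for every $d\,'>d$ forces $\mathscr H_{d\,'}(X)=0$; and (iii) follows from (ii) together with the observation that a singleton has $\mathscr H_{d}=0$ for every $d>0$.

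Next I would prove (vii) directly from the definition: an $L$-Lipschitz map satisfies $\diam(f(U))\leq L\diam(U)$, so a $\delta$-cover of $X$ pushes forward to an $L\delta$-cover of $f(X)$, giving $H^d_{L\delta}(f(X))\leq L^d H^d_\delta(X)$ and therefore $\mathscr H_d(f(X))\leq L^d\mathscr H_d(X)$. With (vii) in hand, (vi) follows immediately: if $x\neq y$ in a connected $X$, the $1$-Lipschitz function $z\mapsto d(x,z)$ has connected image containing both $0$ and $d(x,y)$, hence the full interval $[0,d(x,y)]$, whose Hausdorff dimension equals $1$, so $\dim_HX\geq 1$. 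For (iv) I would cover $X\cap B(0,n)\subset\mathbb R^d$ by axis-parallel cubes of side $\delta$, of which at most $O((n/\delta)^d)$ are needed, and estimate $H^{d\,'}_{c\delta}(X\cap B(0,n))\leq Cn^d\delta^{d\,'-d}\to 0$ whenever $d\,'>d$; applying (ii) to the exhaustion $X=\bigcup_n(X\cap B(0,n))$ finishes that case.

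The main obstacle will be (v), which is not a formal consequence of the definition. My strategy is the mass-distribution (Frostman) approach: fix $s<d_1$ and $t<d_2$, reduce via (ii) to compact subsets of positive $s$- and $t$-dimensional Hausdorff measure, and produce nonzero finite Borel measures $\mu$ on $X$ and $\nu$ on $Y$ satisfying $\mu(B(x,r))\leq Cr^s$ and $\nu(B(y,r))\leq Cr^t$. Equipping the product with the $\ell^\infty$ metric gives $B((x,y),r)=B(x,r)\times B(y,r)$ and hence $(\mu\otimes\nu)(B((x,y),r))\leq C^2r^{s+t}$; the mass-distribution principle then forces $\dim_H(X\times Y)\geq s+t$, and letting $s\uparrow d_1$ and $t\uparrow d_2$ closes the argument. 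The delicate point is justifying Frostman's lemma in a general complete separable metric setting rather than in $\mathbb R^n$; I would handle this by restricting further to a compact subspace, where a standard dyadic-type covering construction yields the required measure with controlled growth.
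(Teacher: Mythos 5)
The paper does not actually prove this lemma: it is quoted verbatim from \cite{Schleicher} and used as a black box, so any proof you give is by construction a different route. Your arguments for (i)--(iv), (vi) and (vii) are the standard ones and are complete: monotonicity and countable subadditivity of $\mathscr H_{d'}$ give (i)--(iii), the cube-counting estimate gives (iv), the Lipschitz image estimate $H^{d}_{L\delta}(f(X))\leq L^{d}H^{d}_{\delta}(X)$ gives (vii), and combining (vii) with the $1$-Lipschitz map $z\mapsto d(x,z)$, whose image contains the interval $[0,d(x,y)]$, gives (vi). This is exactly the level of argument the cited source has in mind, and it buys the paper a self-contained statement rather than an external reference.

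The one place where your plan is thinner than you acknowledge is (v). The mass-distribution strategy is indeed the standard proof, and the $\ell^\infty$ product metric computation and the final application of the mass-distribution principle are fine (any bi-Lipschitz equivalent product metric gives the same dimension by (vii)). But the reduction you describe is not a consequence of (ii): producing a compact subset of positive (let alone finite positive) $\mathscr H_s$-measure, and then a nonzero measure with $\mu(B(x,r))\leq Cr^{s}$, is precisely Frostman's lemma, and in a general metric space this is not obtained by the ``standard dyadic-type covering construction'' of $\mathbb R^n$: general compact metric spaces carry no dyadic cubes and need not be doubling. The correct patch is to invoke (or reprove) the Howroyd--Frostman theorem for analytic subsets of complete separable metric spaces --- which is exactly the setting the paper works in, since its spaces are locally compact, complete and separable --- or, if you insist on a hands-on construction, to build the measure on a totally bounded compact piece via a hierarchical system of finite ball covers together with a min-cut/weighting argument; either way this step is a genuine theorem, not a routine covering estimate, and should be stated as such. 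With that reference supplied, your proof is complete and, unlike the paper, self-contained; note also that the paper never actually uses item (v), so for the paper's purposes the easy items (i)--(iv), (vi), (vii) already suffice.
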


\begin{corollary}
	\label{c:Hausdorff}
	Let $(X,d)$ be a $\CAT(0)$ space, then $\dim_HX\geq 1$. If $\dim_HX=d$ and $S\subseteq X$ with $\dim_HS=d_S$ satisfies $\mathscr H_{d_S}(S)<+\infty$ then $d_S=d$ whenever $\mathscr H_d(S)>0$.
\end{corollary}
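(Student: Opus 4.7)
The plan is to handle the two assertions of the corollary separately, each as a direct application of Lemma \ref{l:Hausdorff}. For $\dim_H X \geq 1$, I would observe that a $\CAT(0)$ space is by definition geodesic, so any two of its points are joined by a continuous geodesic segment; hence $X$ is path-connected, and in particular connected. Assuming that $X$ contains more than one point (implicit in the statement, as otherwise the claim is vacuous), Lemma \ref{l:Hausdorff}(vi) immediately yields $\dim_H X \geq 1$.

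For the second assertion I would sandwich $d_S$ between $d$ and $d$. The upper bound $d_S \leq d$ is immediate from $S \subseteq X$ together with monotonicity of Hausdorff dimension under inclusion, Lemma \ref{l:Hausdorff}(i). For the reverse inequality I would go back to the definition $d_S = \inf\{d' \geq 0 : \mathscr H_{d'}(S) = 0\}$ and invoke the standard scaling comparison
\[
H^{d'}_{\delta}(S) \;\leq\; \delta^{\,d' - d''}\, H^{d''}_{\delta}(S), \qquad d' > d'' \geq 0,
\]
which follows from \eqref{eq:Hausdorff} via the pointwise bound $\diam(U_i)^{d'} \leq \delta^{d'-d''}\diam(U_i)^{d''}$ whenever $\diam(U_i) < \delta$. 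Letting $\delta \to 0$ shows that $\mathscr H_{d''}(S) = 0$ implies $\mathscr H_{d'}(S) = 0$ for every $d' > d''$, and choosing $d''$ in a minimizing sequence for $d_S$ gives $\mathscr H_{d'}(S) = 0$ for every $d' > d_S$. The hypothesis $\mathscr H_d(S) > 0$ therefore rules out $d > d_S$, so $d \leq d_S$. Combining the two inequalities produces $d_S = d$.

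I do not anticipate any real obstacle; both halves reduce to Lemma \ref{l:Hausdorff} together with the standard scaling identity for Hausdorff measures. It is worth recording that the finiteness hypothesis $\mathscr H_{d_S}(S) < +\infty$ appearing in the statement does not actually enter this sandwich argument, and is presumably included either for consistency with later uses of the corollary or to emphasize that $S$ has well-behaved volume at its critical dimension.
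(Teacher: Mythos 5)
Your proof is correct; the first half is the same as the paper's (a $\CAT(0)$ space is geodesic, hence connected, and Lemma \ref{l:Hausdorff}(vi) applies), and the upper bound $d_S\leq d$ via Lemma \ref{l:Hausdorff}(i) is also identical. Where you genuinely diverge is in how the scaling estimate $H^{d'}_{\delta}(S)\leq\delta^{\,d'-d''}H^{d''}_{\delta}(S)$ is deployed for the lower bound. The paper argues by contradiction with $d''=d_S$ and $d'=d$: assuming $d_S<d$, it lets $\delta\to 0$ and concludes $\mathscr H_d(S)=0$, a passage to the limit that works precisely because $H^{d_S}_{\delta}(S)$ stays bounded by the hypothesis $\mathscr H_{d_S}(S)<+\infty$. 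You instead apply the estimate at levels $d''$ taken from a minimizing sequence for the infimum defining $d_S$, where $\mathscr H_{d''}(S)=0$ already, so the right-hand side vanishes for every $\delta$ and you get $\mathscr H_{d'}(S)=0$ for all $d'>d_S$; the hypothesis $\mathscr H_d(S)>0$ then forces $d\leq d_S$ directly. This buys you something real: as you observe, the finiteness assumption $\mathscr H_{d_S}(S)<+\infty$ becomes superfluous in your argument, whereas the paper's contradiction argument does use it (implicitly) to make the limit $\delta^{\,d-d_S}H^{d_S}_{\delta}(S)\to 0$ legitimate. Your explicit caveat about $X$ having more than one point is also a fair (if minor) sharpening of the first claim, which the paper glosses over.
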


\begin{proof}
	The first claim follows directly from Lemma \ref{l:Hausdorff} (vi), since by definition any $\CAT(0)$ space is connected. For the second assertion let $\dim_HS=d_S$, then by Lemma \ref{l:Hausdorff} (i) $d_S\leq d$. Now suppose that $\mathscr H_d(S)>0$, but $d_S<d$, then
	$H^{d}_{\delta}(S)\leq \sum_{i\in\mathbb N}\diam^{d} (U_i)<\delta^{d-d_S}\,\sum_{i\in\mathbb N}\diam^{d_S}(U_i)$ where $S\subseteq\bigcup_{i\in\mathbb N}U_i$. This in turn yields $H^{d}_{\delta}(S)\leq \delta^{d-d_S}\,H^{d_S}_{\delta}(S)$. Taking limit as $\delta\to 0$ implies $\mathscr H_d(S)=0$ raising a contradiction.
\end{proof}

\begin{figure}[t]
	\includegraphics[width=10cm]{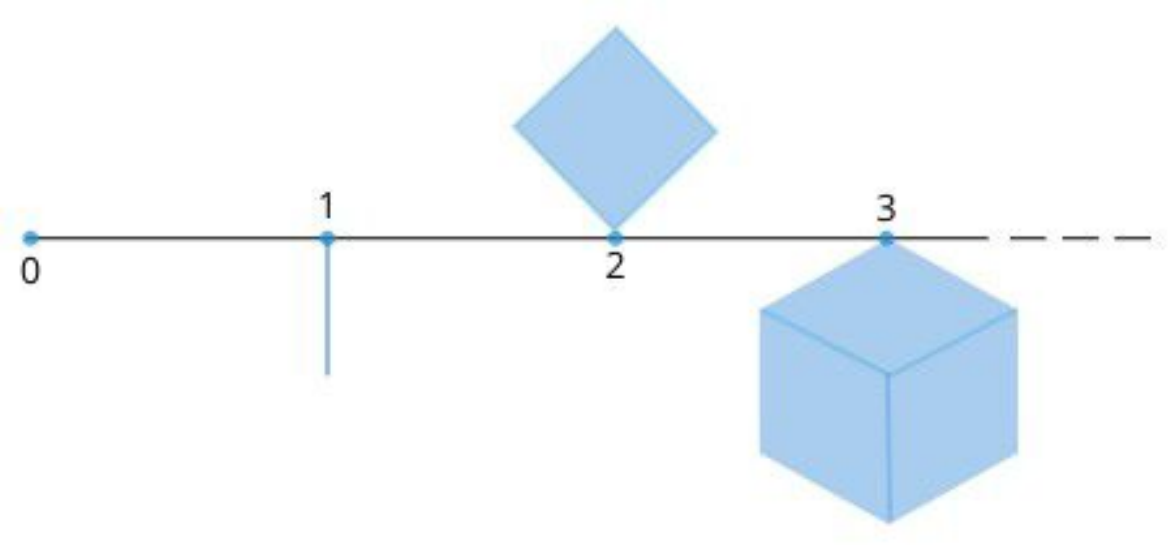}
	\caption{The space $X$ consisting of the non-negative real line $\mathbb R$ where at each point $x=n$ an $n$-dimensional solid cube is glued.}
		\label{f:line-boxes}
\end{figure}

Note that, unlike Euclidean spaces, in general a locally compact $\CAT(0)$ space may have infinite Hausdorff dimension. Consider the non-negative real line $\mathbb R_+$ where at every point $x=n$ an $n$-dimensional cube $C_n$ is glued, see Figure \ref{f:line-boxes}. Since each cube $C_n$ can be isometrically embedded in $\mathbb R^n$ then by Lemma \ref{l:Hausdorff} (iv) it follows that $\dim_HC_n=n$. Moreover again by Lemma \ref{l:Hausdorff} (i) since $C_n\subseteq X$ then $\dim_HX\geq n$ for all $n\in\mathbb N$. Also Riemannian manifolds of non-positive sectional curvature have finite Hausdorff dimension. Indeed since an $n$-dimensional manifold $\mathscr M_n$ is bi-Lipschitz diffeomorphic to $\mathbb R^n$, then Lemma \ref{l:Hausdorff} (vii) yields that $\dim_H\mathscr M_n=n$.
\begin{lemma}
	\label{l:sigma-finite}
	Let $(X,d)$ be a locally compact complete $\CAT(0)$ space with $\dim_HX<+\infty$. Let $\mathscr H_{d}$ be the Hausdorff measure on $X$. Then $(X,\mathscr B(X),\mathscr H_{d})$ is a $\sigma$-finite measure space.
\end{lemma}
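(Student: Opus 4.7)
The plan is to realize $X$ as a countable union of Borel sets on which $\mathscr H_d$ is finite, exploiting both the $\sigma$-compactness of $X$ and a local finiteness estimate for the Hausdorff measure.

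First I would fix a base point $x_0\in X$ and set $X_k:=B[x_0,k]$ for $k\in\mathbb N$. These are closed (hence Borel) and satisfy $X=\bigcup_{k\in\mathbb N}X_k$. Since $X$ is a complete locally compact (uniquely) geodesic space, the Hopf--Rinow theorem for length spaces \cite{Brid} yields that $(X,d)$ is proper, i.e.\ closed bounded sets are compact; in particular every $X_k$ is compact.

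Second, the main step is to verify $\mathscr H_d(X_k)<+\infty$ for every $k$. By compactness, for any $\varepsilon>0$ one can cover $X_k$ by finitely many closed balls $B[y_i,r_i]$ with $r_i<\varepsilon$. To control $\sum_i r_i^d$ uniformly in $\varepsilon$, I would invoke a $\CAT(0)$ volume comparison (a non-positive curvature analogue of Bishop--Gromov), giving $\mathscr H_d(B[y_i,r_i])\le C\,r_i^d$ for small $r_i$, with $C$ depending only on $d$. Summing finitely many such bounds yields $\mathscr H_d(X_k)<+\infty$. Once every $X_k$ has finite measure, the decomposition $X=\bigcup_{k\in\mathbb N}X_k$ is exactly the required $\sigma$-finiteness of $(X,\mathscr B(X),\mathscr H_d)$.

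The chief obstacle is the local finiteness estimate in the second step. The hypothesis $\dim_HX=d<+\infty$ alone is too weak: in general a compact metric space of Hausdorff dimension $d$ may still carry $\mathscr H_d=+\infty$. The non-positive curvature must therefore genuinely be used, either through a volume-comparison estimate for $\CAT(0)$ balls (bounding packing numbers in a Euclidean fashion) or, as a fallback, via a slicing argument of Eilenberg type using the properness of $X$ and radial projections from $x_0$, reducing the estimate to one on one-dimensional fibres whose measure is explicitly controllable.
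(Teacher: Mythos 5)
Your decomposition step is unproblematic and close in spirit to the paper's (the paper covers $X$ by countably many balls centred at a countable dense set; you use the concentric closed balls $B[x_0,k]$, compact by Hopf--Rinow). The genuine gap is the central claim $\mathscr H_d(B[x_0,k])<+\infty$. You correctly sense that finite Hausdorff dimension alone is insufficient, but the tool you propose to close the gap is not available: for \emph{upper} curvature bounds the volume comparison goes the other way (balls in nonpositive curvature are at least as large as Euclidean ones), so there is no estimate of the form $\mathscr H_d(B[y,r])\le C\,r^d$ with $C=C(d)$ in a locally compact complete $\CAT(0)$ space of Hausdorff dimension $d$ without further hypotheses such as geodesic completeness; and the ``Eilenberg-type slicing'' fallback only bounds the fibre integrals from above by $\mathscr H_d$ of the set --- exactly the quantity you are trying to control --- so it produces no finiteness either.

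In fact finiteness of $\mathscr H_d$ on metric balls is false in the stated generality, so no comparison inequality can rescue your second step. Glue, for each $n\ge 4$, a Euclidean square of side $n^{-1/2}$ isometrically to the segment $[0,1]$ along the subsegment $[0,n^{-1/2}]$; iterated Reshetnyak gluing shows the resulting space is $\CAT(0)$, and it is compact, complete, of Hausdorff dimension $2$, yet every ball $B[0,r]$ contains all but finitely many of the squares entirely, whence $\mathscr H_2(B[0,r])\ge\sum_{n\ge 2/r^2}1/n=+\infty$. (That space is still $\sigma$-finite, but only through a different countable decomposition --- the squares themselves --- not through balls.) So a correct argument must manufacture finite-measure pieces by some means other than bounding the measure of balls; note that the paper's own proof also rests on finiteness over small balls, via the bound $H^d_\delta(X_k)\le\diam^d X_k$, which as written controls the Hausdorff content (it is an admissible cover only when $\delta>\diam X_k$) rather than $\mathscr H_d$, so your proposal neither reproduces nor repairs that step.
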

\begin{proof}
	Since $(X,d)$ is locally compact and connected then it is separable.
	For a given $\varepsilon>0$ denote by $X_k=B(y_k,\varepsilon)$, where $\{y_k\}_{k\in\mathbb N}\subseteq X$ is a countable dense set, then $X=\bigcup_{k\in\mathbb N}X_k$. Next let $d_k=\dim_HX_k$, by Lemma \ref{l:Hausdorff} (i) it follows that $d_k\leq d$ and that in particular $\mathscr H_{d}(X_k)=0$ whenever $d_k<d$. So suppose that $d_k=d$, then for any $\delta>0$ it holds that $H^d_{\delta}(X_k)\leq \diam^dX_k<+\infty$, where finiteness is independent of $\delta$. Consequently $\mathscr H_{d}(X_k)<+\infty$ for every $k\in\mathbb N$. By definition $(X,\mathscr B(X),\mathscr H_{d})$ is a $\sigma$-finite space.
\end{proof}

\begin{proposition}
\label{p:finite-dim}
Every bounded closed convex set in a locally compact complete $\CAT(0)$ space has finite Hausdorff dimension. 
\end{proposition}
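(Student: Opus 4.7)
The plan is in two stages: first establish that $C$ is compact, and then bound its Hausdorff dimension through a localization argument. Since $C$ is bounded, $C \subseteq B[x_0, R]$ for some $x_0 \in X$ and $R > 0$; by the local compactness hypothesis (closed balls are compact), $B[x_0, R]$ is compact, and since $C$ is closed in this compact set, $C$ is itself compact. By Lemma \ref{l:Hausdorff}(i) it then suffices to prove $\dim_H B[x_0, R] < +\infty$.

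Next I would prove the following local finite-dimensionality claim: every $x \in X$ admits a radius $r_x > 0$ with $\dim_H B[x, r_x] < +\infty$. Granting this, the compactness of $B[x_0, R]$ produces a finite subcover $B[x_0, R] \subseteq \bigcup_{i=1}^{n} B[x_i, r_{x_i}]$, and Lemma \ref{l:Hausdorff}(ii) yields $\dim_H B[x_0, R] \leq \max_{i} \dim_H B[x_i, r_{x_i}] < +\infty$, finishing the proof.

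The main obstacle is verifying the local claim. My plan there is to bound the $\varepsilon$-covering number $N(B[x, r_x], \varepsilon)$ polynomially in $1/\varepsilon$, exploiting the $\CAT(0)$ structure: given a maximal $\varepsilon$-separated subset of $B[x, r_x]$, stratify the points into dyadic shells by distance to $x$ and, within each shell of radius $\simeq s$, use the fact that the comparison angles at $x$ between pairs in the shell are bounded below by $\simeq \varepsilon/s$ to count separated directions. The delicate issue is that the $\CAT(0)$ inequality only controls Alexandrov angles from above by comparison angles, so the counting must route through the compact tangent structure at $x$ (the space of directions $\Sigma_x$ and the tangent cone $T_x X$, both rendered well-behaved by the local compactness of $X$) or proceed by an inductive argument on dimension. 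This step is where one invokes the structural theory of locally compact $\CAT(0)$ spaces, and it is the only genuinely non-routine ingredient of the proof.
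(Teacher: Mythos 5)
Your reduction steps are fine as bookkeeping (boundedness plus properness gives a compact ball containing $C$, Lemma \ref{l:Hausdorff}(i) reduces to the ball, and a finite subcover plus Lemma \ref{l:Hausdorff}(ii) would pass from local to global), but they only relocate the problem: the ``local finite-dimensionality claim'' $\dim_H B[x,r_x]<+\infty$ \emph{is} the proposition, and your sketch of it does not close. The shell-and-angle counting needs, at its core, a packing bound for directions in $\Sigma_x$, i.e.\ an a priori bound on how many geodesics issuing from $x$ can be pairwise angle-separated; and, as you yourself note, the $\CAT(0)$ inequality bounds Alexandrov angles \emph{above} by comparison angles, so separation of comparison angles gives no separation in $\Sigma_x$. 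More seriously, the hypothesis you invoke to repair this --- that local compactness of $X$ makes $\Sigma_x$ and $T_xX$ ``compact'' or otherwise tame --- is not available: properness of $X$ does not imply compactness or finite dimensionality of the space of directions (in a compact convex subset of Hilbert space such as $\prod_n[0,1/n]\subset\ell^2$, infinitely many pairwise orthogonal directions issue from a point). The structural theory you gesture at (Lytchak--Nagano \cite{Lytchak}, used in the paper in Theorem \ref{th:a.e.-extension}) yields local finite dimensionality only under geodesic completeness, which is not assumed in this proposition. So the ``only genuinely non-routine ingredient'' of your plan is exactly the content of the statement, and the route you indicate cannot supply it from the stated hypotheses.

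The paper's own proof is entirely different and does not use covering numbers, angles, or tangent structure at all: it reduces to a closed ball $B[x,r]$ and argues measure-theoretically that if $\dim_H B[x,r]=+\infty$ then $\mathscr H_d(B[x,r])=+\infty$ for every finite $d\geq 1$, while the bound $H^d_{\delta}(B[x,r])\leq \diam^d B[x,r]$ is said to force $\mathscr H_d(B[x,r])<+\infty$, a contradiction. (Be aware that the single-set cover behind that bound is admissible only when $\diam B[x,r]<\delta$, so the paper's ``independently of $\delta$'' is itself delicate; your instinct that genuine geometric input is needed here is, in that sense, well placed, but it does not repair the unproved local claim on which your argument rests.)
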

\begin{proof}
It suffices to prove the claim for closed geodesic balls. Let $B[x,r]$ for some $r>0$ be a closed geodesic ball in $X$. If $\dim_HB[x,r]=+\infty$ then for any $d\in[1,+\infty)$ we have $\mathscr H_d(B[x,r])=+\infty$. On the other hand for any fixed $\delta>0$ it holds that $H^d_{\delta}(B[x,r])\leq \diam^dB[x,r]<+\infty$ independently of $\delta$, implying $\mathscr H_d(B[x,r])<+\infty$. This is impossible.
\end{proof}
 Most of the theory we build on later is local in nature. Therefore
in view of Proposition \ref{p:finite-dim} one does not lose generality by assuming that $\dim_HX$ is finite. Indeed since $X$ is taken to be locally compact and complete then we can write $X=\bigcup_{k\in\mathbb N}B[x_k,r]$ where $\{x_k\}_{k\in\mathbb N}\subseteq X$ is a countable dense set and $r>0$, therefore we can equip $X$ with a volume measure $\mathscr H_{\infty}$ such that $\mathscr H_{\infty}|_{B[x_k,r]}=\mathscr H_{d_k}$ where $d_k=\dim_HB[x_k,r]$. In particular for any $x\in X$ the space locally at $x$ would be finite dimensional.

\subsection{Geodesic derivative} 
 \begin{mydef}
 	\label{d:geo-derivative}
 	Let $f:X\to(-\infty,+\infty]$ be a function and $x\in X$. For $\gamma\in\Gamma_x(X)$ define 
 	\begin{equation}
 	\label{eq:geo-derivative}
 	D^+f(x;\gamma):=\limsup_{h\to 0}\frac{f(\gamma(s+h))-f(\gamma(s))}{h}
 	\end{equation}
 	where $\gamma(s)=x$ for some $s\in[0,1]$. Similarly we define $D^-f(x;\gamma)$ with $\limsup$ replaced by $\liminf$.
 	If both limits coincide we say that $f$ is geodesically differentiable at $x$ along $\gamma$ and we simply denote it by $Df(x;\gamma)$. If $f$ is geodesically differentiable at every $x$ along every $\gamma\in\Gamma_x(X)$ then $f$ is geodesically differentiable. 
 \end{mydef}
 
 \begin{mydef}
 \label{d:geo-partial-derivative}
 Let $f:X\times\cdots\times X\to(-\infty,+\infty]$ be a function, then we denote by $D_{x_i}f(x_1,\cdots,x_n;\gamma)$ the geodesic derivative of $f$, if it exists, w.r.t. $x_i$ along $\gamma\in\Gamma_{x_i}(X)$.
 \end{mydef}

 Let $\mu$ be a measure on $X$ not assigning zero volume to positive volumes sets. A measurable set $S\subseteq X$ is said to have density $\rho$ at $x\in X$ with respect to $\mu$ if 
 \begin{equation}
 \label{eq:density}
 \lim_{\varepsilon\to 0}\frac{\mu(S\cap B(x,\varepsilon))}{\mu(B(x,\varepsilon))}=\rho.
 \end{equation}
 It is evident that $\rho\in[0,1]$ and in particular for any $\mu$-negligible set $S$ it holds that $\rho=0$ at any $x\in X$. If $X\setminus S$ is a $\mu$-negligible set then $S$ has density $\rho=1$ at any $x\in X$.
 
 \begin{mydef}
 \label{d:approx-derivative}
 Let $\Omega\subseteq X$ be an open set and $\mu$ a measure on $X$ not assigning zero volume to positive volume sets. Let $\psi:\Omega\to [-\infty,+\infty]$ be a measurable function. Then $f$ is said to be $\mu$-approximately geodesically differentiable at $x\in X$ along $\gamma\in\Gamma_x(X)$ if there exists a measurable function $\widetilde\psi:\Omega\to[-\infty,+\infty]$, differentiable at $x$ along $\gamma$, such that the set $\{\psi=\widetilde\psi\}$ has density $1$ at $x$. We define the $\mu$-approximate geodesic derivative of $\psi$ at $x$ along $\gamma\in\Gamma_x(X)$ by the formula $\widetilde D\psi(x;\gamma)=D\widetilde\psi(x;\gamma)$. 
 \end{mydef}
 
 \section{Three conditions on $\CAT(0)$ spaces}
 \label{s:3-conditions}
 \subsection{Geodesic extension}
 \begin{mydef}
 \label{d:geo-extension} Given $\gamma\in\Gamma(X)$ let $\inte\gamma=\gamma\setminus\{\gamma(0),\gamma(1)\}$. A geodesic $\gamma\in\Gamma(X)$ is extendable if there exists $\widetilde\gamma\in\Gamma(X)$ such that $ \gamma\subset\inte(\widetilde{\gamma})$. If every geodesic $\gamma\in\Gamma(X)$ is extendable, then we say that the space $(X,d)$ satisfies the geodesic extension property.
 \end{mydef}
 If $X$ has the geodesic extension property then for every $x\in X$ and $\gamma\in\Gamma_x(X)$ there exists $\widetilde\gamma\in\Gamma_x(X)$ such that $x$ is an interior point of $\widetilde \gamma$, i.e. $x=\widetilde{\gamma}(t)$ for some $t\in(0,1)$. 
 Geodesic extension can be restricted to any connected subset $S\subseteq X$ in an obvious manner, $S$ satisfies the geodesic extension property if any geodesic $\gamma\in\Gamma(S)$ is extendable in $S$. Especially open connected sets inherit this property.
 In \cite{Lytchak} geodesic extension is considered for general geodesic spaces with curvature bounded above.
 \begin{proposition}\cite[Lemma 5.8, \S2]{Brid}
 \label{p:geo-extension}
 A $\CAT(0)$ space $(X,d)$ has the geodesic extension property if and only if every geodesic can be extended indefinitely, that is for every $\gamma\in\Gamma(X)$ there exists an isometry $\mathscr I:\mathbb R\to X$, that is $d(\mathscr I(s),\mathscr I(t))=|s-t|$ for all $s,t\in\mathbb R$, such that $\mathscr I(t\,\ell(\gamma))=\gamma(t)$ for every $t\in[0,1]$.
 \end{proposition}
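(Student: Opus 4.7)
The reverse implication is immediate: given $\gamma \in \Gamma(X)$ with isometric extension $\mathscr I:\mathbb R\to X$, the restriction of $\mathscr I$ to any bounded interval strictly containing the arc-length domain of $\gamma$, reparameterized on $[0,1]$, produces a geodesic $\widetilde\gamma \in \Gamma(X)$ with $\gamma\subset\inte(\widetilde\gamma)$.

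For the forward direction, the plan is a Zorn's lemma argument. First reparameterize $\gamma$ by arc length as $\gamma_0:[0,\ell]\to X$, and let $\mathcal E$ be the poset of isometric embeddings $\sigma:I_\sigma\to X$ with $I_\sigma\subseteq\mathbb R$ an interval containing $[0,\ell]$ and $\sigma|_{[0,\ell]}=\gamma_0$, ordered by $\sigma_1\preceq\sigma_2$ iff $I_{\sigma_1}\subseteq I_{\sigma_2}$ and $\sigma_2|_{I_{\sigma_1}}=\sigma_1$. Every chain has an upper bound obtained by gluing the elements along their consistent overlaps on the union of domains; the resulting map is an isometric embedding because any two points in the union lie in $I_\sigma$ for some $\sigma$ in the chain. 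Zorn's lemma then yields a maximal element $\sigma^*:I^*\to X$, and the task reduces to showing $I^*=\mathbb R$.

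Suppose $\beta:=\sup I^*<\infty$. If $\beta\notin I^*$, then for any $t_n\uparrow\beta$ the sequence $\sigma^*(t_n)$ is Cauchy since $d(\sigma^*(t_n),\sigma^*(t_m))=|t_n-t_m|$, so by completeness of $X$ its limit exists and extends $\sigma^*$ to $I^*\cup\{\beta\}$, contradicting maximality. If $\beta\in I^*$ (first applying the previous step on the left if needed, so that $I^*=[\alpha,\beta]$ is closed), reparameterize $\sigma^*$ on $[0,1]$ as an element of $\Gamma(X)$ and invoke the geodesic extension property to obtain $\widetilde\sigma\in\Gamma(X)$ with $\sigma^*\subset\inte(\widetilde\sigma)$. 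Arc-length reparameterize $\widetilde\sigma$ with the orientation matching $\sigma^*$; by uniqueness of arc-length parameterizations of a geodesic segment up to shift and reflection, the result restricts to $\sigma^*$ on $[\alpha,\beta]$ and extends it by positive amounts past both endpoints, yielding a strict majorant of $\sigma^*$ in $\mathcal E$. The mirror argument forces $\inf I^*=-\infty$, so $I^*=\mathbb R$, and $\mathscr I:=\sigma^*$ is the desired isometry, with $\mathscr I(t\ell)=\gamma(t)$ for $t\in[0,1]$.

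The delicate point is the junction in the closed-endpoint sub-case: one must verify that the arc-length reparameterization of the local extension $\widetilde\sigma$ really coincides with $\sigma^*$ over $[\alpha,\beta]$, not merely traces the same image. With the correct orientation this agreement is automatic, and the isometry property then propagates across the junction because $\widetilde\sigma$ is a single isometric embedding on its entire enlarged domain.
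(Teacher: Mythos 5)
The paper does not prove this proposition at all --- it is quoted from Bridson--Haefliger --- so your argument has to stand on its own, and in outline it does: the Zorn's lemma construction of a maximal isometric extension, the completeness argument at an open finite endpoint, and the use of the extension property at a closed finite endpoint is exactly the standard route. Two remarks before the real issue. First, you use completeness of $X$ in the Cauchy-sequence step; this is genuinely needed (an open ball in $\mathbb R^n$ has the extension property of Definition \ref{d:geo-extension} but admits no geodesic lines), and it is legitimate here only because the paper's standing hypothesis is that $X$ is complete --- you should say so explicitly, since the proposition as displayed does not repeat that hypothesis. Second, your ``delicate point'' about the junction in the bounded case is handled correctly: in a uniquely geodesic space the image of $\sigma^*$ is a subsegment of the image of $\widetilde\sigma$, and arc-length parameterizations of a segment are unique up to shift and reflection, so with the right orientation the agreement is automatic.

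The genuine gap is the case $I^*=(-\infty,\beta]$ with $\beta<\infty$ (or its mirror image). Your parenthetical ``first applying the previous step on the left if needed, so that $I^*=[\alpha,\beta]$ is closed'' does not do what you need: the completeness step can only adjoin a \emph{finite} missing endpoint, it cannot make an interval that is unbounded below into a bounded one. In that case $\sigma^*$ has infinite length, so it is not an element of $\Gamma(X)$ and cannot be reparameterized on $[0,1]$, and the extension property cannot be applied to it directly; as written, your contradiction argument simply does not cover this configuration of the maximal element, so the conclusion $I^*=\mathbb R$ is not yet established. The standard patch: apply the extension property to a bounded terminal subsegment, say $\sigma^*|_{[\beta-1,\beta]}\in\Gamma(X)$, obtain $\widetilde\sigma$ containing it in its interior, and concatenate $\sigma^*$ with the portion of $\widetilde\sigma$ beyond $\beta$. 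But then the concatenation is only \emph{locally} an isometric embedding (near the junction it coincides with $\widetilde\sigma$, far to the left with $\sigma^*$), and you must invoke the $\CAT(0)$ fact that local geodesics are global geodesics (or run an equivalent comparison-triangle argument with angle $\pi$ at the junction) to conclude it is a strict majorant of $\sigma^*$ in $\mathcal E$. With that case added --- or with the whole endpoint analysis rewritten so that the extension property is only ever applied to bounded terminal subsegments --- the proof is complete.
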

 Now consider the space $X=\{(x,y)\in\mathbb R^2\,:\,x\geq 0\}$ equipped with the Euclidean distance. Any geodesic starting from a point $(x,0)$ or $(0,y)$ is not extendible in the sense of Definition \ref{d:geo-extension}. However for any $(x,y)\in X$ with $x,y>0$ there is $r>0$ such that $B((x,y),r)$ enjoys the geodesic extension property. This leads to the following notion.
 
\begin{mydef}
	\label{d:extension-local}
	$(X,d)$ enjoys the local geodesic extension property at $x\in X$ if for some $r>0$ there exists $B(x,r)$ satisfying the geodesic extension property. If this condition holds for all $x\in X$ then $(X,d)$ is said to enjoy the local geodesic extension property.
\end{mydef}

 \begin{proposition}
 \label{p:equiv}
 A $\CAT(0)$ space $X$ enjoys the geodesic extensions if and only if $X$ has the local geodesic extension property.
 \end{proposition}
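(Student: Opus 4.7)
The forward direction is essentially the observation recorded just before Definition \ref{d:extension-local}: open connected sets inherit the geodesic extension property from $X$. So my plan for this direction is simply to note that if $X$ has the global property, then every open ball $B(x,r)$, being open and connected, inherits it, hence the local property holds at each $x\in X$.

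For the converse, assume $X$ enjoys the local geodesic extension property. By Proposition \ref{p:geo-extension}, I would reduce to showing that any unit-speed geodesic $\gamma\colon[0,\ell]\to X$ extends to an isometric copy of $\mathbb R$; I will describe the forward extension, with the backward being symmetric. The natural move is to set
\[
T \;=\; \sup\{T'\geq\ell \,:\, \gamma \text{ extends to a unit-speed geodesic } [0,T']\to X\}
\]
and aim to show $T=+\infty$ by contradiction. Uniqueness of $\CAT(0)$ geodesics makes all such candidate extensions coherent on common domains, so they glue into a single unit-speed geodesic $\widetilde\gamma\colon[0,T)\to X$.

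Assuming $T<+\infty$, I would first produce a limit point at time $T$: pick $s_n\nearrow T$, note that $\widetilde\gamma(s_n)$ is Cauchy because $\widetilde\gamma$ is an isometric embedding, and invoke completeness of $X$ to obtain $z:=\lim_n\widetilde\gamma(s_n)$, so that setting $\widetilde\gamma(T):=z$ extends $\widetilde\gamma$ isometrically to $[0,T]$. Next I would apply the local extension hypothesis at $z$: pick $r>0$ with $B(z,r)$ enjoying the geodesic extension property, and choose $\epsilon\in(0,r)$ so small that $\widetilde\gamma|_{[T-\epsilon,T]}\subset B(z,r)$. The extension property, applied to this sub-geodesic, yields a longer geodesic inside $B(z,r)$ containing $\widetilde\gamma|_{[T-\epsilon,T]}$ strictly in its interior; in particular, it produces a prolongation $\eta\colon[T,T+\delta]\to B(z,r)$ with $\eta(T)=z$ and some $\delta>0$, compatible with $\widetilde\gamma$ at $T$ by uniqueness of $\CAT(0)$ geodesics.

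The final step is to upgrade the concatenation $\widehat\gamma$ of $\widetilde\gamma|_{[0,T]}$ and $\eta$ to a bona fide geodesic. At every point $\widehat\gamma$ coincides locally with an honest geodesic (a sub-arc of $\widetilde\gamma$, of $\eta$, or of the extended geodesic inside $B(z,r)$ straddling $z$), so it is a local geodesic. Invoking the classical fact that local geodesics in $\CAT(0)$ spaces are global geodesics \cite[Chapter II.1]{Brid}, $\widehat\gamma$ is itself a geodesic, contradicting the maximality of $T$. Hence $T=+\infty$; applying the symmetric argument in the backward direction completes the proof. The main hurdle is precisely this local-to-global upgrade at the junction $z$: constructing a local continuation is routine, but promoting it to a distance-realizing geodesic across $z$ rests essentially on the $\CAT(0)$ hypothesis.
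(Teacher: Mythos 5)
Your forward direction is fine, and the mechanism you use at the junction point (the concatenation is a local geodesic, hence a geodesic in a $\CAT(0)$ space) is exactly the right tool --- it is also what makes the paper's own, much shorter argument work. But there is a genuine gap in your converse: the claim that ``uniqueness of $\CAT(0)$ geodesics makes all such candidate extensions coherent on common domains, so they glue into a single unit-speed geodesic $\widetilde\gamma\colon[0,T)\to X$'' is false. Uniqueness in a $\CAT(0)$ space concerns the geodesic joining two \emph{prescribed} endpoints; it says nothing about prolongations, and $\CAT(0)$ spaces do branch: in a metric tripod (a tree, hence $\CAT(0)$) the segment $[a,o]$ extends beyond $o$ both to $[a,b]$ and to $[a,c]$, and these extensions disagree on their common domain. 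Consequently the extensions to $[0,T']$ for $T'<T$ need not be restrictions of one another, there is no canonical $\widetilde\gamma$ on $[0,T)$, and the limit point $z$ on which the rest of your argument hinges is not produced. Since your whole contradiction scheme is built on this single curve, the step is load-bearing as written.

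The gap is repairable, but it needs a different idea, not the uniqueness of geodesics. One fix: apply Zorn's lemma to the poset of unit-speed geodesic prolongations of $\gamma$ ordered by restriction (unions along chains are again geodesics, since every compact subinterval lies in some member); a maximal element exists, and your completeness argument plus the local extension property and the local-to-global principle show that its domain cannot be a bounded interval, giving the extension to $\mathbb R$. A simpler route --- the one the paper takes --- avoids the exhaustion entirely: Definition \ref{d:geo-extension} only asks for a single incremental two-sided extension, so it suffices to extend the terminal sub-segments of $\gamma$ inside small balls around its two endpoints (provided by the local hypothesis) and upgrade the concatenation to a geodesic via the same local-geodesics-are-geodesics fact from \cite{Brid}; Proposition \ref{p:geo-extension} then yields indefinite extendability if one wants it. In other words, your reduction via Proposition \ref{p:geo-extension} makes you prove more than is needed, and it is precisely in that extra step that the branching problem bites.
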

 \begin{proof}
 It is clear that geodesic extensions implies local extension. Now let $X$ satisfy the local geodesic extension property. Let $\gamma\in\Gamma(X)$
 and denote by 
 $x=\gamma(0), y=\gamma(1)$. There are $r,r'>0$ such that $B(x,r)$ and $B(y,r')$ satisfy the geodesic extensions. Let $\gamma^1_B=\gamma\cap B[x,r/2]$ and $\gamma^2_B=\gamma\cap B[y,r'/2]$. In particular $\gamma^1_B\in\Gamma_x(B(x,r))$ and $\gamma^2_B\in \Gamma_y(B(y,r'))$. There are $\widetilde\gamma_1\in \Gamma_x(B(x,r)), \widetilde\gamma_2\in\Gamma_y(B(y,r'))$ such that $\gamma^1_B\subset\inte\widetilde \gamma_1$ and $\gamma^2_B\subset\inte\widetilde\gamma_2$.
  But then $\gamma^*=[\widetilde\gamma_1(0),x]\cup[x,y]\cup[y,\widetilde\gamma_2(1)]$ is a geodesic in $\Gamma(X)$ strictly containing $\gamma$. 
 \end{proof}
 
  The space $X=\{(x,y)\in\mathbb R^2\,:\,x\geq 0\}$ enjoys the local geodesic extensions $\mathscr H_2$-a.e.. Indeed any point $(x,y)$ in the interior $X$  one can find an open ball at $B((x,y),r)$ such that any $\gamma\in\Gamma_{(x,y)}(B((x,y),r))$ is extendible in $B((x,y),r)$. However at points on the axis, which are $\mathscr H_2$-null sets, the local geodesic extension property fails. This raises a question of whether a general locally compact complete $\CAT(0)$ space satisfies the local geodesic extension $\mathscr H_d$-a.e., where $d$ is the Hausdorff dimension of the space. 
  
  We say an open ball $B(x,r)$ has bounded capacity if $B[x,r']$ can be covered by a finite number of balls $B(y_i,r'/2)$ whenever $r'\geq r$.

  \begin{thm}
  \label{th:a.e.-extension}
  A locally compact complete $\CAT(0)$ space $(X,d)$ with $\dim_HX<+\infty$ satisfies the local geodesic extension condition $\mathscr H_d$-a.e..
  \end{thm}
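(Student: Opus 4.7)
Let $E=\{x\in X : (X,d)\text{ does not enjoy the local geodesic extension at }x\}$; the goal is to show $\mathscr H_d(E)=0$.

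I would first reduce to a bounded region. Since $X$ is locally compact and complete, it is separable and hence $\sigma$-compact, so one can cover $X=\bigcup_{k\in\mathbb N}B[x_k,r_k]$ by closed balls whose open counterparts have bounded capacity. By $\sigma$-subadditivity of $\mathscr H_d$ together with Proposition \ref{p:finite-dim}, it suffices to establish $\mathscr H_d(E\cap B[x_k,r_k])=0$ for every $k$; fix one such closed ball $B=B[x_0,r_0]$.

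Next, I would encode the obstruction at $x\in E\cap B$ by a \emph{dead endpoint}: for each $n\in\mathbb N$ the open ball $B(x,1/n)$ admits a non-extendable geodesic $\gamma_n$, and passing to the obstructing endpoint yields a point $p_n\in B(x,1/n)$ carrying a direction $v_n\in\Sigma_{p_n}$ that has no continuation inside $B(x,1/n)$. Writing
\[
T:=\{p\in B : p \text{ is a dead endpoint at some scale } 1/n,\ n\in\mathbb N\},
\]
the convergence $p_n\to x$ gives $E\cap B\subseteq\overline{T}$, so by Borel regularity of $\mathscr H_d$ it suffices to prove $\mathscr H_d(\overline{T})=0$.

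The crucial third step is to show that $T$ has Hausdorff dimension strictly less than $d$. The bounded capacity assumption supplies, at each dyadic scale $2^{-n}$, a uniformly finite cover of $B$ by balls of radius $2^{-n}$; inside each such ball the $\CAT(0)$ projection inequality \eqref{eq:projection-inequality} of Lemma \ref{l:projections}, together with the quadratic inequality \eqref{eq:quadratic}, forces the dead points to lie on a single metric ``hyper-surface'' whose $(d-1)$-content is controlled by the diameter of the ball. Summing across scales exhibits $T$ as a countable union of sets of finite $\mathscr H_{d-1}$-measure, so Lemma \ref{l:Hausdorff}(i) together with the computation of Corollary \ref{c:Hausdorff} gives $\mathscr H_d(T)=0$; the same $\sigma$-finiteness propagates to the closure $\overline T$. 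I expect the generalized metric Eilenberg inequality of \cite{Esmayli,Piotr} to be the natural analytic device converting the $\CAT(0)$ constraint on dead directions into this concrete $(d-1)$-dimensional bound.

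The main obstacle lies exactly in this dimension-reduction step: without any a priori smooth structure on $X$, one cannot assemble the dead points into a Lipschitz $(d-1)$-surface directly, and the proof must rely only on the quadratic inequality \eqref{eq:quadratic}, the Alexandrov angle comparison, and the bounded-capacity hypothesis introduced just before the theorem. The technical core will be an inductive-in-scale covering argument quantifying how dead directions at nearby dead points must be almost parallel, so that each scale contributes a uniformly bounded $(d-1)$-content to $T$.
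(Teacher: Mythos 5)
Your reduction to a closed ball and the observation that a geodesic lying in an open ball which fails to extend \emph{inside} that ball must have an intrinsically dead endpoint (since a compact geodesic in an open ball has positive distance to the complement, so any extension in $X$ can be truncated to stay in the ball) are sound, and they do give $E\cap B\subseteq\overline T$. From there, however, the argument has two genuine gaps. First, the step you yourself call crucial is only asserted: nothing in the tools you invoke --- the projection inequality \eqref{eq:projection-inequality}, the quadratic inequality \eqref{eq:quadratic}, bounded capacity --- yields the claim that the dead endpoints inside a small ball lie on ``a single metric hyper-surface'' of controlled $(d-1)$-content. Eilenberg's inequality bounds $\int_{\gamma}\mathscr H_{d-1}\bigl(f^{-1}(y)\cap S\bigr)\,d\mathscr H_1(y)$ by $\mathscr H_d(S)$ for a Lipschitz map $f$; to extract from it an $\mathscr H_{d-1}$-bound on the dead set you would have to realize that set inside fibers of a concrete Lipschitz map, and no such map is constructed. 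Note also that ``bounded capacity'' as defined just before the theorem concerns coverings only at scales $r'\geq r$; it is not a doubling property and does not supply uniformly finite covers at every dyadic scale. Second, even if you had $\dim_H T\leq d-1$, hence $\mathscr H_d(T)=0$, you cannot conclude $\mathscr H_d(\overline T)=0$: Hausdorff-null (or $\mathscr H_{d-1}$-$\sigma$-finite) sets can be dense with closure of full measure, so nullity does not ``propagate to the closure''. Since your inclusion places $E$ only inside $\overline T$, not inside $T$, the final step collapses; you would need to bound $E$ itself.

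The paper avoids both problems by working at the level of tangent cones rather than dead points: if local extension fails at $x$ at every scale, then the offending direction makes Alexandrov angle $<\pi$ with every other direction at $x$, so $T_xX$ cannot be isometric to any $\mathbb R^k$; the Lytchak--Nagano structure theory for spaces with upper curvature bounds (openness and density of the $k$-regular parts of balls, with $\overline X_k\setminus M_k$ of Hausdorff dimension $k-1$ and $M_k\setminus R_k$ of dimension $k-2$) then shows that the set of points whose tangent cone is not isometric to $\mathbb R^d$ is $\mathscr H_d$-null, which contains your set $E$. If you insist on an elementary multiscale covering argument you would in effect be re-proving a piece of that structure theory, and the quantitative ``dead directions at nearby dead points are almost parallel'' statement you hope for is exactly the part that is missing.
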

  \begin{proof}Let $\dim_HX=d$ and $x\in X$. Suppose that for any $r>0$ there is a geodesic $\gamma\in\Gamma_x(B(x,r))$ not extendible in $B(x,r)$, then the Alexandrov angle $\alpha_x(\gamma,\eta)$ between $\gamma$ and any other geodesic $\eta\in\Gamma_x(B(x,r))$ satisfies $\alpha_x(\gamma,\eta)<\pi$. In particular we would get $d_{\Sigma}(v_{\gamma},v_{\eta})<\pi$ where $v_{\gamma}, v_{\eta}\in\Sigma_x$ are the directions with respect to $\gamma$ and $\eta$, consequently $d_T((t,v_{\gamma}),(s,v_{\eta}))<t+s$ for any $t,s>0$. In such case $T_xX$ cannot be isometric to $\mathbb R^k$ for any $k=0,1,2,\cdots,d$. In this view to prove our statement it is enough to demonstrate that $T_xX$ is for $\mathscr H_d$-a.a. $x\in X$ isometric to $\mathbb R^d$.
  Let $X_k=\{x\in X\,:\,\dim_HT_xX=k\}$ for $k=0,1,2,\cdots,d$. 
   Because $X$ is separable one can cover it by countably many balls $B(x_i,r),\,i\in\mathbb N,\,r>0$. Moreover, since $X$ is locally compact and complete any ball $B(x,r)$ can be covered by a finite number of balls $B(y_i,r/2)$ where $y_i\in B[x,r]$ for $i=1,2,\cdots,N$ for some $N\in\mathbb N$ (possibly depending on $x$). In particular any ball $B(x,r)$ has bounded capacity. Denote by $\text{Reg}_kB(x,r)$ the $k$-regular part of $B(x,r)$, see \cite[Lytchak--Nagano, \S11.4]{Lytchak}, then an application of \cite[Corollary 11.8]{Lytchak} implies that $\text{Reg}_kB(x,r)$ is open in $B(x,r)$, dense in $B_k(x,r)=\{y\in B(x,r)\,:\,\dim_HT_yX=k\}$ and locally bi-Lipschitz homeomorphic to $\mathbb R^k$. Let $M_k\subseteq X_k$ be the union of $k$-regular parts of the balls $B(x_i,r)$ covering $X$. Then $M_k$ is open in $X$, dense in $X_k$ and locally bi-Lipschitz to $\mathbb R^k$. In view of \cite[Theorem 1.2]{Lytchak} the set $\overline{X}_k\setminus M_k$ is $(k-1)$--Hausdorff dimensional, by \cite[Theorem 1.3]{Lytchak} the set $M_k$ contains $ R_k=\{x\in X\,:\,T_xX\,\text{isometric to}\,\mathbb R^k\}$ and $M_k\setminus R_k$ is $(k-2)$--Hausdorff dimensional. In particular $R_d\subseteq M_d$ and it satisfies $\mathscr H_d(X\setminus R_d)=0$. 
  \end{proof}

 \subsection{Positive angles}
 \begin{mydef}
 	\label{d:normalpoint} A $\CAT(0)$ space $(X,d)$ has the positive angles at $x\in X$ if there exists $B(x,r),\,r>0$ such that for any $\gamma,\eta\in\Gamma_x^0(B(x,r))$ the Alexandrov angle $\alpha_x(\gamma,\eta)$ is positive whenever $\gamma$ and $\eta$ are not subsets of one another. If $X$ has this property at every $x\in X$, then $X$ enjoys the positive angles. If $\inf r>0$, then $X$ satisfies this property uniformly.
 \end{mydef}

\begin{proposition}
	\label{p:normal-points}Let $X$ have the positive angles at $x\in X$ for some $r>0$, then
	\begin{enumerate}[(i)]
		\item $B(x,r')$ enjoys the positive angles at $x$ for any $r'<r$;
		\item $B[x,r]$ enjoys the positive angles at $x$.
	\end{enumerate}
\end{proposition}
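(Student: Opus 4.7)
The plan is to exploit that the Alexandrov angle $\alpha_x(\gamma,\eta)$ depends only on the germs of the two geodesics at $x$, whereas the radius in Definition \ref{d:normalpoint} only controls which curves are admissible as test geodesics. Part (i) is then essentially tautological: since $r' < r$ we have $B(x,r') \subset B(x,r)$, hence $\Gamma^0_x(B(x,r')) \subseteq \Gamma^0_x(B(x,r))$, so any pair $\gamma,\eta$ in the smaller class that is not mutually contained is already admissible at scale $r$ and the hypothesis supplies $\alpha_x(\gamma,\eta) > 0$.

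For (ii), fix non-trivial $\gamma,\eta \in \Gamma^0_x(B[x,r])$ with neither containing the other, and for $\tau \in (0,1)$ introduce the reparametrized truncations $\gamma_\tau(t) := \gamma(\tau t)$ and $\eta_\tau(t) := \eta(\tau t)$. Each $\gamma_\tau$ is a constant-speed geodesic starting at $x$ whose image lies in $B(x,\tau r) \subset B(x,r)$, so $\gamma_\tau, \eta_\tau \in \Gamma^0_x(B(x,r))$; and since the defining limit in \eqref{eq:Alexandrov's angle} depends only on arbitrarily small initial parameters, $\alpha_x(\gamma_\tau,\eta_\tau) = \alpha_x(\gamma,\eta)$. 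It therefore suffices to find some $\tau \in (0,1)$ for which $\gamma_\tau$ and $\eta_\tau$ are not subsets of one another, and then apply the positive angles hypothesis at scale $r$.

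The crux is producing such a $\tau$. If it fails, then for a sequence $\tau_n \uparrow 1$, after passing to a subsequence, one inclusion, say $\gamma_{\tau_n} \subseteq \eta_{\tau_n}$, holds for every $n$. Since $\eta$ is a non-trivial constant-speed geodesic it is injective on $[0,1]$, so there is a unique $\sigma_n \in [0,\tau_n]$ with $\gamma(\tau_n) = \eta(\sigma_n)$; comparing distances to $x$ forces $\sigma_n = \tau_n \ell(\gamma)/\ell(\eta)$, whence $\ell(\gamma) \leq \ell(\eta)$. Uniqueness of geodesics in the $\CAT(0)$ space then gives $\gamma([0,\tau_n]) = \eta([0,\sigma_n])$, and passing to the limit $n \to \infty$ yields $\gamma(1) = \eta(\ell(\gamma)/\ell(\eta))$; uniqueness applied once more forces $\gamma \subseteq \eta$ as sets, contradicting the standing hypothesis. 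The symmetric subsequential case gives the reverse inclusion with the roles of $\gamma$ and $\eta$ swapped. The only subtle point is insisting on non-triviality of $\gamma$ and $\eta$ at the outset so that injectivity of $\eta$ pins down $\sigma_n$; if either geodesic is trivial, the ``not subsets of one another'' condition is already vacuous and there is nothing to prove.
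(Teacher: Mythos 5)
Your proof is correct and follows essentially the same route as the paper: truncate $\gamma,\eta$ so that they lie in the open ball $B(x,r)$, observe that the Alexandrov angle is unchanged because it depends only on the initial behaviour of the geodesics at $x$, and then invoke the positive-angles hypothesis at scale $r$. The only real difference is an improvement in rigor: you explicitly verify (via injectivity of nontrivial geodesics, uniqueness of geodesics in $\CAT(0)$ spaces, and a limiting argument) that some truncation pair is not nested, a point the paper's proof silently assumes when it applies the hypothesis to $[x,\gamma(t_n)]$ and $[x,\eta(t_n)]$, and you replace the paper's appeal to continuity of the angle map with the direct germ-invariance of the Alexandrov angle.
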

\begin{proof}
	\begin{enumerate}[(i)]
		\item For any $r'<r$ and $\gamma,\eta\in\Gamma^0_x(B(x,r'))$ implies $\gamma,\eta\in \Gamma^0_x(B(x,r))$. The claim follows from Definition \ref{d:normalpoint} and \eqref{eq:Alexandrov's angle}.
		\item Let $\gamma,\eta\in\Gamma^0_x(B[x,r])$ so that one is not subset of the other. Denote by $y=\gamma(1)$ and $z=\eta(1)$.
		Define $y_n=\gamma(t_n),\,z_n=\eta(t_n)$ for $n\in\mathbb N$ and a sequence $(t_n)_{n\in\mathbb N}\subset(0,1)$ satisfying $\lim_{n\to+\infty}t_n=1$. Evidently $y_n,z_n\in B(x,r)$ for every $n\in\mathbb N$ and in particular $[x,y_n],[x,z_n]\in\Gamma^0_x(B(x,r))$.
		Positive angles at $x\in X$ with $r>0$ implies $\alpha_x([x,y_n],[x,z_n])>0$ for every $n\in\mathbb N$. Note that $y_{n}\in[x,y],\,z_{n}\in[x,z]$ for every $n\in\mathbb N$, we have that $\alpha_x([x,y_n],[x,z_n])=\alpha_x([x,y_m],[x,z_m])=\alpha$ for all $m,n\in \mathbb N$ for some $\alpha>0$. Because $\lim_{n\to+\infty}d(y,y_{n})=0$ and $\lim_{n\to+\infty}d(z,z_{n})=0$, then continuity of the mapping $(y,z)\mapsto\alpha_x([x,y],[x,z])$, see e.g. \cite[Prop. 1.2.8]{Bacak} yields $$\alpha_x(\gamma,\eta)=\alpha_x([x,y],[x,z])=\lim_{n\to+\infty}\alpha_x([x,y_n],[x,z_n])=\alpha>0.$$
		Since $\gamma,\eta\in\Gamma^0_x(B[x,r])$ are arbitrary, then the result follows.
	\end{enumerate}
\end{proof}
The condition of positive angles seems quite restrictive for the class of $\CAT(0)$ spaces, but we only need this property to hold $\mathscr H_{d}$-a.e.. For practical purposes many $\CAT(0)$ spaces have positive angles almost everywhere, in particular any Riemannian manifold of non-positive curvature, any tree-like space and $\CAT(0)$ polyhedral complexes, for an application see e.g. the BHV space of phylogenetic trees \cite[Billera et al.]{BHV}. 
 Nevertheless one can construct locally compact $\CAT(0)$ spaces where this property fails anywhere. We illustrate this constructively with the following example.
 
\begin{figure}[t]
	\includegraphics[width=16cm]{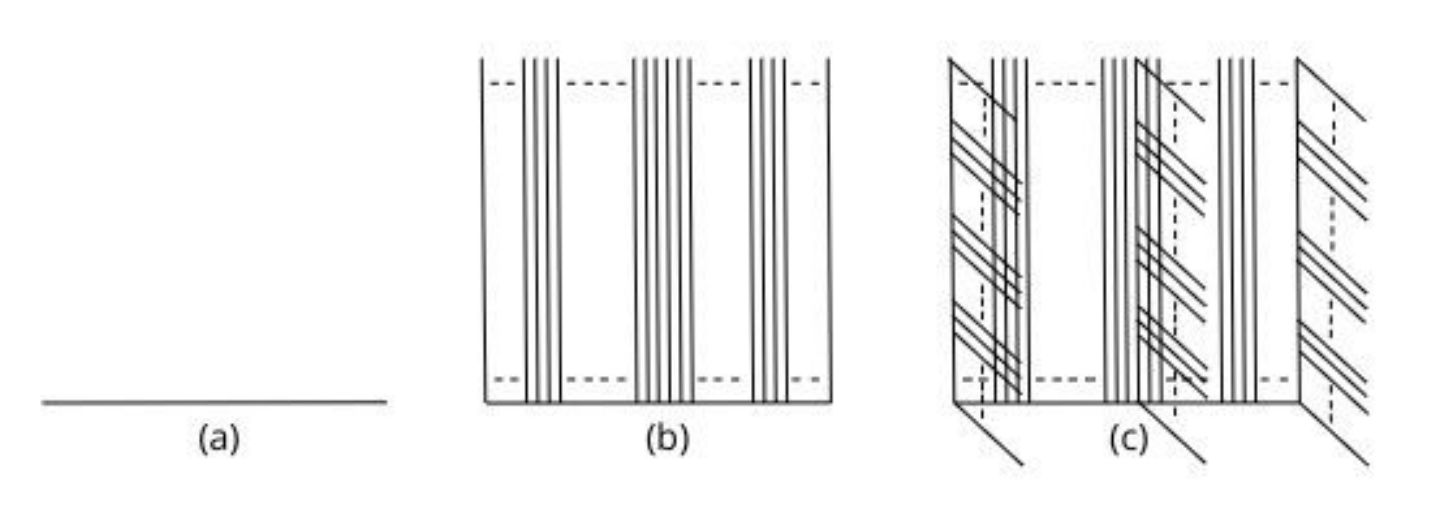}
	\caption{(a) the space $X_0$, (b) the rational comb $X_1$ and (c) a general iterated rational comb $X_k$.}
	\label{f:rational-comb}
\end{figure}
\subsubsection{The example of the iterated rational comb}
Let $X_0=[x_0,y_0]$ be a segment of unit length equipped with the length metric $d_0$. Then at each rational point on $X_0$ we glue a copy of $X_0$, this is the next iterated space $X_1$ which again is equipped with the length metric $d_1$. Note that by a rational point on $X_0$ we mean an element $p\in X_0$ such that $d_0(x_0,p)$ is a rational number. The space $X_1$ resembles a comb, see Figure \ref{f:rational-comb} (b). Then at each rational point of every tooth of $X_1$ we glue a copy of $X_0$, and this will be the next iterated space $X_2$ and we equip it with its length metric $d_2$. In this fashion we build a general iterated comb space $X_k$, see Figure \ref{f:rational-comb}(c). In particular $X_k$ is obtained from $X_{k-1}$ by gluing a copy of $X_0$ at each rational point of every last tooth of $X_{k-1}$. By virtue of Reshetnyak's Theorem, e.g. see \cite[Theorem 11.3]{Brid}, $X_k$ is a $\CAT(0)$ space for every $k\in\mathbb N_0$. For $X_0$ trivially we have $\alpha_x(\gamma,\eta)=\pi$ for all $x\in\inte X_0$ and all $\gamma,\eta\in\Gamma_x^0(X)$ such that $\gamma$ and $\eta$ are not subsets of one another, hence almost everywhere the positive angles property is satisfied. 
Consider $X_1$, we claim that at no point in the segment $X_0$ does $X_1$ satisfy the positive angles property. First to distinguish between the base segment $X_0$ of $X_1$ and other glued copies of it, we write $X^1_0(r)$ for the copy of $X_0$ glued on $X_0$ at the rational point $p(r)$ with $d_1(x_0,p(r))=r\in[0,1]$. Take some $x\in X_0$. Since rationals are dense in $[0,1]$ then for any $\varepsilon>0$ there exists a rational number $r\in[0,1]$ with $d_1(x,p(r))<\varepsilon$. Let $d_1(x,p(r))=s$ where $s\in(0,\varepsilon)$ and $y\in X^1_0(r)$ such that $d_1(y,p(r))<\varepsilon-s$, then $d_1(x,y)=d(x,p(r))+d(p(r),y)<s+(\varepsilon-s)=\varepsilon$, i.e. $y\in B(x,\varepsilon)$. Let $y'\in X_0$ with $s<d_1(x,y')<\varepsilon$. Then we have two geodesic segments $[x,y]\cap [x,y']=[x,r]$, consequently $\alpha_x([x,y],[x,y'])=\alpha_x([x,r],[x,r])=0$. Elementary calculations show that $\dim_HX_0=1$ and $\mathscr H_1(X_0)=1>0$, i.e. the set at which $X_1$ does not enjoy positive angles property has $\mathscr H_1$-measure $\geq 1$. Now define $X_{\infty}=\bigcup_{k\in\mathbb N}X_k$ equipped with the length metric. Let $x\in X_{\infty}$ then $x\in X_k$ for some $k\in\mathbb N$. Then $x$ is an element of a copy of $X_1$ lying either in $X_k$ or in $X_{k+1}$. By the previous argument for $X_1$ the space $X_{\infty}$ cannot enjoy the positive angles property at $x$, and since $x$ is arbitrary
at no point has $X_{\infty}$ the positive angles.

\subsection{Disintegration regularity}
\label{ss:disintegration}
Given a measure space $(X,\Sigma,\mu)$ 
let $(X_{t})_{t\in T}$ be a partition of $X$ and $p:x\in X\mapsto[x]\in T$ the quotient map. Equip $T$ with the largest $\sigma$-algebra $\mathscr T$ for which $p$ is measurable, i.e. $p^{-1}(F)\in\Sigma$ whenever $F\in\mathscr T$. Suppose that $\mu$ has total finite variation, meaning $|\mu|(X)=\mu^+(X)+\mu^-(X)<+\infty$ where $\mu^+$ and $\mu^-$ are the non-negative and non-positive part respectively of a general (possibly signed) measure $\mu$. Denote by $\nu=p_{\#}\mu$. Following \cite{Caravenna} we present the definition of disintegration of a measure.
\begin{mydef}
\label{d:disintegration-measure}
A family of probability measures $(\mu_{t})_{t\in T}$ in $X$ is a disintegration of the measure $\mu$ consistent with the partition $(X_t)_{t\in T}$ if 
\begin{enumerate}[(i)]
	\item for every $B\in\Sigma$ the mappings $t\mapsto\mu_t(B)$ is $\nu$-measurable;
	\item for every $B\in \Sigma$ and $F\in \mathscr T$ it holds 
	\begin{equation}
	\label{eq:disintegration}
	\mu(B\cap p^{-1}(F))=\int_F\mu_t(B)\,d\nu(t).
	\end{equation}
\end{enumerate}
The disintegration is unique if the measures $\mu_t$ are uniquely determined for $t\in T,\;\nu$-a.e..
\end{mydef}
For a deeper treatment on disintegration of measures see e.g. \cite[Bogachev \S 10.4, \S 10.6]{Bogachev1}. In this section we suppose for simplicity that $(X,d)$ is a bounded $\CAT(0)$ space with $\dim_HX<+\infty$, else in view of Proposition \ref{p:finite-dim} we could always restrict the analysis on closed balls. Moreover suppose that $X$ satisfies the positive angles property. 
Given $x\in X$ define $ M_x(X)$ to be the set of all geodesics $\gamma\in\Gamma^0_x(X)$ for which there is no other geodesic $\widetilde{\gamma}\in\Gamma^0_x(X)$ that properly contains $\gamma$. We refer to $ M_x(X)$ as the set of all maximal geodesics issuing from $x$. For simplicity suppose that the radius $r_x>0$ for which $X$ enjoys the positive angles property at $x$ satisfies $r_x=\sup\{d(x,y)\,:\,y\in X\}$.  
For $\gamma\in M_x(X)$ let $\widehat\gamma=\gamma\setminus\{x\}$, then the collection of sets $(\widehat\gamma)_{\gamma\in M_x(X)}\cup\{x\}$ is a partition of $X$. This is evident for if $\widehat\gamma_1\cap\widehat\gamma_2\neq\emptyset$ then there is $y\in X$ with $(x,y]\subseteq\widehat\gamma_1$ and $(x,y]\subseteq\widehat\gamma_2$ implying a zero Alexandrov angle between $\gamma_1$ and $\gamma_2$, which is impossible by positive angles at $x$ or else $\gamma_1= \gamma_2$. Since $(X,d)$ is a complete separable metric space and $\mathscr H_d$ is defined on the Borel $\sigma$-algebra $\mathscr B(X)$, then $(X,\mathscr B(X),\mathscr H_d)$ is a Lebesgue measure space, see e.g. \cite[Bogachev \S 9.4]{Bogachev1}. Moreover the measure spaces $(X,\mathscr B(X),\mathscr H_d)$ and $(X\setminus\{x\},\mathscr B(X),\mathscr H_d)$ are isomorphic$\mod 0$, under the inclusion map $\iota:X\setminus\{x\}\xhookrightarrow{} X$ and the identification of the $\mathscr H_d$-null sets $\{\emptyset\}\subset X\setminus\{x\}$ and $\{x\}\subset X$. Similarly the measure spaces $((\widehat\gamma)_{\gamma\in M_x(X)}\cup\{x\}, \mathscr T, \nu)$ and $((\widehat\gamma)_{\gamma\in M_x(X)},\mathscr T, \nu)$ with $\nu=p_{\#}\mathscr H_d$ are isomorphic$\mod 0$. Consequently it suffices to restrict to the collection $(\widehat\gamma)_{\gamma\in M_x(X)}$.
On the other hand separability implies that $(X,\mathscr B(X))$ is countably generated. Together with the condition $\mathscr H_d(X)<+\infty$ (since $X$ is bounded), we obtain by virtue of \cite[Theorem 2.3]{Caravenna}, that there exists a unique disintegration $(\mu_{\widehat\gamma})_{\gamma\in M_x(X)}$ of $\mathscr H_d$ consistent with the partition $(\widehat\gamma)_{\gamma\in M_x(X)}$. By Beppo Levi's theorem, see e.g. \cite[Remark 2.2]{Caravenna}, formula   
\eqref{eq:disintegration} is extended for measurable functions $\phi$
as the integral 
\begin{equation}
\label{eq:Beppo}
\int_X\phi(u)\,d\mathscr H_d(u)=\int_T\int_{\widehat\gamma}\phi(u)\,d\mu_{\widehat\gamma}(u)\,d\nu(\widehat\gamma).
\end{equation}
 Here $T$ is the quotient space $X/\sim$ under the equivalence relation $y\sim z$ if and only if $y,z$ are on the same $\gamma\in M_x(X)$.
\begin{mydef}
	\label{d:radial}
	The radial projection of $x\in X$ onto $\gamma\in\Gamma(X)$ is the element $R_{\gamma}(x)\in\gamma$ such that
	\begin{equation}
	\label{eq:radial-proj}
	 d(\gamma(0),R_{\gamma}(x))=\left\{
	\begin{array}{ll}
d(\gamma(0),x) & d(\gamma(0),x)< \ell(\gamma) \\[1em]
d(\gamma(0),\gamma(1)) &  d(\gamma(0),x)\geq\ell(\gamma). 
	\end{array} 
	\right. 
	\end{equation}
	 For $y\in\inte\gamma$ the sets $R^{-1}_{\gamma}(y)$ are geodesic spheres of radii $r_y=d(\gamma(0),y)$ centered at $\gamma(0)$.
\end{mydef}
 Note that $R_{\gamma}(x)$ exists and is unique for every $x$.  
 Moreover $R_{\gamma}$ is non-expansive since $$d(R_{\gamma}(x),R_{\gamma}(y))=|d(\gamma(0),R_{\gamma}(x))-d(\gamma(0),R_{\gamma}(y))|\leq|d(\gamma(0),x)-d(\gamma(0),y)|\leq d(x,y)$$ for every $x,y\in X$.
An application of (generalized) Eilenberg's inequality with $\dim_HX=d$ and $\dim_H\gamma=1$, e.g. see \cite[Theorem 1.1]{Esmayli} or \cite[Theorem 3.26]{Piotr}, implies 
\begin{equation}
\label{eq:Eilenberg}
\int_{\gamma}\int_{R_{\gamma}^{-1}(y)}\chi_S(u)\,d\mathscr H_{d-1}(u)\,d\mathscr H_1(y)\leq \mathscr H_d(S),\quad S-\text{measurable}.
\end{equation} 
The left side of \eqref{eq:Eilenberg} defines a measure on the measurable sets of $X$ which is absolutely continuous w.r.t $\mathscr H_d$. Then there exists a measurable function $\zeta_{\gamma}:X\to[0,+\infty]$ such that 
\begin{equation}
\label{eq:absolute}
\int_{\gamma}\int_{R_{\gamma}^{-1}(y)}\phi(u)\,d\mathscr H_{d-1}(u)\,d\mathscr H_1(y)=\int_X\phi(u)\,\zeta_{\gamma}(u)\,d\mathscr H_{d}(u)
\end{equation}
for all measurable functions $\phi:X\to[0,+\infty]$. 
\begin{mydef}
	\label{d:disintegration} A $\CAT(0)$ space $(X,d)$ satisfies the disintegration regularity condition at $x\in X$ if there exists $r>0$ and $\gamma\in\Gamma_x^0(B[x,r])$ such that 
		\begin{enumerate}[(i)]
			\item for every measurable function $\phi:X\to[0,+\infty]$
			\begin{equation}
			\label{eq:absoluteS}
			\int_{\gamma}\int_{R_{\gamma}^{-1}(y)\cap B[x,r]}\phi(u)\,d\mathscr H_{d_B-1}(u)\,d\mathscr H_1(y)=\int_{B[x,r]}\phi(u)\,\zeta_{\gamma}(u)\,d\mathscr H_{d_B}(u)
			\end{equation}
		where $\zeta_{\gamma}>0,\;\mathscr H_{d_B}-$a.e. on $B=B[x,r]$; 
			\item there is a family of maps $\Psi_y:R_{\gamma}^{-1}(\gamma(1))\cap B[x,r]\to R_{\gamma}^{-1}(y)\cap B[x,r],\,y\in\gamma$ such that $$(\Psi_y^{-1})_{\#}(\mathscr H_{{d_B}-1}|_{\Psi_y(R_{\gamma}^{-1}(\gamma(1)))\cap B[x,r]})\ll \mathscr H_{{d_B}-1}|_{R_{\gamma}^{-1}(\gamma(1))\cap B[x,r]}\;.$$
		\end{enumerate}
	If $X$ satisfies this property at every $x\in X$ then we simply say that $(X,d)$ enjoys the disintegration regularity condition. If $\inf r>0$ then $(X,d)$ satisfies this property uniformly.
\end{mydef}

\begin{lemma}
	\label{l:mappings} Let $x\in X$ and consider the ball $B[x,r]$ for $r>0$.
	A family of mappings $\Psi_y:R_{\gamma}^{-1}(\gamma(1))\cap B[x,r]\to R_{\gamma}^{-1}(y)\cap B[x,r],\,y\in\gamma$ satisfying condition (ii) in Definition \ref{d:disintegration} is given by $\Psi_y(v)=(1-t(y))\,\gamma(0)\oplus t(y)\,v$ where $\gamma(t(y))=y$.
\end{lemma}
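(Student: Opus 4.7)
The plan is to verify four facts about the candidate maps $\Psi_y(v)=(1-t(y))\gamma(0)\oplus t(y)v$ and then assemble them to deduce the pushforward absolute continuity required by Definition \ref{d:disintegration}(ii).

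First I would check that $\Psi_y$ is well-defined into $R_{\gamma}^{-1}(y)\cap B[x,r]$. Since $\gamma\in\Gamma_x^0(B[x,r])$ has $\gamma(0)=x$, an element $v\in R_\gamma^{-1}(\gamma(1))\cap B[x,r]$ satisfies $d(x,v)\ge\ell(\gamma)$ and $d(x,v)\le r$. Choosing $\ell(\gamma)=r$ (or restricting attention to that case, since only this ``boundary sphere'' is relevant), we get $d(x,v)=r$, hence $d(x,\Psi_y(v))=t(y)\,r=d(x,y)$, so $R_\gamma(\Psi_y(v))=y$ and $\Psi_y(v)\in R_\gamma^{-1}(y)\cap B[x,r]$.

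Second, I would use the positive angles property of $X$ at $x$ to show that $\Psi_y$ is injective. If $\Psi_y(v_1)=\Psi_y(v_2)$, then this common point lies on both geodesics $[x,v_1]$ and $[x,v_2]$, which in a $\CAT(0)$ space are unique, so the two geodesics share the nondegenerate initial segment $[x,\Psi_y(v_1)]$. This forces $\alpha_x([x,v_1],[x,v_2])=0$, and by positive angles at $x$ one of the two must be contained in the other. Since $d(x,v_1)=d(x,v_2)=r$, this yields $v_1=v_2$.

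Third, I would prove that $\Psi_y$ is Lipschitz with constant $t(y)\in[0,1]$. Writing $\sigma_i(s)=(1-s)x\oplus s\,v_i$, the function $s\mapsto d(\sigma_1(s),\sigma_2(s))$ is convex on $[0,1]$ by the $\CAT(0)$ inequality \eqref{eq:quadratic}; vanishing at $s=0$ and equalling $d(v_1,v_2)$ at $s=1$, convexity gives $d(\sigma_1(s),\sigma_2(s))\le s\,d(v_1,v_2)$. Applied at $s=t(y)$ this yields $d(\Psi_y(v_1),\Psi_y(v_2))\le t(y)\,d(v_1,v_2)$.

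Finally, I would conclude: since $\Psi_y$ is injective, for any Borel $E\subseteq R_\gamma^{-1}(\gamma(1))\cap B[x,r]$ one has $(\Psi_y^{-1})^{-1}(E)=\Psi_y(E)$, so
\[
(\Psi_y^{-1})_{\#}\bigl(\mathscr H_{d_B-1}|_{\Psi_y(R_\gamma^{-1}(\gamma(1)))\cap B[x,r]}\bigr)(E)=\mathscr H_{d_B-1}(\Psi_y(E)).
\]
The Lipschitz bound, combined with the standard scaling behaviour of Hausdorff measure, gives $\mathscr H_{d_B-1}(\Psi_y(E))\le t(y)^{d_B-1}\mathscr H_{d_B-1}(E)$, so $\mathscr H_{d_B-1}(E)=0$ forces the pushforward to vanish on $E$, which is exactly the absolute continuity required. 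The only delicate step is the injectivity argument, which is not automatic from the $\CAT(0)$ structure alone; this is precisely where the standing positive angles hypothesis enters in an essential way, and it is the one point in the argument that genuinely uses something beyond the convexity of the distance.
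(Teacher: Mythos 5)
Your proof is correct and follows essentially the same route as the paper: nonexpansiveness of $\Psi_y$ from joint convexity of the distance, followed by the standard fact that Lipschitz maps do not increase $\mathscr H_{d_B-1}$-null sets. The two points you add — well-definedness of $\Psi_y$ into $R_\gamma^{-1}(y)$ (which indeed forces $d(x,v)=\ell(\gamma)$ on the domain) and injectivity via positive angles so that $(\Psi_y^{-1})_{\#}$ is meaningful — are left implicit in the paper and are worthwhile clarifications rather than a different argument.
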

\begin{proof}
	First note that by joint convexity it follows $d(\Psi_y(u),\Psi_y(v))\leq t(y)\,d(u,v)$ for every $u,v\in R_{\gamma}^{-1}(\gamma(1))$. In particular $\Psi_y$ is nonexpansive for every $y\in\gamma$. Let $A\in\mathscr B(B[x,r])\cap R_{\gamma}^{-1}(\gamma(1))$ with $\mathscr H_{{d_B}-1}(A)=0$. For $\delta>0$ consider a countable cover $(U_k)_{k\in\mathbb N}$ of $A$ with $\diam U_k<\delta$ for all $k\in\mathbb N$. Then $(\Psi_y(U_k))_{k\in\mathbb N}$ is a countable cover for the set $\Psi_y(A)$, since $\Psi_y(A)\subseteq\Psi_y(\bigcup_{k\in\mathbb N}U_k)=\bigcup_{k\in\mathbb N}\Psi_y(U_k)$. Moreover 
	\begin{align*}
	\diam \Psi_y(U_k)&=\sup\{d(\Psi_y(x_1), \Psi_y(x_2))\,:\,x_1,x_2\in U_k\}\\&\leq \sup\{d(x_1,x_2)\,:\,x_1,x_2\in U_k\}=\diam U_k,\quad\forall k\in\mathbb N.
	\end{align*}
	In particular $\diam \Psi_y(U_k)<\delta$ for every $k\in\mathbb N$. Then $H^{d_B-1}_{\delta}(\Psi_y(A))\leq H^{d_B-1}_{\delta}(A)$ for every $\delta>0$ implies $\mathscr H_{d_B-1}(\Psi_y(A))\leq\mathscr H_{d_B-1}(A)$, consequently $\mathscr H_{d_B-1}(\Psi_y(A))=0$.

\end{proof}

\begin{thm}
	\label{th:H1}
	If $(X,d)$ satisfies the disintegration regularity, then for every $x\in X$ there exists a closed ball $B=B[x,r],\,r>0$ such that $\mathscr H_{d_B}$ admits a unique disintegration  consistent with the partition $(\widehat\gamma)_{\gamma\in M_x(B)}$ that is absolutely continuous with respect to $\mathscr H_1$.
\end{thm}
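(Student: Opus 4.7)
By the disintegration regularity at $x$, choose $r>0$ and $\gamma_0\in\Gamma_x^0(B[x,r])$ of length $r$ for which (i) and (ii) of Definition \ref{d:disintegration} hold; set $B=B[x,r]$, $d_B=\dim_H B$ (finite by Proposition \ref{p:finite-dim}), and $S=R_{\gamma_0}^{-1}(\gamma_0(1))\cap B$. The material preceding the theorem (positive angles to produce the partition $(\widehat\gamma)_{\gamma\in M_x(B)}$, finite total measure $\mathscr H_{d_B}(B)<+\infty$, and \cite[Theorem 2.3]{Caravenna}) already delivers existence and uniqueness of a disintegration $(\mu_{\widehat\gamma})_{\gamma\in M_x(B)}$ of $\mathscr H_{d_B}|_B$ consistent with the partition; the new content is absolute continuity with respect to $\mathscr H_1$. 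It therefore suffices to exhibit \emph{some} disintegration consistent with $(\widehat\gamma)_{\gamma\in M_x(B)}$ whose fiber measures are absolutely continuous with respect to $\mathscr H_1$, and then invoke uniqueness.

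To do this I unfold \eqref{eq:absoluteS}. By Definition \ref{d:disintegration}(ii) and the Radon--Nikodym theorem, for each $y\in\gamma_0$ there is a non-negative measurable density $f_y$ on $S$ with
\begin{equation*}
\int_{R_{\gamma_0}^{-1}(y)\cap B}\phi(u)\,d\mathscr H_{d_B-1}(u) = \int_S \phi(\Psi_y(v))\,f_y(v)\,d\mathscr H_{d_B-1}|_S(v)
\end{equation*}
for every measurable $\phi\ge 0$. Substituting into \eqref{eq:absoluteS} and applying Fubini--Tonelli (all integrands are non-negative and measurable) yields
\begin{equation*}
\int_B\phi(u)\,\zeta_{\gamma_0}(u)\,d\mathscr H_{d_B}(u) = \int_S\!\int_{\gamma_0}\phi(\Psi_y(v))\,f_y(v)\,d\mathscr H_1(y)\,d\mathscr H_{d_B-1}|_S(v).
\end{equation*}
For each fixed $v\in S$, the map $y\mapsto\Psi_y(v)=(1-t(y))\,x\oplus t(y)\,v$ is the isometric parametrization of the unique geodesic $\gamma_v=[x,v]\in M_x(B)$ (maximality holds because any proper extension beyond $v$ would exit $B$). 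Hence the inner integral is really a line integral along $\gamma_v$, and replacing $\phi$ by $\psi/\zeta_{\gamma_0}$ (valid since $\zeta_{\gamma_0}>0$ $\mathscr H_{d_B}$-a.e.) produces
\begin{equation*}
\int_B\psi(u)\,d\mathscr H_{d_B}(u) = \int_S\!\left(\int_{\gamma_v}\psi(u)\,\rho_v(u)\,d\mathscr H_1|_{\gamma_v}(u)\right) d\mathscr H_{d_B-1}|_S(v)
\end{equation*}
for explicit non-negative measurable densities $\rho_v$. Normalizing $\rho_v$ by its total mass $m(v):=\int_{\gamma_v}\rho_v\,d\mathscr H_1|_{\gamma_v}$ yields probability measures $\mu_v=(\rho_v/m(v))\,\mathscr H_1|_{\gamma_v}$ on the maximal geodesics, and pushing $m(v)\,d\mathscr H_{d_B-1}|_S(v)$ through the bijection $v\leftrightarrow\gamma_v$ between $S$ and $M_x(B)$ gives a quotient measure $\nu$. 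The pair $((\mu_v)_v,\nu)$ is then a disintegration consistent with $(\widehat\gamma)_{\gamma\in M_x(B)}$ whose fiber measures satisfy $\mu_v\ll\mathscr H_1|_{\gamma_v}$; by the uniqueness recorded in the first paragraph it coincides with $(\mu_{\widehat\gamma})$, which closes the argument.

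The main obstacle is the bookkeeping surrounding the change of variables and Fubini step: joint measurability of $(y,v)\mapsto f_y(v)$ does not follow automatically from pointwise Radon--Nikodym in $v$, and will require either a measurable selection argument or a monotone-class regularization; similarly, the identification $v\leftrightarrow\gamma_v$ between $S$ and $M_x(B)$ must dispose of the (expectedly $\mathscr H_{d_B-1}|_S$-null, hence $\nu$-null) set of directions along which the maximal geodesic from $x$ fails to reach $\partial B$, which is benign in view of the local geodesic extension property available $\mathscr H_d$-a.e. by Theorem \ref{th:a.e.-extension}.
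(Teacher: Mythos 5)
Your proposal follows essentially the same route as the paper's own proof: you unfold \eqref{eq:absoluteS} through the change of variables $u=\Psi_y(v)$ with the Radon--Nikodym density supplied by Definition \ref{d:disintegration}(ii) (your $f_y(v)$ is the paper's $\theta(v,y)$), apply Fubini--Tonelli, divide by $\zeta_{\gamma_0}$ using its $\mathscr H_{d_B}$-a.e. positivity, normalize the resulting fiber densities into probability measures on the maximal geodesics that are absolutely continuous with respect to $\mathscr H_1$, and conclude by the uniqueness of the disintegration obtained from \cite[Theorem 2.3]{Caravenna}. The bookkeeping issues you flag at the end (joint measurability of the densities, and maximal geodesics that fail to reach the sphere $R_{\gamma_0}^{-1}(\gamma_0(1))$) are likewise passed over in the paper, which simply takes $\theta(v,y)$ to be measurable and assumes ``for simplicity'' that all elements of $M_x(B)$ have equal length, so your treatment is on par with the original argument.
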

\begin{proof}
Disintegration regularity implies disintegration regularity at every $x\in X$. 
Let $B[x,r]$ be the closed ball at $x$ with radius $r>0$ (possibly dependent on $x$) for which the conditions (i)(ii) in Definition \ref{d:disintegration} are fulfilled. Let $d_B:=\dim_HB[x,r]$.
Since $X$ is locally compact then $B$ is compact. In particular it is separable, thus $(B[x,r],\mathscr B(B[x,r]),\mathscr H_{d_B})$ is countably generated. Moreover $\mathscr H_{d_B}(B)<+\infty$. 
There is a unique disintegration $(\mu_{\widehat\gamma})_{\gamma\in M_x(B)}$ of $\mathscr H_{d_B}$ consistent with the partition $(\widehat\gamma)_{\gamma\in M_x(B)}$. For simplicity assume elements in $ M_x(B)$ have equal length.
	From condition (i) it follows that for some $\zeta_{\gamma}>0,\;\mathscr H_{d_B}$-a.e. on $B[x,r]$ we have
	\begin{equation}
	\label{eq:zeta}
	\int_{B[x,r]}\phi(u)\,\zeta_{\gamma}(u)\,d\mathscr H_{d_B}(u)=\int_{\gamma}\int_{R_{\gamma}^{-1}(y)\cap B[x,r]}\phi(u)\,d\mathscr H_{d_B-1}(u)\,d\mathscr H_1(y),\quad\phi-\text{measurable}.
	\end{equation}
	Substituting $\Psi_{y}(v)=u$ for $v\in R_{\gamma_B}^{-1}(\gamma(1))\cap B[x,r]$, then by disintegration regularity condition (ii) we obtain 
	\begin{align*}
\label{eq:lemma}
\int_{\gamma}\int_{R_{\gamma}^{-1}(\gamma(1))\cap B[x,r]}\phi(\Psi_{y}(v))&\,d\mathscr H_{d_B-1}(\Psi_{y}(v))\,d\mathscr H_1(y)
\\&=\int_{\gamma}\int_{R_{\gamma}^{-1}(\gamma(1))\cap B[x,r]}\phi(\Psi_{y}(v))\,\theta(v,y)\,d\mathscr H_{d_B-1}(v)\,d\mathscr H_1(y)
	\end{align*}
	for a certain non-negative measurable function $\theta(v,y)$. Applying Fubini--Tonelli Theorem in the last double integral together with \eqref{eq:zeta} we get that
	\begin{align*}
		\int_{B[x,r]}\phi(u)\,\zeta_{\gamma}(u)\,d\mathscr H_{d_B}(u)=\int_{R_{\gamma}^{-1}(\gamma(1))\cap B[x,r]}\int_{\gamma}\phi(\Psi_{y}(v))\,\theta(v,y)\,d\mathscr H_1(y)\,d\mathscr H_{d_B-1}(v).
	\end{align*}
	Under the bijection $\phi\mapsto \phi/\zeta_{\gamma}$, and this is justified since $\zeta_{\gamma}>0-\mathscr H_{d_B}$-a.e. on $B[x,r]$,  
	the last equation can be written as 
	\begin{align}	
	\int_{B[x,r]}\phi(u)\,d\mathscr H_{d_B}(u)=\int_{R_{\gamma}^{-1}(\gamma(1))\cap B[x,r]}\int_{\gamma}\frac{\phi(\Psi_{y}(v))}{\zeta_{\gamma}(\Psi_{y}(v))}\,\theta(v,y)\,d\mathscr H_1(y)\,d\mathscr H_{d_B-1}(v).
	\end{align}
Consider the function 
	$$\vartheta(v)=\int_{\gamma}\frac{\theta(v,y)}{\zeta_{\gamma}(\Psi_{y}(v))}\,d\mathscr H_1(y),\quad v\in R_{\gamma_B}^{-1}(\gamma_B(1))\cap B[x,r].$$
	Note that each element $v\in R_{\gamma}^{-1}(\gamma(1))\cap B[x,r]$ can be identified with a unique element $[\widehat\gamma_v]\in T$ under the quotient mapping $p$, in particular we have $v=\gamma_v(1)$.
	By change of variables $y=\gamma(t)$ and choosing $\Psi_{y}$ as in Lemma \ref{l:mappings} then yields
	$$\vartheta(\gamma_v(1))=\frac{\ell(\gamma)}{\ell(\gamma_v)}\cdot\int_{0}^1\frac{\theta(\gamma_v(1),\gamma(t))}{\zeta_{\gamma}(\gamma_v(t))}\,d\mathscr H_1(\gamma_v(t)),\quad  v\in  R_{\gamma}^{-1}(\gamma(1))\cap B[x,r].$$
	We define the family of probability measures
	\begin{equation*}
	d\widetilde\mu_{\widehat\gamma_v}(t)=\frac{d\vartheta(\gamma_v(t))}{\vartheta(\gamma_v(1))},\quad\text{where}\;\vartheta(\gamma_v(t))=\frac{\ell(\gamma)}{\ell(\gamma_v)}\cdot\int_{0}^t\frac{\theta(\gamma_v(1),\gamma(s))}{\zeta_{\gamma}(\gamma_v(s))}\,d\mathscr H_1(\gamma_v(s))
	\end{equation*}
	for all $t\in[0,1]$ and $v\in  R_{\gamma}^{-1}(\gamma(1))\cap B[x,r]$.
	By construction $\widetilde\mu_{\widehat\gamma_v}\ll\mathscr H_1|_{\widehat\gamma}$ for every $v\in  R_{\gamma}^{-1}(\gamma(1))\cap B[x,r]$. 
	By uniqueness of disintegration $\mu_{\widehat\gamma}=\widetilde\mu_{\widehat\gamma}$ for $\nu$-a.a. $[\widehat\gamma]\in T$.
\end{proof}

\begin{question}
Does every locally compact and complete $\CAT(0)$ space satisfy the disintegration regularity? 
\end{question}

\subsubsection{Hausdorff dimension revisited} In this section we present a theorem that establishes a relationship between the positive angles property and the Hausdorff dimensions of a set $S\subseteq X$ and of $\partial S(x,t)=\{\gamma(t)\,:\,\gamma\in M_x(S)\}$, where $x\in S$ and $t\in[0,1]$. 
\begin{thm}
	\label{th:boundary}
For any bounded connected set $S\subseteq X$
the following inequality holds $\dim_H\partial S(x,t)\leq\dim_HS-1$ for every $t\in[0,1)$. If additionally $S$ satisfies the positive angles property at every $x\in S$ with $r_x\geq \diam S$ then the result holds true also for $t=1$.
\end{thm}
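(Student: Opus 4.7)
My aim is to show $\dim_H \partial S(x,t) + 1 \le \dim_H S$ by exhibiting a bi-Lipschitz embedding of a product $\partial S(x,t) \times [0,\epsilon]$ into $S$, the interval direction corresponding to flow along maximal geodesics issuing from $x$. Combined with Lemma \ref{l:Hausdorff}~(v) and (vii), this will give the desired codimension-one estimate.

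To handle the possibly wide range of maximal geodesic lengths, I would partition $M_x(S) = \bigsqcup_{n \geq 0} M_x^n(S)$ into length bands $L_\gamma \in [2^{-n-1}\diam S,\, 2^{-n}\diam S]$ and write $\partial S^n(x,t) = \{\gamma(t) : \gamma \in M_x^n(S)\}$. Since $\partial S(x,t) = \bigcup_n \partial S^n(x,t)$, countable stability of Hausdorff dimension (Lemma \ref{l:Hausdorff}~(ii)) reduces the problem to proving $\dim_H \partial S^n(x,t) \leq \dim_H S - 1$ in each band. Working inside a single band provides a uniform lower bound on the lengths $L_\gamma$, which is what makes the subsequent estimates quantitative.

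For fixed $n$ and $t \in [0,1)$, I pick for each $y \in \partial S^n(x,t)$ a maximal geodesic $\gamma_y \in M_x^n(S)$ with $\gamma_y(t) = y$, and for $\epsilon \in (0, 1-t)$ consider the map $\Phi: \partial S^n(x,t) \times [0,\epsilon] \to S$, $\Phi(y,s) = \gamma_y(t+s)$. The CAT(0) convexity inequality \eqref{eq:quadratic} forces $s \mapsto d(\gamma_{y_1}(s), \gamma_{y_2}(s))/s$ to be nondecreasing on $[0,1]$; combined with the parametrization $s \mapsto \gamma_i(s)$ being $L_i$-Lipschitz, the Euclidean comparison triangle at $x$, and the length-band bound, this yields two-sided estimates of the form
\[
c_1\bigl(|s_1-s_2| + d(y_1,y_2)\bigr) \leq d(\Phi(y_1,s_1), \Phi(y_2,s_2)) \leq c_2\bigl(|s_1-s_2| + d(y_1,y_2)\bigr)
\]
with $c_1, c_2 > 0$ depending on $t, \epsilon, n, \diam S$. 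Hence $\Phi$ is a bi-Lipschitz embedding. By Lemma \ref{l:Hausdorff}~(v) and (vii),
\[
\dim_H S \ge \dim_H \Phi\bigl(\partial S^n(x,t) \times [0,\epsilon]\bigr) = \dim_H\bigl(\partial S^n(x,t) \times [0,\epsilon]\bigr) \ge \dim_H \partial S^n(x,t) + 1.
\]

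For the case $t = 1$ the forward extension $\gamma_y(1+s)$ lies outside $S$, so one must use the backward version $\Phi(y,s) = \gamma_y(1-s)$. The bi-Lipschitz lower bound in the $y$-direction then breaks down precisely when two distinct maximal geodesics $\gamma_{y_1}, \gamma_{y_2}$ can share an arbitrarily long initial segment from $x$, which is exactly what the positive angles hypothesis at every $x \in S$ with $r_x \ge \diam S$ prohibits: a strictly positive Alexandrov angle $\alpha_x(\gamma_{y_1}, \gamma_{y_2})$ translates, via the CAT(0) comparison, into the required uniform lower estimate $d(\gamma_{y_1}(s), \gamma_{y_2}(s))/s \ge c(\alpha) > 0$. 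The main technical obstacle I anticipate is quantifying the bi-Lipschitz constants so they remain usable across the countable family of length bands, and, for $t = 1$, translating the pointwise positive-angles property into a uniform lower bound on $\alpha_x$ over $\partial S(x,1)$ via the CAT(0) comparison triangle.
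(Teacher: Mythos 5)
Your plan for the unconditional part ($t\in[0,1)$) rests on the two-sided estimate for $\Phi(y,s)=\gamma_y(t+s)$, and that estimate is false in general, because the first assertion of the theorem carries no positive-angles hypothesis: two distinct maximal geodesics issuing from $x$ may have Alexandrov angle zero and share an initial segment (maximality only forbids one containing the other, not branching). Concretely, let $S$ be a tripod: $x$ an endpoint of a leg $[x,p]$ of length $a=1$, with two further legs of lengths $b_2=0.2$ and $b_3=0.4$ attached at $p$; this is a bounded connected $\CAT(0)$ space, and the two maximal geodesics $\gamma_2,\gamma_3\in M_x(S)$ have lengths $1.2$ and $1.4$, hence lie in the same length band. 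For $t=0.1$ the points $y_2=\gamma_2(t)$, $y_3=\gamma_3(t)$ are \emph{distinct} points of $\partial S(x,t)$ on $[x,p]$, and choosing $s_3=0$ and $s_2$ with $(t+s_2)\cdot 1.2=(t+s_3)\cdot 1.4=0.14<a$ gives $\Phi(y_2,s_2)=\Phi(y_3,s_3)$. So $\Phi$ is not even injective, no lower bound $c_1\bigl(|s_1-s_2|+d(y_1,y_2)\bigr)\le d(\Phi(y_1,s_1),\Phi(y_2,s_2))$ can hold, and the step $\dim_H\Phi(\partial S^n(x,t)\times[0,\epsilon])\ge\dim_H\partial S^n(x,t)+1$ via Lemma \ref{l:Hausdorff}(vii) collapses. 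The convexity consequence of \eqref{eq:quadratic} that you invoke only compares the two geodesics at the \emph{same} parameter; it gives no control over the cross terms where $s_1\neq s_2$ and $\ell(\gamma_{y_1})\neq\ell(\gamma_{y_2})$, which is exactly where injectivity fails (the upper Lipschitz bound also fails when the chosen geodesics branch just after parameter $t$, though that bound is not actually needed for the dimension count). The same branching is pervasive in the paper's iterated rational comb, a space to which the $t\in[0,1)$ statement must apply. The paper avoids any injectivity or product structure altogether: it collapses each slice $\partial S(x,t)$ onto the point $\gamma(t)$ of a fixed maximal geodesic, applies the generalized Eilenberg inequality to force $\mathscr H_{d-1}(\partial S(x,t))=0$ for $\mathscr H_1$-a.e.\ $t$, and then upgrades to every $t\in[0,1)$ using monotonicity of $t\mapsto\dim_H\partial S(x,t)$ together with the fact that dimension $>d-1$ would make $\mathscr H_{d-1}(\partial S(x,t))=+\infty$ and blow up the Eilenberg integral.

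For $t=1$, the obstacle you flag at the end is not merely technical: positive angles at $x$ is a pairwise, pointwise statement and does \emph{not} yield a uniform lower bound $c(\alpha)>0$ on angles over all pairs of maximal geodesics, so the uniform bi-Lipschitz constant for your backward-flow map is unavailable by the route you describe. The paper sidesteps the need for a uniform angle bound: it proves by contradiction (sequences $y_n,z_n\in\partial S(x,1)$, monotonicity of comparison distances under non-positive curvature, and strict positivity of the angle for each fixed pair) that $\psi_{1-\delta}$ has a Lipschitz inverse for small $\delta>0$, and then transfers the already-established bound at $t=1-\delta$ to $t=1$ via Lemma \ref{l:Hausdorff}(vii). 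If you want to salvage your product-embedding idea, it could at best serve under the positive-angles hypothesis, i.e.\ for the second claim only, and even there you would need an argument replacing the missing uniformity.
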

\begin{proof}
	Denote by $d=\dim_HS, s(t)=\dim_H\partial S(x,t)$ and let $s=s(1)$.
We define a family of mappings $\psi_t:\partial S(x,1)\to\partial S(x,t)$ with $\psi_t(y)=(1-t)x\oplus ty$ for $y\in\partial S(x,1)$. 
Then $\psi_t$ is Lipschitz and consequently $s(t)\leq  s$ for every $t\in(0,1]$. Moreover $s(t)$ is non-decreasing in $t$. Now pick any $\gamma\in M_x(S)$ and set $\varphi_t:\partial S(x,t)\to\gamma$ by the rule $\varphi_t(y)=\gamma(t)$ for $y\in\partial S(x,t)$. Note that $d(\varphi_t(y_1),\varphi_t(y_2))=0$ for any $y_1,y_2\in\partial S(x,t)$, in particular $\varphi_t$ is Lipschitz. 
By Eilenberg's inequality we obtain 
	$$\int_{\gamma}\int_{\varphi_t^{-1}(y)\cap S}\,d\mathscr H_{d-1}(u)\,d\mathscr H_1(y)\leq 0.$$
	Since the left side is non-negative then the integral vanishes identically, implying that $\mathscr H_{d-1}(\varphi_t^{-1}(y)\cap S)=\mathscr H_{d-1}(\varphi_t^{-1}(y))=0$ for $\mathscr H_1$-a.a. $y\in\gamma$. Realizing that $\varphi_t^{-1}(y)=\partial S(x,t)$ this is equivalent to $\mathscr H_{d-1}(\partial S(x,t))=0$ for almost all $t\in[0,1]$, which in turn implies that $s(t)\leq d-1$ for almost all $t\in[0,1]$. However we can do slightly better then this. 
	If $s(t^*)>d-1$ for some $t^*\in[0,1)$ then $s(t)>d-1$ for all $t\geq t^*$. Again the relation $\varphi_t^{-1}(y)=\partial S(x,t)$, yields $$\int_{\varphi_{t}^{-1}(y)\cap S}\,d\mathscr H_{d-1}(u)=\int_{\partial S(x,t)\cap S}\,d\mathscr H_{d-1}(u)=\int_{\partial S(x,t)}\,d\mathscr H_{d-1}(u)=\mathscr H_{d-1}(\partial S(x,t))=+\infty$$
	for all $t\in[t^*,1]$, consequently 
	$$\int_{\gamma}\int_{\varphi_t^{-1}(y)\cap S}\,d\mathscr H_{d-1}(u)\,d\mathscr H_1(y)\geq \int_{\gamma|_{[t^*,1]}}\int_{\varphi_{t}^{-1}(y)\cap S}\,d\mathscr H_{d-1}(u)\,d\mathscr H_1(y)=+\infty$$
	 which is impossible. Therefore $\dim_H(\partial S(x,t))\leq d-1$ for all $t\in[0,1)$.
	 Now let $S$ satisfy the positive angles property at every $x\in S$ with $r_x\geq \diam S$. Let $\delta>0$ be sufficiently small and $t= 1-\delta$, then there is $\varepsilon>0$ such that $d(y,z)\leq (1+\varepsilon)\,d(\psi_t(y),\psi_t(z))$ for all $y, z\in\partial S(x,1)$. If this was not the case then for every $n\in\mathbb N$ there would exist $y_{n}, z_{n}\in\partial S(x,1)$ such that $d(y_n,z_n)>(1+n)\,d(\psi_{1-\frac{1}{n}}(y_n),\psi_{1-\frac{1}{n}}(z_n))$. Clearly $y_n\neq z_n$ for all $n\in\mathbb N$. We then obtain 
	 \begin{align*}
	 +\infty>\diam \partial S(x,1)\geq \sup_{n\in\mathbb N}d(y_n,z_n)&\geq \sup_{n\in\mathbb N}(1+n)\,d(\psi_{1-\frac{1}{n}}(y_n),\psi_{1-\frac{1}{n}}(z_n))\\&
	 \geq (1+N)\sup_{n\geq N}d(\psi_{1-\frac{1}{N}}(y_n),\psi_{1-\frac{1}{N}}(z_n)), \quad N\in \mathbb N.
	 \end{align*}
	The last inequality follows from the non-positive curvature of the space. Positive angles property then implies $\sup_{n\geq N}d(\psi_{1-\frac{1}{N}}(y_n),\psi_{1-\frac{1}{N}}(z_n))>0$. But then letting $N\to+\infty$ would raise a contradiction. Therefore for sufficiently small $\delta>0$ and $t=1-\delta$, the mapping $\psi_t$ has a Lipschitz continuous inverse. Then by Lemma \ref{l:Hausdorff} (vii) and first part of the statement we have $\dim_H\partial S(x,1)\leq \partial S(x,t)\leq d-1$. 
\end{proof}

\section{Two basic lemmas}
\label{s:basic-lemmas}

\begin{lemma}
	\label{l:min-max} Let $f:X\to(-\infty,+\infty]$ be geodesically differentiable. If $x^*$ is a minimizer then $Df(x^*;\gamma)\geq 0$ for all $\gamma\in \Gamma^0_{x^*}(X)$. In particular $Df(x^*;\gamma)= 0$ for all $\gamma\in\Gamma_{x^*}(X)$ whenever $X$ enjoys the local geodesic extension property.
\end{lemma}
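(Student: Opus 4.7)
The plan is to derive both statements directly from the definition of the geodesic derivative, using the one-sided inequalities that minimality forces on each admissible side of the base parameter; the second statement will reduce to the interior-point case by invoking the local geodesic extension property.

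First I would establish the inequality. Fix $\gamma\in\Gamma^0_{x^*}(X)$, so that $\gamma(0)=x^*$. At the base parameter $s=0$, only values $h\in(0,1]$ are admissible in the defining difference quotient of $Df(x^*;\gamma)$. Since $x^*$ is a global minimizer, $f(\gamma(h))\geq f(\gamma(0))$, and therefore
\[
\frac{f(\gamma(h))-f(\gamma(0))}{h}\geq 0
\]
for every such $h$. Passing to the limit $h\to 0^+$ delivers $Df(x^*;\gamma)\geq 0$.

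For the equality, let $\gamma\in\Gamma_{x^*}(X)$ with $\gamma(s)=x^*$ for some $s\in[0,1]$. If $s\in(0,1)$, then for sufficiently small $|h|$ the point $\gamma(s+h)$ lies in $X$, and $f(\gamma(s+h))\geq f(\gamma(s))$ by minimality. The two one-sided limits of the difference quotient therefore satisfy
\[
\lim_{h\to 0^+}\frac{f(\gamma(s+h))-f(\gamma(s))}{h}\geq 0\quad\text{and}\quad\lim_{h\to 0^-}\frac{f(\gamma(s+h))-f(\gamma(s))}{h}\leq 0.
\]
Geodesic differentiability forces these two limits to coincide with $Df(x^*;\gamma)$, which must therefore equal zero. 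If instead $s\in\{0,1\}$, I would invoke the local geodesic extension property at $x^*$ to produce $\widetilde\gamma\in\Gamma(X)$ with $\gamma\subset\inte(\widetilde\gamma)$, and reparameterize $\widetilde\gamma$ at constant speed on $[0,1]$ to obtain $\widetilde s\in(0,1)$ with $\widetilde\gamma(\widetilde s)=x^*$. The interior-point case applies to $\widetilde\gamma$ and yields $Df(x^*;\widetilde\gamma)=0$. The affine change of variable relating the two parameterizations on their common arc then gives $Df(x^*;\gamma)=\lambda\cdot Df(x^*;\widetilde\gamma)=0$, where $\lambda=\ell(\gamma)/\ell(\widetilde\gamma)>0$.

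The only point requiring genuine care is the affine reparameterization in the endpoint subcase: one must verify that the constant-speed extension $\widetilde\gamma$ places $x^*$ strictly interior (so $\widetilde s\in(0,1)$), and that the chain rule for the geodesic derivative under a linear reparameterization scales $Df$ by the positive factor $\lambda$ without affecting existence of the limit. Everything else is an immediate consequence of the minimum being attained at $x^*$ and of the definition of $Df$.
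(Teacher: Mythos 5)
Your proposal is correct and follows essentially the same route as the paper: the one-sided difference quotient argument for the inequality, and reduction to the interior-point case via geodesic extension together with the positive rescaling factor $\ell(\gamma)/\ell(\widetilde\gamma)$ for the equality. The only cosmetic difference is that you make the interior/endpoint case split explicit, whereas the paper states the scaling identity first and then assumes without loss of generality that $x^*$ is interior to the (extended) geodesic.
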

\begin{proof}
	Let $x^*$ be a minimizer of $f$. Then for all $x\in X$ it holds that $f(x)\geq f(x^*)$. In particular for any $\gamma\in\Gamma_{x^*}(X)$ we have that $f(\gamma(h))\geq f(x^*)$ for all $h\in[0,1]$. First let $\gamma(0)=x^*$, then
	$$Df(x^*;\gamma)=\lim_{h\to 0}\frac{f(\gamma(h))-f(\gamma(0))}{h}\geq 0.$$
	This proves the first claim. Next suppose that $(X,d)$ enjoys the local geodesic extension property. Then for every $\gamma\in\Gamma_{x^*}(X)$ there exists $\widetilde\gamma\in\Gamma_{x^*}(X)$ with $\gamma\subset\inte\widetilde\gamma$. Using the definition of the geodesic derivative one obtains the relation $$Df(x^*;\gamma)=\frac{\ell(\gamma)}{\ell(\widetilde\gamma)}\,Df(x^*;\widetilde\gamma).$$
	Now assume w.l.o.g. that $x^*\in \gamma$ is an interior point in $\gamma$, i.e. $x^*=\gamma(s)$ for some $s\in(0,1)$. Then from the two inequalities 
	\begin{align*}
	&\frac{f(\gamma(s+h))-f(\gamma(s))}{h}\geq 0,\quad h>0\quad\text{and}\quad\frac{f(\gamma(s+h))-f(\gamma(s))}{h}\leq 0,\quad h<0
	\end{align*}
	one obtains $0\geq D^-f(x^*;\gamma)= D^+f(x^*;\gamma)\geq 0$ and subsequently $Df(x^*;\gamma)=0$.
\end{proof}

\begin{lemma}
	\label{l:twist}
	Let $(X,d)$ be a $\CAT(0)$ space enjoying the positive angles property. Then for every $x\in X$ there exists $r>0$ such that $D_xc(x,y_1;\gamma)=D_xc(x,y_2;\gamma)$ for all $\gamma\in\Gamma_x(X)$ implies $y_1=y_2$ whenever $y_1,y_2\in B(x,r)$.
\end{lemma}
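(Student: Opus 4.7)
The plan is to reduce the statement to the first variation formula for the squared distance in a $\CAT(0)$ space, and then close by invoking the positive angles hypothesis at $x$ to upgrade an angle equality to a geometric coincidence of geodesics.

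First, I would choose $r>0$ to be the radius supplied by the positive angles property at $x$ (Definition \ref{d:normalpoint}). Because $(X,d)$ is $\CAT(0)$, open balls are convex, so whenever $y\in B(x,r)$ the geodesic $[x,y]$ lies entirely in $B(x,r)$ and belongs to $\Gamma^0_x(B(x,r))$. The core computation is to evaluate $D_xc(x,y;\gamma)$ for an arbitrary $\gamma\in\Gamma^0_x(X)$ and $y\in X$. The standard first variation formula in $\CAT(0)$ spaces (see e.g. Bridson--Haefliger, Cor.~II.3.6) gives
$$d(\gamma(h),y) \;=\; d(x,y) - h\,\ell(\gamma)\cos\alpha_x(\gamma,[x,y]) + o(h), \qquad h\to 0^+,$$
which combined with $a^2-b^2=(a+b)(a-b)$ yields
$$D_x c(x,y;\gamma) \;=\; \lim_{h\to 0^+}\frac{d(\gamma(h),y)^2-d(x,y)^2}{2h} \;=\; -d(x,y)\,\ell(\gamma)\cos\alpha_x(\gamma,[x,y]).$$

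Now fix $y_1,y_2\in B(x,r)$ with $D_x c(x,y_1;\gamma)=D_x c(x,y_2;\gamma)$ for every $\gamma\in\Gamma_x(X)$. Restricting to $\Gamma^0_x(X)\subseteq \Gamma_x(X)$ (where the derivative always exists by the formula above) and dividing through by $\ell(\gamma)>0$ gives
$$d(x,y_1)\cos\alpha_x(\gamma,[x,y_1]) \;=\; d(x,y_2)\cos\alpha_x(\gamma,[x,y_2]) \quad\text{for every } \gamma\in\Gamma^0_x(X).$$
If $y_1=x$, testing with $\gamma=[x,y_2]$ forces $d(x,y_2)^2=0$, hence $y_2=x=y_1$. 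Otherwise assume $y_1,y_2\ne x$ and write $\alpha:=\alpha_x([x,y_1],[x,y_2])$. Selecting $\gamma=[x,y_1]$ yields $d(x,y_1)=d(x,y_2)\cos\alpha$, and selecting $\gamma=[x,y_2]$ yields $d(x,y_2)=d(x,y_1)\cos\alpha$. Multiplying the two identities forces $\cos^2\alpha=1$; the case $\cos\alpha=-1$ is excluded by positivity of the distances, so $\alpha=0$ and $d(x,y_1)=d(x,y_2)$.

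To close, I invoke the positive angles property at $x$ with the chosen radius $r$: both $[x,y_1]$ and $[x,y_2]$ lie in $\Gamma^0_x(B(x,r))$ and have zero Alexandrov angle, so by Definition \ref{d:normalpoint} one must be a subset of the other. Combined with the equal lengths $d(x,y_1)=d(x,y_2)$, this forces $[x,y_1]=[x,y_2]$ and hence $y_1=y_2$. The main technical input is the first variation formula, which is standard in $\CAT(0)$ geometry; the positive angles hypothesis plays precisely the role of turning the trigonometric identity $\alpha=0$ into coincidence of geodesics, which is why it functions as the correct "twist condition" here. No step looks genuinely delicate, although care is needed with the degenerate case $y_i=x$ and with verifying that the choice of $r$ from the positive angles property is also small enough that the geodesics $[x,y_i]$ remain inside $B(x,r)$, which is automatic by convexity of balls.
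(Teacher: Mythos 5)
Your proof is correct, and its skeleton actually coincides with the paper's: both arguments test the hypothesis against the two radial geodesics $\gamma_1=[x,y_1]$ and $\gamma_2=[x,y_2]$, first extract $d(x,y_1)=d(x,y_2)$, then show that a positive Alexandrov angle between them is incompatible with the remaining equality, and finally use the positive angles property to identify the two geodesics. Where you differ is the technical engine. You import the exact first variation formula $D_xc(x,y;\gamma)=-d(x,y)\,\ell(\gamma)\cos\alpha_x(\gamma,[x,y])$ for $\gamma\in\Gamma^0_x(X)$ (valid in $\CAT(0)$ spaces because the comparison angle decreases to the Alexandrov angle as $h\to 0^+$), which reduces the lemma to the two scalar identities $d(x,y_1)=d(x,y_2)\cos\alpha$ and $d(x,y_2)=d(x,y_1)\cos\alpha$ and the observation $\cos^2\alpha=1$ with $\cos\alpha>0$. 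The paper instead derives by hand only the one-sided estimates it needs: the reverse triangle inequality gives $D_xc(x,y_i;\gamma_j)\ge -d(x,y_i)\,d(x,y_j)$, which already yields equidistance, and a separate argument via the metric projection onto $\gamma_i$, Lemma \ref{l:projections} and a Euclidean comparison triangle gives $D_xc(x,y_i;\gamma_j)\ge -d(x,y_i)^2\cos\alpha_x(\gamma_i,\gamma_j)>-d(x,y_i)^2$, producing the contradiction. Your route is shorter and conceptually cleaner at the price of quoting a stronger standard fact as a black box; the paper's is essentially self-contained modulo its projection lemma. Your explicit treatment of the degenerate case $y_i=x$ and of the containment $[x,y_i]\subseteq B(x,r)$ via convexity of balls is correct and covers details the paper passes over in silence.
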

\begin{proof} 
	Let $(X,d)$ satisfy the positive angles property and take $x\in X$. By Definition \ref{d:normalpoint} there exists $B(x,r),\,r>0$ such that for any $\gamma_1,\gamma_2\in\Gamma_x^0(B(x,r))$ with $\gamma_1$ and $\gamma_2$ not subsets of one another, the angle $\alpha_x(\gamma_1,\gamma_2)$ is strictly positive. 
	Denote by $\gamma_i(1)=y_i,\,i=1,2$. First note that by Definition \ref{d:geo-derivative} for all $\gamma\in\Gamma(X)$ and $u, v\in\gamma$ that
	\begin{equation}
	\label{eq:geo-der}
	D_uc(u,v;\gamma)=(t-s)\,d(\gamma(0),\gamma(1))^2
	\end{equation} 
	where $u=\gamma(t), v=\gamma(s)$ for some $s,y\in[0,1]$. From \eqref{eq:geo-der} follows $D_xc(x,y_i;\gamma_i)=-d(x,y_i)^2$ for $i=1,2$. By geodesic derivative and the reverse triangle inequality for $i,j=1,2$ 
	\begin{align*}
	D_xc(x,y_i;\gamma_j)&=\frac{1}{2}\,\lim_{h\to 0}\frac{d(\gamma_j(h),y_i)^2-d(x,y_i)^2}{h}\\&
	\geq\frac{1}{2}\, \lim_{h\to 0}\frac{(d(x,y_i)-d(x,\gamma_j(h)))^2-d(x,y_i)^2}{h}\\&
	=\frac{1}{2}\,\lim_{h\to 0}\frac{d(x,\gamma_j(h))^2-2d(x,\gamma_j(h))\,d(x,y_i)}{h}\\&
	=\frac{1}{2}\,\lim_{h\to 0}\frac{h^2\,d(x,y_j)^2-2h\,d(x,y_j)\,d(x,y_i)}{h}=-\,d(x,y_j)\,d(x,y_i).
	\end{align*}
	Together with $D_xc(x,y_i;\gamma_i)=D_xc(x,y_j;\gamma_i)$, it then implies $d(x,y_i)=d(x,y_j)$, i.e. $y_1,y_2$ must be equidistant from $x$. 
	Let $P_{\gamma_i}(\gamma_j(h))=\gamma_i(t(h))$ where $t:[0,1]\to[0,1]$ then
	\begin{align*}
	D_xc(x,y_i;\gamma_j)&=\frac{1}{2}\,\lim_{h\to 0}\frac{d(\gamma_j(h),y_i)^2-d(x,y_i)^2}{h}\\&\geq \frac{1}{2}\,\lim_{h\to 0}\frac{d(\gamma_j(h),\gamma_i(t(h)))^2+d(\gamma_i(t(h)),y_i)^2-d(x,y_i)^2}{h}\\&\geq \frac{1}{2}\,\lim_{h\to 0}\frac{d(\gamma_i(t(h)),y_i)^2-d(x,y_i)^2}{h}
	=\frac{1}{2}\,d(x,y_i)^2\lim_{h\to 0}\frac{-t(h)(2-t(h))}{h}.
	\end{align*}
	First notice that if $P_{\gamma_i}(\gamma_j(h))=x$, i.e. $\gamma_i(t(h))=x$, then $D_xc(x,y_i;\gamma_j)\geq 0$ and in particular by hypothesis $D_x(x,y_j;\gamma_j)\geq 0$, which together with $D_xc(x,y_j;\gamma_j)=-d(x,y_j)^2\leq 0$ implies that $x=y_j$ and consequently $x=y_i$, hence $y_1=y_2$. 
	For a given $h>0$ let $\Delta(\overline \gamma_j(h),\overline x,\overline \gamma_i(t(h)))$ be the comparison triangle in $\mathbb R^2$ for $\Delta( \gamma_j(h),x, \gamma_i(t(h)))$. By non-positive curvature of the space we have that $\alpha_x(\gamma_j(h),\gamma_i(t(h)))\leq \overline\alpha_{\overline x}(\overline\gamma_j(h),\overline\gamma_i(t(h)))$ for $h>0$. Let $\ell$ be the line extending from the segment $[\overline x, \overline\gamma_i(t(h))]$ in $\mathbb R^2$ and let $P_{\ell}(\overline \gamma_j(h))$ denote the projection of $\overline \gamma_j(h)$ onto $\ell$. Note that by Lemma \ref{l:projections} the comparison angle at $\gamma_i(t(h))$ satisfies $$\overline{\alpha}_{\overline{\gamma}_i(t(h))}([\overline{\gamma}_i(t(h)),\overline{x}],[\overline{\gamma}_i(t(h)),\overline{\gamma}_j(h)])\geq \alpha_{\gamma_i(t(h))}([\gamma_i(t(h)),x],[\gamma_i(t(h)),\gamma_j(h)])\geq\frac{\pi}{2}.$$ Elementary calculations about Euclidean triangles yield the inequality
	$$\frac{t(h)}{h}\leq \frac{|\overline x\,P_{\ell}(\overline \gamma_j(h))|}{|\overline x\,\overline\gamma_j(h)|}=\cos\overline \alpha_{\overline x}(\overline\gamma_j(h),\overline\gamma_i(t(h))).$$
	Passing in the upper limit as $h\to 0$ gives
	\begin{align*}
	0\leq\limsup_{h\to 0}\frac{t(h)}{h}\leq \limsup_{h\to 0}\cos\overline \alpha_{\overline x}(\overline\gamma_j(h),\overline\gamma_i(t(h)))=\cos \alpha_x(\gamma_j,\gamma_i)<1
	\end{align*}
	where the last inequality follows for all $x\in X$. Therefore we get $D_xc(x,y_i;\gamma_j)>-\,d(x,y_i)^2$. On the other hand $D_xc(x,y_i;\gamma_j)=D_xc(x,y_j;\gamma_j)=-\,d(x,y_j)^2$ implies $d(x,y_j)<d(x,y_i)$, which raises a contradiction as $y_1,y_2$ are equidistant from $x$.
\end{proof}

\section{Existence and uniqueness of optimal transport maps}
\label{s:main}
\subsection{Optimal transport maps in $\CAT(0)$ spaces}
\begin{thm}
	\label{th:1} Let $(X,d)$ with $\dim_HX<+\infty$ be a locally compact complete $\CAT(0)$ space satisfying the following properties $\mathscr H_{d}$-a.e. uniformly:
	\begin{enumerate}[(i)]
		\item positive angles;
		\item the disintegration regularity.
	\end{enumerate}
	  Let $\mu,\nu\in\mathscr P_2(X)$ with $\mu$ absolutely continuous.  Then there exists a unique optimal transport plan $\pi\in\Pi(\mu,\nu)$. Moreover $\pi=(\Id,T)_{\#}\mu$ for a Borel measurable mapping $T:X\to X$ such that $T(x)\in\partial_c\psi(x)$ where $\psi$ is a $c$-convex function and satisfies
	\begin{equation}
	\label{eq:T}
	\widetilde D\psi(x;\gamma)+D_xc(x,T(x);\gamma)=0,\quad\forall \gamma\in\Gamma_x(X)\; \mu-a.e..
	\end{equation} 
\end{thm}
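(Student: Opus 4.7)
The plan is to first invoke Remark \ref{r:c-convex} to produce an optimal plan $\pi\in\Pi(\mu,\nu)$ and Lemma \ref{l:Kantorovich}(iii) to extract a $c$-convex potential $\psi$ with $\pi(\partial_c\psi)=1$. For $\pi$-a.e.\ $(x,y)$ the identity $\psi^c(y)-\psi(x)=c(x,y)$ combined with the definition $\psi^c(y)=\inf_z(\psi(z)+c(z,y))$ says exactly that $x$ is a global minimizer of $z\mapsto\psi(z)+c(z,y)$. The theorem then reduces to showing that for $\mu$-a.e.\ $x$ the fiber $\partial_c\psi(x)$ consists of a single point; $T$ would then be defined by this selection, Borel measurability will follow by a standard measurable selection from the closed graph of $\partial_c\psi$ with single-valued fibers, the representation $\pi=(\Id,T)_{\#}\mu$ will hold because $\pi$ is already concentrated on this graph, and uniqueness of $\pi$ will follow since any second optimal plan shares the same property and thus also equals $(\Id,T)_{\#}\mu$.

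The central analytic step is to establish $\mu$-approximate geodesic differentiability of $\psi$ at $\mu$-a.e.\ $x$ along every $\gamma\in\Gamma_x(X)$. First, $\psi$ is locally Lipschitz: on any bounded region where the defining supremum is finite it is a supremum of maps $z\mapsto\zeta(y)-c(z,y)$ with a uniform local Lipschitz constant inherited from $c(\cdot,y)=d(\cdot,y)^2/2$. I would then fix a countable dense set $\{z_n\}\subset X$ and apply Theorem \ref{th:H1} at each $z_n$ on a ball $B_n=B[z_n,r_n]$, using uniform disintegration regularity to cover $X$ by such balls up to a $\mathscr H_d$-null set: $\mathscr H_d|_{B_n}$ disintegrates along the partition of maximal geodesics issuing from $z_n$, with conditionals absolutely continuous with respect to $\mathscr H_1$. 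Since $\psi$ restricted to each such geodesic is Lipschitz in a single real parameter, it is classically differentiable $\mathscr H_1$-a.e.\ on the geodesic; via \eqref{eq:Beppo} this promotes to differentiability of $\psi$ at $\mathscr H_d$-a.e.\ $x\in B_n$ along the direction $[z_n,x]$. Intersecting over $n$ yields a $\mathscr H_d$-full set on which $\psi$ has directional derivatives along the countable dense family $\{[x,z_n]\}\subset\Gamma_x(X)$. The \emph{hard part} is then to upgrade this to $\mu$-approximate differentiability along \emph{every} $\gamma\in\Gamma_x(X)$; I intend to exploit the one-sided semiconvexity of $c$-convex functions along geodesics (inherited from \eqref{eq:quadratic} applied to $\psi+c(\cdot,y)$) together with continuity of the Alexandrov angle to control difference quotients along arbitrary $\gamma$ by those along a dense sequence of directions, and so produce the modification $\widetilde\psi$ required by Definition \ref{d:approx-derivative}.

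With this differentiability in hand I would combine Lemma \ref{l:min-max} with Lemma \ref{l:twist}. Theorem \ref{th:a.e.-extension} grants local geodesic extensions $\mathscr H_d$-a.e., so Fermat at the minimizer $x$ of $z\mapsto\psi(z)+c(z,y)$ yields
\begin{equation*}
\widetilde D\psi(x;\gamma)+D_xc(x,y;\gamma)=0\qquad\text{for every }\gamma\in\Gamma_x(X),
\end{equation*}
and this holds for every $y\in\partial_c\psi(x)$ at $\mu$-a.e.\ $x$. Consequently, if $y_1,y_2\in\partial_c\psi(x)$ then $D_xc(x,y_1;\gamma)=D_xc(x,y_2;\gamma)$ for all $\gamma$, and Lemma \ref{l:twist} forces $y_1=y_2$ as soon as both lie in the ball $B(x,r)$ supplied by the uniform positive angles property. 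A localization via cyclic monotonicity---decomposing $\mu$ into countably many pieces each supported in a cell small enough that the corresponding values $y\in\partial_c\psi(x)$ are pinned inside such a ball---reduces the general case to this one. This yields single-valuedness of $\partial_c\psi$ at $\mu$-a.e.\ $x$ and completes the argument sketched in the first paragraph.
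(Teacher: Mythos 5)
Your overall architecture coincides with the paper's: existence of an optimal plan via Remark \ref{r:c-convex}, extraction of a $c$-convex potential $\psi$ via Lemma \ref{l:Kantorovich}, a.e.\ geodesic differentiability via the disintegration of Theorem \ref{th:H1}, and then Lemma \ref{l:min-max} combined with Lemma \ref{l:twist} to force single-valuedness of $\partial_c\psi$. But two steps you rely on are not actually established, and they are precisely the steps the paper's proof is built around.

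First, the claim that $\psi$ is locally Lipschitz because it is a supremum of the maps $z\mapsto\zeta(y)-c(z,y)$ ``with a uniform local Lipschitz constant'' does not hold as stated: on a ball $B[x_0,r]$ the Lipschitz constant of $c(\cdot,y)=d(\cdot,y)^2/2$ is of order $d(x_0,y)+r$, which is unbounded in $y$ when $X$ is unbounded, and finiteness of the supremum at each point does not confine the relevant $y$'s to a bounded set. The paper avoids this by introducing the truncation $\psi_R(x)=\inf_{y\in B(y_0,R)}(\psi^c(y)-c(x,y))$, which is Lipschitz on balls by construction, and by showing separately that $\{\psi\neq\psi_R\}$ is $\mu$-negligible for $R$ large; this truncation is also exactly the function $\widetilde\psi$ demanded by Definition \ref{d:approx-derivative}, i.e.\ it is how $\widetilde D\psi$ is realized. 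Your proposal never produces a concrete $\widetilde\psi$. Second, the step you yourself flag as the ``hard part''---upgrading differentiability from the countable family of radial directions $[z_n,x]$ to every $\gamma\in\Gamma_x(X)$, and in particular to the geodesics $[x,y_i]$ with $y_i\in\partial_c\psi(x)$ that the twist argument actually requires---is left as an intention. Semiconvexity of $\psi+c(\cdot,y)$ coming from \eqref{eq:quadratic}, together with continuity of the Alexandrov angle, yields only one-sided bounds on difference quotients; turning these into two-sided control along an arbitrary direction presupposes a first-order calculus on the tangent cone that is precisely what is at issue, so the identity $\widetilde D\psi(x;\gamma)+D_xc(x,y;\gamma)=0$ cannot yet be asserted for all $\gamma$. (To be fair, the paper's own treatment of this point---defining $\Omega$ by failure of differentiability along \emph{some} geodesic and then estimating it fiberwise along the single maximal geodesic of the partition---is itself brisk, but as submitted your argument does not close the step either.) A smaller caveat: Lemma \ref{l:twist} only separates $y_1,y_2$ lying in the ball $B(x,r)$ furnished by the positive-angles property, and your proposed localization by cyclic monotonicity to pin $\partial_c\psi(x)$ inside such a ball is asserted rather than proved.
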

\begin{proof}
By Remark \ref{r:c-convex} there exists an optimal transport plan $\pi\in\Pi(\mu,\nu)$. Lemma \ref{l:Kantorovich} implies that $\pi$-almost surely $\psi^c(y)-\psi(x)=c(x,y)$ for some $c$-convex function $\psi$. 
Evidently any such $y$ satisfies $y\in\partial_c\psi(x)$. Next we show that this $y$ is unique. It suffices to prove that $T(x)=y$ for a certain Borel measurable mapping $T:X\to X$.  
	
For a given $y_0\in X$ and $R>0$ define the function $\psi_R(x)=\inf_{y\in B(y_0,R)}(\psi^c(y)-c(x,y))$. We claim that $\psi_R$ is locally Lipschitz in $X$ for sufficiently large $R$ (enough that $\psi^c(y)<+\infty$ for some $y\in B(y_0,R)$).
Indeed let $x_0\in X$ and $B[x_0,r]$ be a closed geodesic ball centered at $x_0$ of radius $r>0$. For $x_1,x_2\in B[x_0,r]$ it holds
	\begin{align*}
	|\psi_R(x_1)-\psi_R(x_2)|&=|\inf_{y\in B(y_0,R)}(\psi^c(y)-c(x_1,y))-\inf_{y\in B(y_0,R)}(\psi^c(y)-c(x_2,y))|\\&
	\leq \sup_{y\in B(y_0,R)}|c(x_2,y)-c(x_1,y)^2|\leq 2r\,d(x_1,x_2).
	\end{align*}
Let $x\in X$ and $\gamma\in \Gamma_x(X)$. 
	Denote by $$\Omega=\{x\in X\,:\,D^+\psi_R(x;\gamma)>D^-\psi_R(x;\gamma)\,\text{for some geodesic}\,\gamma\in\Gamma_x(X)\}.$$ Define $\varphi(t;x,\gamma):=\psi_R(\gamma(t))$, for any $s,t\in[0,1]$ it follows $|\varphi(s;x,\gamma)-\varphi(t;x,\gamma)|\leq 2r\ell(\gamma)\,|s-t|$ for some $r>0$. In particular $\varphi(\cdot\,;x,\gamma)$ is Lipschitz continuous on $[0,1]$. By Lebesgue's differentiation theorem then $\varphi(\cdot\,;x,\gamma)$ is differentiable almost everywhere on $[0,1]$ and consequently $\psi_R(y)$ is $\mathscr H_1$-a.e. differentiable on $\gamma$.   
 Let $\{x_k\}_{k\in\mathbb N}\subseteq X$ such that the positive angles property and disintegration regularity are satisfied at every $x_k$. Let $B[x_k,r]$ be the balls where these two properties hold in view of Definitions \ref{d:normalpoint} and \ref{d:disintegration} (w.l.o.g. $r>0$ is the same for both conditions). Moreover note by Proposition \ref{p:normal-points} that positive angles property extends to closed ball $B[x,r]$, whenever it holds in its interior. 
 By Lemma \ref{l:sigma-finite} there exists a countable dense set $\{y_k\}_{k\in\mathbb N}\subseteq X$ such that  $X=\bigcup_{k\in\mathbb N}B(y_k,r/2)$. Let $\dim_HB(y_k,r/2)=d_k$.
 We can pick $x_k$ from $B(y_k,r/2)$ whenever $d_k=d(=\dim_HX)$. Then we can write 
 $$X=\Big(\bigcup_{k\in\mathbb N, d_k<d}B(y_k,r/2)\Big)\cup\Big(\bigcup_{k\in\mathbb N}B[x_k,r]\Big)$$
 since $B(y_k,r/2)\subset B[x_k,r]$ whenever $x_k$ is chosen from $B(y_k,r/2)$ with $d_k=d$. Moreover note that $\dim_HB[x_k,r]=d$ for all $k\in\mathbb N$.
  By Theorem \ref{th:H1} the Hausdorff measure $\mathscr H_{d}$ admits a disintegration $(\mu^k_{\widehat\gamma})_{\widehat\gamma\in M_{x_k}(X_k)}$, where $X_k=B[x_k,r]$, that is absolutely continuous with respect to $\mathscr H_1$. For each $k\in\mathbb N$ we have 
 $$\int_{\Omega\cap X_k}\,d\mathscr H_d(y)=\int_{T_k}\mu^k_{\widehat\gamma}(p_k^{-1}([\widehat\gamma])\cap \Omega)\,d\mathscr H_{d}(p^{-1}_k([\widehat\gamma])),\quad T_k=X_k/\sim.$$
 Since $\psi_R(\gamma(t))$ is differentiable a.e. on $[0,1]$ for every $\gamma\in\Gamma(X)$ then in particular $\psi_R(\widehat\gamma(t))$ is so for every $\widehat\gamma\in M_{x_k}(X_k)$, consequently $D^+\psi_R(y;\widehat\gamma)=D^-\psi_R(y;\widehat\gamma),\;\mathscr H_1-$a.e. on $\widehat\gamma$. Therefore $\mu^k_{\widehat\gamma}(p_k^{-1}([\widehat\gamma])\cap \Omega)=0$ for all $\widehat\gamma\in M_{x_k}(X_k)$, for every $k\in\mathbb N$.
Elementary estimations then yield
 \begin{align*}
 \mathscr H_{d}(\Omega)=\int_{y\in\Omega}\,d\mathscr H_{d}(y)\leq \sum_{k\in\mathbb N}\int_{\Omega\cap X_k}\,d\mathscr H_{d}(y)=0.
 \end{align*}
Hence $\psi_R$ is geodesically differentiable $\mathscr H_d$-a.e. and in particular $\mu$-a.e. on $X$ since $\mu\ll\mathscr H_d$.

Next we claim that the decreasing sets $\{\psi<\psi_R\}$ in $R$ have $\mu$-negligible intersection $S=\bigcap_{R>0}\{\psi<\psi_R\}$. Indeed if $\mu(S)>0$ then for every $x\in S$ we would have $\psi(x)<\psi_R(x)$ for all $R>0$ and by Lemma \ref{l:Kantorovich} we have $y(x)\in S$ such that $\psi^c(y(x))-\psi(x)=c(x,y(x))$. But then this would imply $\psi^c(y(x))-c(x,y(x))<\psi_R(x)$ for all $R>0$, which for sufficiently large $R>0$ would be impossible as per definition of $\psi_R$. Consequently for large enough $R>0$ it holds $\psi(x)=\psi^c(y)-c(x,y)\geq \psi_R(x)$ for $\mu$-a.e.. On the other hand we have $\psi_R(x)\geq \inf_{y\in X}(\psi^c(y)-c(x,y))=\psi(x)$ for $\mu$-a.a. $x\in X$, consequently $\psi(x)=\psi_R(x)$ for $\mu$-a.a. $x\in X$ for large enough $R>0$. Hence $\{\psi\neq \psi_R\}$ is a $\mu$-negligible set and in particular $\{\psi=\psi_R\}$ has density $1$ at every $x\in X$ with respect to $\mu$. 
Then in view of Definition \ref{d:approx-derivative} the $\mu$-approximate geodesic derivative of $\psi$ exists and it is given by $\widetilde D\psi(x;\gamma)=D\psi_R(x;\gamma)$ for $\mu$-a.a. $x\in X$ and $\gamma\in\Gamma_x(X)$.
 
 The minimal value of the function $z\mapsto \psi(z)+d(z,y)^2$ equals $\psi^c(y)$ and it is attained at $z=x$. By Theorem \ref{th:a.e.-extension} the local geodesic extension property holds $\mathscr H_d$-a.e., then applying Lemma \ref{l:min-max} yields $\widetilde D\psi(x;\gamma)+D_xc(x,y;\gamma)= 0\,(\dagger)$ for $\mu$-a.a. $x\in X$. Since $X$ satisfies the positive angles property $\mathscr H_{d}-$a.e., then by virtue of Lemma \ref{l:twist} there is a unique $y$ locally at $x$ $\mu$-a.e. for which $(\dagger)$ is fulfilled for all $\gamma\in\Gamma_x(X)$. We set $y:= T(x)$, i.e. $\widetilde D\psi(x;\gamma)+D_xc(x,T(x);\gamma)= 0$ for $\mu$-a.e. $x$ and all $\gamma\in\Gamma_{x}(X)$. Similar arguments as in \cite[Theorem 5.30]{Villani} show that $T$ is Borel measurable and unique up to $\mu$-negligible sets.
 \end{proof}


\subsection{The case of Riemannian manifolds}A special class of $\CAT(0)$ spaces are Riemannian manifolds of non-positive sectional curvature. By \cite[Theorem 1A.6]{Brid} a smooth Riemannian manifold $\mathscr M$ is of curvature $\kappa$ in the sense
of Alexandrov  if and only if the sectional curvature of $\mathscr M$ is $\kappa$. We denote by $\langle\cdot,\cdot\rangle_x$ and $\mathscr T_x\mathscr M$ the Riemannian metric and the tangent space at $x\in \mathscr M$ respectively. Given $\gamma,\eta\in\Gamma_x(\mathscr M)$, the Riemannian angle  $\angle_x(\gamma'(0),\eta'(0))$ at $x$ between the geodesic $\gamma$ and $\eta$ is defined as the Euclidean angle between the tangent vectors $\gamma'(0)$ and $\eta'(0)$ in the tangent space $\mathscr T_x\mathscr M$. By \cite[Corollary IA.7]{Brid} the Alexandrov angle $\alpha_x(\gamma,\eta)$ coincides with the Riemannian angle $\angle_x(\gamma'(0),\eta'(0))$. In what follows an important role plays the exponential mapping $\exp_x:\mathscr T_x\mathscr M\to \mathscr M$ which acts by the formula $\exp_x(tv)=\gamma(t)$ where $\gamma'(0)=v\in\mathscr T_x\mathscr M$ and $t\in[0,1]$. By Hadamard's Theorem, e.g. see \cite[Theorem 3.1, \S7]{doCarmo}, $\exp_x:\mathscr T_x\mathscr M\to \mathscr M$ is a global diffeomorphism, so the differential $(d\exp_x)_v$ is well defined and in particular invertible for all $v\in\mathscr T_x\mathscr M\setminus\{0\}$. 
We present a classical result due to Federer, which will prove useful for our next theorem.

%

\begin{lemma}\cite[Theorem 3.1]{Federer}
\label{l:Federer}
Let $\mathscr M$ and $\mathscr N$ be two smooth Riemannian manifolds with $\dim_H\mathscr M=d\geq k=\dim_H\mathscr N$ and $f:\mathscr M\to\mathscr N$ a Lipschitz mapping, then 
\begin{equation}
\label{eq:Federer}
\int_{\mathscr M}g(x)\,Jf(x)\,d\mathscr H_d(x)=\int_{\mathscr N}\int_{f^{-1}(y)}g(x)\,d\mathscr H_{d-k}(x)\,d\mathscr H_k(y)
\end{equation}
for every $\mathscr H_d$--integrable function $g$. Here $Jf(x)$ denotes the square root of the sum of squares of the determinants of the $k\times k$ minors of the  matrix of the differential $(df)_x$.
\end{lemma}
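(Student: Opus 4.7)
The plan is to establish \eqref{eq:Federer} by combining a chart-by-chart reduction to the Euclidean setting with a separate treatment of the critical and regular parts of $\mathscr M$ with respect to $f$. First, since both sides are countably additive in the support of $g$, I would use a partition of unity subordinate to coordinate neighbourhoods on $\mathscr M$ and $\mathscr N$ to reduce to the case where $\mathscr M$ and $\mathscr N$ are open subsets of $\mathbb R^d$ and $\mathbb R^k$; the Riemannian volume elements enter only as smooth positive conformal factors that can be absorbed into $g$ and into the definition of $Jf$. By Rademacher's theorem the Lipschitz map $f$ is differentiable $\mathscr H_d$-almost everywhere, so $Jf(x) = \sqrt{\det((df)_x (df)_x^\ast)}$ is well defined a.e., and this coincides with the sum-of-squares-of-minors expression in the statement via the Cauchy--Binet identity.

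Next I would split $\mathscr M = C \cup R$, where $C = \{Jf = 0\}$ is the critical set and $R = \mathscr M \setminus C$ the regular set. On $C$ the left-hand integrand vanishes identically, so it remains to show that the right-hand side also receives no contribution from $C$, i.e.\ that $\mathscr H_{d-k}(f^{-1}(y) \cap C) = 0$ for $\mathscr H_k$-almost every $y \in \mathscr N$. This is the main obstacle, being a Lipschitz version of Sard's theorem. The cleanest route is to apply the Whitney extension theorem to replace $f$, outside a set of arbitrarily small $\mathscr H_d$-measure, by a $C^1$ map to which the classical Sard theorem applies, giving $\mathscr H_k(f(C))=0$ in the approximation; letting the exceptional set shrink and invoking the standard measure-theoretic fact that a set of zero $\mathscr H_k$-measure in $\mathscr N$ forces fibrewise vanishing of the integral on the right-hand side then closes this case.

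On the regular set $R$ the differential $(df)_x$ has maximal rank $k$ almost everywhere, so after a further Lusin-type refinement and a countable covering argument one can arrange $R = \bigcup_i U_i$, up to an $\mathscr H_d$-null set, with each $f|_{U_i}$ coinciding with a $C^1$ submersion. By the implicit function theorem, up to a $C^1$ diffeomorphism $\Phi_i : U_i \to V_i \times W_i \subseteq \mathbb R^{d-k} \times \mathbb R^k$, the restriction $f|_{U_i}$ becomes the projection onto $W_i$ post-composed with a diffeomorphism of $\mathscr N$. On each $U_i$ the identity \eqref{eq:Federer} reduces, via the smooth change-of-variables formula and Fubini's theorem, to the trivial statement for a projection. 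Summing over $i$ handles $R$, combining with the vanishing on $C$ yields \eqref{eq:Federer} for non-negative $g$, and the general case follows by splitting $g$ into positive and negative parts and invoking monotone convergence.
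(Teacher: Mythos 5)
You should first note that the paper does not prove this statement at all: it is quoted as Federer's coarea formula, \cite[Theorem 3.1]{Federer}, and used as a black box in Theorems \ref{th:manifold} and \ref{th:poly-complex}. So the only question is whether your argument is sound on its own terms. Most of it is the standard textbook proof (chart reduction via a partition of unity, Rademacher plus Cauchy--Binet, $C^1$ linearization and the implicit function theorem on the regular set, splitting $g$ and monotone convergence), but the treatment of the critical set $C=\{Jf=0\}$ contains a genuine error.

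Your plan is to use Whitney extension to replace $f$ off a small set by a $C^1$ map and then invoke ``the classical Sard theorem'' to get $\mathscr H_k(f(C))=0$. This fails for two reasons. First, Sard's theorem for maps $\mathbb R^d\to\mathbb R^k$ requires $C^{d-k+1}$ regularity; for merely $C^1$ maps with $d>k$ it is false (Whitney's example of a $C^1$ function on $\mathbb R^2$ that is non-constant on a connected set of critical points), and Whitney extension of a Lipschitz map only produces a $C^1$ approximant. Second, the conclusion you are aiming for is itself false: for Lipschitz (even $C^1$) $f$ with $d>k$ the set of critical values $f(C)$ can have positive $\mathscr H_k$-measure. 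What the coarea formula actually requires is the strictly weaker statement
\begin{equation*}
\int_{\mathscr N}\mathscr H_{d-k}\bigl(f^{-1}(y)\cap C\bigr)\,d\mathscr H_k(y)=0,
\end{equation*}
which does not follow from any assertion about the measure of the image $f(C)$. The standard way to prove it (Federer, or Evans--Gariepy \S 3.4) is a perturbation argument: consider $h_\varepsilon(x,z)=f(x)+\varepsilon z$ on $\mathscr M\times\mathbb R^k$, observe that the relevant Jacobian of $h_\varepsilon$ is $O(\varepsilon)$ on $C\times\mathbb R^k$, and combine the Eilenberg inequality with Fubini to bound the displayed integral by a quantity that tends to $0$ as $\varepsilon\to 0$. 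Replacing your Sard step by this argument (and supplying the measurability of $y\mapsto\mathscr H_{d-k}(f^{-1}(y)\cap A)$, which you use implicitly) would make the proof complete; as written, the critical-set case is not established.
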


\begin{thm}
	\label{th:manifold}
	Let $(\mathscr M, \langle\cdot,\cdot\rangle_x)$ be a Riemannian manifold of non-positive curvature.  For any $\mu,\nu\in\mathscr P_2(\mathscr M)$ with $\mu$ absolutely continuous, there is a unique optimal transport plan $\pi\in\Pi(\mu,\nu)$. Moreover $\pi=(\Id,T)_{\#}\mu$ for a Borel measurable map $T:\mathscr M\to \mathscr M$ such that $T(x)\in\partial_c\psi(x)$ where $\psi$ is a $c$-convex function and satisfies $Tx=\exp_x(\widetilde\nabla\psi(x))\;\mu$-a.e.. 
\end{thm}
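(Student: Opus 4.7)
The plan is to realize this theorem as a concrete instance of Theorem \ref{th:1}, so that the bulk of the work is checking that a simply connected Riemannian manifold $\mathscr{M}$ of non-positive sectional curvature meets the hypotheses of that theorem, and then translating the abstract optimality condition \eqref{eq:T} into the exponential-map form $T(x)=\exp_x(\widetilde\nabla\psi(x))$. By the Cartan--Hadamard theorem $\mathscr{M}$ is diffeomorphic to $\mathbb{R}^n$, hence complete and locally compact, and by \cite[Theorem 1A.6]{Brid} it is $\CAT(0)$; Lemma \ref{l:Hausdorff}(vii) gives $\dim_H\mathscr{M}=n<+\infty$.

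Next I would verify the two $\mathscr{H}_d$-a.e.\ uniform conditions of Theorem \ref{th:1}. For the positive angles property, I invoke \cite[Corollary IA.7]{Brid}: the Alexandrov angle $\alpha_x(\gamma,\eta)$ agrees with the Riemannian angle $\angle_x(\gamma'(0),\eta'(0))$, which is strictly positive whenever $\gamma'(0)$ and $\eta'(0)$ are linearly independent tangent vectors. Since $\exp_x$ is a diffeomorphism, two geodesics issuing from $x$ coincide on an initial segment only if their initial velocities are parallel, in which case they are subsets of one another. The property therefore holds at every $x\in\mathscr{M}$ uniformly on any compact ball.

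For the disintegration regularity, given $x$ and a geodesic $\gamma\in\Gamma^0_x(B[x,r])$, I transport everything to the tangent space via $\exp_x$. The radial projection $R_\gamma$ then coincides with the composition of $\exp_x^{-1}$ and radial projection in $\mathscr{T}_x\mathscr{M}\cong\mathbb{R}^n$ onto the segment $\exp_x^{-1}(\gamma)$. Writing $R_\gamma=\rho\circ\exp_x^{-1}$ with $\rho$ the Euclidean radial projection, $R_\gamma$ is Lipschitz with Jacobian $J R_\gamma$ that is strictly positive on $B[x,r]\setminus\{x\}$; applying Federer's coarea formula (Lemma \ref{l:Federer}) with $k=1$ to $R_\gamma$ yields
\begin{equation*}
\int_{\gamma}\int_{R_\gamma^{-1}(y)\cap B[x,r]}\phi(u)\,d\mathscr{H}_{d-1}(u)\,d\mathscr{H}_1(y)=\int_{B[x,r]}\phi(u)\,J R_\gamma(u)\,d\mathscr{H}_d(u),
\end{equation*}
so \eqref{eq:absoluteS} holds with $\zeta_\gamma=J R_\gamma>0$ $\mathscr{H}_d$-a.e.; condition (ii) of Definition \ref{d:disintegration} is immediate from Lemma \ref{l:mappings}. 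Hence Theorem \ref{th:1} applies and gives a unique optimal plan $\pi=(\Id,T)_\#\mu$ with $T(x)\in\partial_c\psi(x)$ satisfying \eqref{eq:T}.

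It remains to recast \eqref{eq:T} as $T(x)=\exp_x(\widetilde\nabla\psi(x))$. For $\gamma\in\Gamma_x(\mathscr{M})$ with $\gamma'(0)=v\in\mathscr{T}_x\mathscr{M}$, a direct computation gives $\widetilde D\psi(x;\gamma)=\langle\widetilde\nabla\psi(x),v\rangle_x$ and $D_x c(x,y;\gamma)=-\langle\exp_x^{-1}(y),v\rangle_x$, so \eqref{eq:T} becomes $\langle\widetilde\nabla\psi(x)-\exp_x^{-1}(T(x)),v\rangle_x=0$ for every $v\in\mathscr{T}_x\mathscr{M}$, yielding $\exp_x^{-1}(T(x))=\widetilde\nabla\psi(x)$ $\mu$-a.e., i.e.\ $T(x)=\exp_x(\widetilde\nabla\psi(x))$. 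The main obstacle is the verification of the disintegration regularity, since one must check both that $J R_\gamma$ is strictly positive $\mathscr{H}_d$-almost everywhere on $B[x,r]$ (ensured by Hadamard's theorem away from $x$) and that Federer's formula can be applied despite $R_\gamma$ failing to be smooth at $x$; this is handled by discarding the single null point $\{x\}$ and using the Lipschitz version of Federer's theorem on the complement.
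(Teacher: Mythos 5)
Your proposal is correct and follows essentially the same route as the paper: verify positive angles via the Alexandrov--Riemannian angle identification, verify disintegration regularity by expressing $R_\gamma$ as a conjugate of the tangent-space radial projection by $\exp_x$ and showing the coarea Jacobian is positive away from $x$ using the invertibility of $(d\exp_x)_v$, then translate \eqref{eq:T} into $\exp_x^{-1}(T(x))=\widetilde\nabla\psi(x)$. The only slip is notational: $R_\gamma$ should be $\exp_x\circ\rho\circ\exp_x^{-1}$ rather than $\rho\circ\exp_x^{-1}$ (the paper's map $F(y)=\exp_x(A_\gamma\exp_x^{-1}(y))$), but your Jacobian argument is unaffected.
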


\begin{proof}
It suffices to prove the conditions (i)(ii) in Theorem \ref{th:1}. 
 Let $\gamma_1,\gamma_2\in\Gamma^0_x(\mathscr M)$ such that $\{x,\gamma_1(1),\gamma_2(1)\}$ are non-collinear. Suppose that $\alpha_x(\gamma_1,\gamma_2)=0$, then $\angle_x(\gamma'_1(0),\gamma_2'(0))=0$ in $\mathbb R^d$ where $d=\dim_H\mathscr M$. There exists $a>0$ such that $\gamma_2'(0)=a\,\gamma'_1(0)$. This implies that $\gamma_1(t)=\gamma_2(t/a)$ for $t\in[0,1]$ if $a\geq 1$ and $\gamma_1(a\,t)=\gamma_2(t)$ for $t\in[0,1]$ if $a<1$. In either case we would have $\gamma_i\subseteq\gamma_j$ for $i\neq j$, implying that $\{x,\gamma_1(1),\gamma_2(1)\}$ are collinear. This is impossible. Moreover note that for any $n\in\mathbb N$ we have that $r_x>n$ for all $x\in\mathscr M$, i.e. $\inf\{r_x\,:\,x\in\mathscr M\}>0$ trivially. 
 
 Last we show that disintegration regularity is also satisfied. Let $x\in\mathscr M$ and $\gamma\in\Gamma^0_x(\mathscr M)$.  
 Take $y\in\mathscr M$ and denote $F(y)=\exp_x(A_{\gamma}\,\exp^{-1}_x(y))$  where $\eta\in\Gamma^0_x(\mathscr M)$ satisfies $\eta(1)=y$ and $A_{\gamma}:\mathscr T_x\mathscr M\to \mathscr T_x\mathscr M$ acts by the formula $A_{\gamma}(v)=\gamma'(0)|v|/|\gamma'(0)|$ if $|v|<|\gamma'(0)|$ and $A_{\gamma}(v)=\gamma'(0)$ otherwise. We claim that $F(y)=R_{\gamma}(y)$. Note that $\exp^{-1}_x(y)=\eta'(0)$
 so $A_{\gamma}(\eta'(0))=\gamma'(0)\,|\eta'(0)|/|\gamma'(0)|$ if $|\eta'(0)|<|\gamma'(0)|$ and $A_{\gamma}(\eta'(0))=\gamma'(0)$ otherwise. Consequently we obtain $\exp_x(A_{\gamma}(\eta'(0)))=\gamma(|\eta'(0)|/|\gamma'(0)|)$ if $|\eta'(0)|<|\gamma'(0)|$ and $\exp_x(A_{\gamma}(\eta'(0)))=\gamma(1)$ otherwise. Realizing that $|\gamma'(0)|=\ell(\gamma)$ and $|\eta'(0)|=\ell(\eta)$ we get $d(x,F(y))=d(\gamma(0),\gamma(\ell(\eta)/\ell(\gamma))=\ell(\eta)=d(x,y)$ if $\ell(\eta)<\ell(\gamma)$ and $d(x,F(y))=d(\gamma(0),\gamma(1))$ otherwise. Because $F(y)\in\gamma$ then $F(y)=R_{\gamma}(y)$. Let $\{x_k\}_{k\in\mathbb N}$ be a dense subset of $\mathscr M$ and $r>0$. We can write $\mathscr M=\bigcup_{k\in\mathbb N}B_k$ where $B_k\coloneqq B(x_k,r)$. Let $d_k=\dim_HB_k$. Applying \eqref{eq:absolute} together with Lemma \ref{l:Federer} by identifying $\mathscr N$ with some $\gamma_k\in\mathscr M_{x_k}(B_k)$, we have that $\zeta_{\gamma_k}(y)=JF(y),\;\mathscr H_{d_k}$--a.e. on $B_k$.  Therefore it is enough to show that $JF(y)>0,\;\mathscr H_{d_k}$--a.e. on $B_k$. From the definition of $J$ we then get
 \begin{align*}
 JF(y)=(\sum_{1\leq i,j\leq d}((dF)_{y})_{i,j}^2)^{1/2}=\|(dF)_y\|_{\mathscr F},
 \end{align*}
 where $\|A\|_{\mathscr F}:=(\sum_{i=1}^m\sum_{j=1}^n|a_{ij}|^2)^{1/2}$ denotes the Frobenius norm of an $m\times n$ matrix $A$.
From the chain rule for differentiation we have $$(dF)_y=(d\exp_{x_k})_{A_{\gamma_k}\exp^{-1}_{x_k}(y)}\cdot (dA_{\gamma_k})_{\exp^{-1}_{x_k}(y)}\cdot (d\exp^{-1}_{x_k})_{y}.$$ 
Non-positive curvature implies that $(d\exp_x)_v$ is invertible for all $v\in\mathscr T_x\mathscr M\setminus\{0\}$ and in particular $(d\exp^{-1}_{x_k})_y=(d\exp_{x_k})_{\eta'(0)}^{-1}$. We can write
$$(dF)_{\exp_{x_k}(\eta'(0))}=(d\exp_{x_k})_{A_{\gamma_k}\eta'(0)}\cdot (dA_{\gamma_k})_{\eta'(0)}\cdot (d\exp_{x_k})_{\eta'(0)}^{-1}$$
which in turn yields
$$(dA_{\gamma})_{\eta'(0)}=(d\exp_{x_k})^{-1}_{A_{\gamma_k}\eta'(0)}\cdot(dF)_{\exp_{x_k}(\eta'(0))}\cdot (d\exp_{x_k})_{\eta'(0)}$$ 
consequently 
$$\|(dA_{\gamma_k})_{\eta'(0)}\|_{\mathscr F}\leq\|(d\exp_x)^{-1}_{A_{\gamma_k}\eta'(0)}\|_{\mathscr F}\cdot\|(dF)_{\exp_x(\eta'(0))}\|_{\mathscr F}\cdot \|(d\exp_x)_{\eta'(0)}\|_{\mathscr F}.$$ 
Note that $\|(d\exp_x)_v\|_{\mathscr F}>0$ for all $v\neq 0$. Moreover $\|(dA_{\gamma})_v\|_{\mathscr F}=1/|\gamma'(0)|^{d-1}|v|^{d-1}>0$ for every $v\in\mathscr T_x\mathscr M\setminus\{0\}$. Therefore 
 $$JF(\exp_{x_k}(\eta'(0)))\geq \frac{\|(dA_{\gamma_k})_{\eta'(0)}\|_{\mathscr F}}{\|(d\exp_{x_k})^{-1}_{A_{\gamma_k}\eta'(0)}\|_{\mathscr F}\|(d\exp_{x_k})_{\eta'(0)}\|_{\mathscr F}}>0,\;\eta'(0)\in \mathcal T_{x_k}\mathscr M\setminus\{0\}$$
 equivalently $JF(y)>0,\;y\neq x_k$, i.e. $\zeta_{\gamma_k}(y)>0$ for all $y\in B_k\setminus\{x_k\}$. Let $\psi_R$ be defined as in the proof of Theorem \ref{th:1}, then we have that $\psi_R$ is $\mu$-a.e. geodesically differentiable on $\mathscr M$. Let $x$ be a point in $\mathscr M$ where $\psi_R$ is differentiable and $\textbf u:U\to\mathscr M$ with $U\subseteq \mathbb R^d$ be a parameterization at $x$. For a geodesic $\gamma\in\Gamma_x^0(\mathscr M)$ we then have $\textbf u^{-1}(\gamma(t))=(u_1(t),u_2(t),\cdots,u_d(t))$. Restricting $\psi_R$ to $\gamma$ yields 
 $$\frac{d}{dt}\psi_R(\gamma(t))\Big|_{t=0}=\sum_{i=1}^du_i'(0)\,\frac{\partial \psi_R}{\partial u_i}=\langle \nabla \psi_R(x),\gamma'(0)\rangle_x$$
 which together with  $D\psi_R(x;\gamma)=\frac{d}{dt}\psi_R(\gamma(t))\Big|_{t=0}$ implies
 $D\psi_R(x;\gamma)=\langle \nabla \psi_R(x),\gamma'(0)\rangle_x.$
 By similar arguments we obtain that
 $D_xc(x,Tx;\gamma)=\langle \nabla c(x,T(x)),\gamma'(0)\rangle_x.$
 On the other hand elementary calculations show that $\nabla c(x,Tx)=-\exp^{-1}_xTx$, so
 \begin{equation*}
 	\langle\nabla\psi_R(x),\gamma'(0)\rangle_x-\langle \exp^{-1}_xTx,\gamma'(0)\rangle_x=0,\quad\forall \gamma\in\Gamma^0_x(\mathscr M),\;\mu-a.e..
 	\end{equation*} 
 Taking $\gamma\in\Gamma_x^0(\mathscr M)$ with $\gamma'(0)=\nabla\psi(x)-\exp^{-1}_xTx$ and $\widetilde\nabla\psi=\nabla\psi_R$ yields the claim. 	
\end{proof}

\begin{remark}
\label{r:manifold}
Theorem \ref{th:manifold} is a corollary of the well-known result of \cite[Theorem 1]{Feldman} applied to manifolds with non-positive curvature. 
\end{remark}

\subsection{The case of polyhedral complexes} Another class of interesting spaces are the Euclidean polyhedral complexes that are  $\CAT(0)$ spaces. Such a space is built from a finite collection of Euclidean polytopes glued in a way satisfying certain conditions, e.g. see \cite[Definition 7.37]{Brid}. Not all Euclidean polyhedral complexes are $\CAT(0)$ spaces. A theorem of Gromov, e.g. see \cite[Theorem 5.18, \S2]{Brid}, provides necessary and sufficient condition for when a Euclidean polyhedral complex is a $\CAT(0)$ space.
As a byproduct of Theorem \ref{th:1} we obtain the following result for such polyhedral complexes.

\begin{thm}
\label{th:poly-complex}
Let $X$ be a $\CAT(0)$ polyhedral complex and $\mu,\nu\in\mathscr P_2(X)$ with $\mu$ absolutely continuous, then Theorem \ref{th:1} holds true.
\end{thm}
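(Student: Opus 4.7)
The plan is to reduce this to Theorem \ref{th:1} by verifying its two hypotheses: positive angles and disintegration regularity, both $\mathscr{H}_d$-a.e.\ uniformly. The guiding observation is that in a $\CAT(0)$ Euclidean polyhedral complex the singular set, meaning the union of those faces at which the tangent cone fails to be isometric to $\mathbb{R}^d$, is contained in the codimension-$2$ skeleton and hence has Hausdorff dimension at most $d-2$. By Lemma \ref{l:Hausdorff} this singular set is $\mathscr{H}_d$-null, so $\mathscr{H}_d$-a.e.\ point of $X$ lies either in the interior of a top-dimensional polytope or in the interior of a codim-$1$ face shared by two such polytopes; in both cases the local geometry is flat Euclidean, which lets us recycle the arguments used for Riemannian manifolds in Theorem \ref{th:manifold}.

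For positive angles, let $x$ be a flat point and let $r>0$ be smaller than the distance from $x$ to the singular skeleton. Then $B(x,r)$ is isometric to an open ball in $\mathbb{R}^d$ (two half-spaces glued along a hyperplane are still Euclidean), so by \cite[Corollary IA.7]{Brid} the Alexandrov angle between any two geodesics $\gamma,\eta\in\Gamma_x^0(B(x,r))$ equals the Euclidean angle between $\gamma'(0)$ and $\eta'(0)$; this is strictly positive unless one geodesic is contained in the other, exactly as in the proof of Theorem \ref{th:manifold}. For the uniformity statement, we use the fact that every ball in $X$ contains such a flat point with a definite distance to the singular skeleton, so one can extract a countable dense family $\{x_k\}$ with a common radius $r>0$, which is all that the covering argument in the proof of Theorem \ref{th:1} requires.

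For disintegration regularity, at a flat point $x$ with $B[x,r]$ Euclidean, fix $\gamma\in\Gamma_x^0(B[x,r])$ and define $F(y)=\exp_x(A_\gamma \exp_x^{-1}(y))$ as in the Riemannian proof, where $\exp_x$ is here just the inverse of normal coordinates. The identical chain of identifications shows $F=R_\gamma$ on $B[x,r]$, and Federer's area formula (Lemma \ref{l:Federer}) gives $\zeta_\gamma(y)=JF(y)$ $\mathscr{H}_d$-a.e., with $JF>0$ off $\{x\}$ by the same Jacobian estimate involving $(d\exp_x)_v$ and $(dA_\gamma)_v$; since $(d\exp_x)_v$ is the identity in the flat chart it is certainly invertible. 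Condition (ii) of Definition \ref{d:disintegration} is supplied by Lemma \ref{l:mappings}, whose proof only uses nonexpansiveness of geodesic convex combinations and therefore holds in any $\CAT(0)$ space.

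The main obstacle is the uniformity clause: near codim-$2$ faces the maximal flat ball around $x$ shrinks to zero, so $\inf_{x\in X^{\mathrm{reg}}} r_x=0$ in the strictest reading. This is handled by noting that Theorem \ref{th:1} in fact only needs a countable family of centres with a common radius covering $X$ up to an $\mathscr{H}_d$-null set, which is straightforward to arrange by selecting centres at controlled distance from the codim-$2$ skeleton and invoking Proposition \ref{p:normal-points} to shrink radii when necessary. With both hypotheses verified, Theorem \ref{th:1} applies directly and yields the existence and uniqueness of the optimal transport plan and map for any $\mu,\nu\in\mathscr{P}_2(X)$ with $\mu$ absolutely continuous.
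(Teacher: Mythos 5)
Your argument is essentially the paper's: discard the $(d-1)$-skeleton as $\mathscr H_d$-null, verify positive angles on the interiors of top-dimensional polytopes via the Euclidean angle, and obtain disintegration regularity from the explicit radial projection together with Federer's formula (Lemma \ref{l:Federer}) and a Jacobian that is positive away from the centre. Two small slips that do not affect the conclusion: interior points of codimension-one faces need not be flat (three or more top-dimensional cells may meet along such a face, but these points are already $\mathscr H_d$-null, so only the interiors of top cells matter, exactly as in the paper), and a single common radius cannot cover $X$ up to a null set because the admissible radius degenerates near the skeleton --- what is actually needed, as your parenthetical about shrinking radii suggests, is a countable cover by good balls of varying radii, which is also what the paper's proof of Theorem \ref{th:1} implicitly relies on.
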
 
\begin{proof}
Let $d=\max\{\dim_HP\,:\,P\in\mathcal C\}$ where $\mathcal C$ is a collection of Euclidean polytopes $P$.
It is enough to show that the three conditions hold $\mathscr H_d$-a.e., consequently only for interiors of polytopes $P$ such that $\dim_HP=d$. The positive angles property holds trivially in the interior of a polytope, since one can think of it as a (usual) convex set in its Euclidean ambient space. To prove the last property take any $x\in P$ an interior point and consider $\gamma\in\Gamma_x^0(P)$ for which $\ell(\gamma)=\max\{d(x,p)\,:\,p\;\text{is a vertex of}\,P\}$. This maximal value exists since there are at most a finite number of vertices for each polytope $P$.  Moreover $d(x,y)\leq \ell(\gamma)$ for all $y\in P$ implies in particular that $R_{\gamma}(y)$ is an interior point of $\gamma$ for all $y\in P$, except possibly for some of the vertices of $P$. It is not difficult to show that $$R_{\gamma}(y)=\frac{p_{\gamma}-x}{\|p_{\gamma}-x\|_{\mathbb E}}\,\|y-x\|_{\mathbb E}, \quad\text{for all}\;y\in P\setminus\{\text{vertices of}\;P\}.$$
Here $\mathbb E$ denotes the Euclidean ambient space of $P$ and $p_{\gamma}$ is the vertex of $P$ that realizes $\max\{d(x,p)\,:\,p\;\text{is a vertex of}\,P\}$ and such that $\gamma$ connects $x$ with $p_{\gamma}$. Again an application of Lemma \ref{l:Federer} now with $F=R_{\gamma}$ and \eqref{eq:absolute} yields $JR_{\gamma}=\zeta_{\gamma},\,\mathscr H_d$-a.e. on $P$. It suffices to show that $JR_{\gamma}>0,\,\mathscr H_d$-a.e. on $P$. Notice that $JR_{\gamma}(y)=\|(dR_{\gamma})_y\|_{\mathscr F}$. Elementary calculations show that $$\|(dR_{\gamma})_y\|_{\mathscr F}=\Big(\frac{1}{\|p_{\gamma}-x\|_{\mathbb E}\,\|y-x\|_{\mathbb E}}\Big)^{d-1}>0,\;\text{for all}\;y\in P\setminus\{x\}.$$ 
Thus $\zeta_{\gamma}(y)>0$ for all $y\in P\setminus\{x\}$, and since $x\in P$ is an arbitrary interior point, then $X$ satisfies the disintegration property on $P$ for $\mathscr H_d$-a.a. $x\in P$. Since $P$ is arbitrary then $X$ satisfies the disintegration property $\mathscr H_d$-a.e.. This completes the proof. 
\end{proof}

\section{A polar factorization theorem}
\label{s:polar}

\begin{lemma}
\label{l:invertible}
Let $(X,d), \mu,\nu,\psi, T$ be as in Theorem \ref{th:1} and  conditions $(i),(ii)$ be satisfied. If additionally $\nu\ll\mathscr H_d$, then there exists $T^{*}:X\to X$ such that $\pi^*=(\Id,T^{*}_{\#})\nu$ is the unique optimal transport plan from $\nu$ to $\mu$ and $T^*Tx=x$ for $\mu$-a.a. $x\in X$. In particular $T$ and $T^*$ are inverse maps of each other.
\end{lemma}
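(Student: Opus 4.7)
The plan is to exploit the symmetry of the cost $c(x,y)=d(x,y)^2/2$ together with the uniqueness guaranteed by Theorem \ref{th:1}, applied with the roles of $\mu$ and $\nu$ swapped.

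First I would apply Theorem \ref{th:1} directly to the pair $(\nu,\mu)$. Since $\nu \ll \mathscr H_d$ by the extra hypothesis, and conditions (i), (ii) hold on $X$ $\mathscr H_d$-a.e.\ uniformly, the theorem produces a Borel map $T^*:X\to X$ such that $\pi^* = (\Id,T^*)_{\#}\nu$ is the unique optimal transport plan in $\Pi(\nu,\mu)$, with $T^*(y)\in\partial_c\psi^*(y)$ for some $c$-convex function $\psi^*$.

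Next I would use the symmetry $c(x,y)=c(y,x)$ to identify $\pi^*$ with the reversal of $\pi$. Let $s:X\times X\to X\times X$ denote the swap map $s(x,y)=(y,x)$. Then $s_{\#}\pi\in\Pi(\nu,\mu)$ and
\begin{equation*}
\int_{X\times X} c(x,y)\,d(s_{\#}\pi)(x,y) = \int_{X\times X} c(y,x)\,d\pi(x,y) = \int_{X\times X} c(x,y)\,d\pi(x,y),
\end{equation*}
so $s_{\#}\pi$ attains the minimum of the Kantorovich functional on $\Pi(\nu,\mu)$ (any strictly better plan would give, after swapping back, a strictly better competitor against $\pi$ in $\Pi(\mu,\nu)$). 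By uniqueness from Theorem \ref{th:1} I would therefore conclude
\begin{equation*}
(\Id,T^*)_{\#}\nu \;=\; \pi^* \;=\; s_{\#}\pi \;=\; s_{\#}(\Id,T)_{\#}\mu \;=\; (T,\Id)_{\#}\mu.
\end{equation*}

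Finally I would translate this measure identity into the pointwise relation $T^*\circ T=\Id$. The measure $(\Id,T^*)_{\#}\nu$ is supported on the graph of $T^*$, while $(T,\Id)_{\#}\mu$ is supported on $\{(T(x),x):x\in X\}$. Their equality means that for $\mu$-a.e.\ $x$, the pair $(T(x),x)$ lies on $\mathrm{graph}(T^*)$, i.e.\ $x = T^*(T(x))$. Concretely, testing against any bounded Borel $\phi(y,z)$ and using $\nu=T_{\#}\mu$ gives
\begin{equation*}
\int_X \phi(T(x),T^*(T(x)))\,d\mu(x) \;=\; \int_X \phi(T(x),x)\,d\mu(x),
\end{equation*}
and choosing $\phi(y,z)=d(T^*(y),z)^2\wedge 1$ (for instance) together with a separability argument forces $T^*(T(x))=x$ for $\mu$-a.e.\ $x$. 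The symmetric identity $T(T^*(y))=y$ for $\nu$-a.e.\ $y$ follows by exchanging the roles. The main (but mild) obstacle is the last step: passing cleanly from equality of graph-supported measures to a pointwise a.e.\ identity; however this is standard once one notes that both measures are concentrated on graphs of Borel maps and have the same first marginal $\nu$, so the conditional distributions on the second factor must agree $\nu$-a.e., forcing $T^*\circ T=\Id$ $\mu$-a.e.
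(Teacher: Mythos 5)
Your argument is correct, but it takes a genuinely different route from the paper's. Both proofs begin the same way, by applying Theorem \ref{th:1} to the reversed pair $(\nu,\mu)$ to obtain $T^*$ and the unique optimal plan $\pi^*\in\Pi(\nu,\mu)$. From there the paper works on the dual side: it uses that $T^*(y)\in\partial_c\psi^*(y)$ for a $c$-convex $\psi^*$, invokes the relation $\psi^*=-\psi^c$, $(\psi^*)^c=-\psi$ between the potentials of the forward and reverse problems, and deduces $\partial_c\psi(T^*y)=\{y\}$ together with the concentration of $\pi$ on the graph of $T$ to conclude $T^*Tx=x$. You instead work on the primal side: the swap map $s(x,y)=(y,x)$ sends $\pi$ to an admissible plan in $\Pi(\nu,\mu)$ with the same cost (by symmetry of $c$), which must be optimal, hence equal to $\pi^*$ by uniqueness; equating the two graph-concentrated measures $(\Id,T^*)_{\#}\nu=(T,\Id)_{\#}\mu$ then forces $T^*\circ T=\Id$ $\mu$-a.e. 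Your approach is more elementary and self-contained, needing only symmetry of the cost and the uniqueness statement, and it bypasses the duality bookkeeping; the paper's approach additionally records the explicit relation between the two Kantorovich potentials. One small simplification: the closing ``separability argument'' is not needed, since testing the identity
\begin{equation*}
\int_X \phi\bigl(T(x),T^*(T(x))\bigr)\,d\mu(x)=\int_X \phi\bigl(T(x),x\bigr)\,d\mu(x)
\end{equation*}
against the single function $\phi(y,z)=\min\{d(T^*(y),z)^2,1\}$ makes the left-hand side vanish identically and hence gives $d(T^*(T(x)),x)=0$ for $\mu$-a.a. $x$ at once; the companion identity $TT^*y=y$ $\nu$-a.e. follows the same way from $\pi=s_{\#}\pi^*$ with $\phi(x,y)=\min\{d(T(x),y)^2,1\}$.
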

\begin{proof}
The existence and uniqueness of $T^*$ follows directly from Theorem \ref{th:1} since all conditions hold by assumption. We need only show that $T^*Tx=x$ for all $x\in X$. By Theorem \ref{th:1} again it follows that $T^*(y)\in\partial_c\psi^*(y),\;\nu$-a.e. for some $c$-convex function $\psi^*$. Note by Corollary \ref{l:Kantorovich} we have that $\partial_c\psi^*(y)$ consists of a single element and that $(\psi^*)^c(T^*y)-\psi^*(y)=c(y,T^*y)$ for $\nu$-a.a. $y\in X$. 
In view of \cite[Remark 5.15]{Villani} we have that $\psi^*=-\psi^c$ and $(\psi^*)^c=-\psi$, consequently $\psi^c(y)-\psi(T^*y)=c(y,T^*y)$ for $\nu$-a.a. $y\in X$. This implies in particular that $\partial_c\psi(T^*y)=\{y\}$ for $\nu$-a.a. $y\in X$. But $\Gamma=\{(x, Tx)\,:\,x\in X\}\subseteq \partial_c\psi$ and $\pi(\partial_c\psi\setminus\Gamma)=0$ imply that $(T^*y,y)\in\Gamma$ for $\nu$-a.a. $y\in X$, that is $y=Tx$ and $T^*y=x$ for $\mu$-a.a. $x\in X$. Therefore $T^*Tx=x$ for $\mu$-a.a. $x\in X$. Similarly $TT^*y=y$ for $\nu$-a.a. $y\in X$. We conclude that $T$ and $T^*$ are inverse maps of each other. This completes the proof.
\end{proof}

\begin{thm}
\label{th:polarization}
Let $(X,d)$ be a $\CAT(0)$ space as in Theorem \ref{th:1}. Consider a Borel map $s:X\to X$ and a probability measure $\mu\ll\mathscr H_d$. If $s_{\#}\mu\ll\mathscr H_d$ then $s=T\circ u$ where $T$ is the unique optimal transport map between $\mu$ and $s_{\#}\mu$ and $u$ satisfies $u_{\#}\mu=\mu$.  The factoring maps $T,u$ are unique up to $\mu$-negligible sets in $X$.
\end{thm}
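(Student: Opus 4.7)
The plan is to follow the classical Brenier-style argument, using the invertibility result in Lemma \ref{l:invertible} to produce the measure-preserving factor. Set $\nu := s_{\#}\mu$. By hypothesis $\mu,\nu\ll\mathscr H_d$, so Theorem \ref{th:1} applies symmetrically and produces a Borel optimal transport map $T:X\to X$ pushing $\mu$ to $\nu$, together with (swapping the roles of the two measures) a Borel optimal transport map $T^{*}:X\to X$ pushing $\nu$ to $\mu$, each unique up to negligible sets for the corresponding source measure. Lemma \ref{l:invertible} then gives that $T$ and $T^{*}$ are mutual inverses in the measure-theoretic sense: $T^{*}\circ T=\Id$ holds $\mu$-a.e.\ and $T\circ T^{*}=\Id$ holds $\nu$-a.e.

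With these in hand I would define the candidate measure-preserving factor by $u:=T^{*}\circ s$. It is Borel as the composition of two Borel maps. I would then verify the two required properties:
\begin{enumerate}[(a)]
\item $u_{\#}\mu=\mu$: by the push-forward chain rule,
\[
u_{\#}\mu=(T^{*}\circ s)_{\#}\mu=T^{*}_{\#}(s_{\#}\mu)=T^{*}_{\#}\nu=\mu.
\]
\item $s=T\circ u$ $\mu$-a.e.: let $N\subseteq X$ be a $\nu$-null Borel set off which $T\circ T^{*}=\Id$. Since $s_{\#}\mu=\nu$, the preimage $s^{-1}(N)$ satisfies $\mu(s^{-1}(N))=\nu(N)=0$, hence for $\mu$-a.e.\ $x$ one has $s(x)\notin N$ and therefore
\[
T(u(x))=T\bigl(T^{*}(s(x))\bigr)=s(x).
\]
\end{enumerate}

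For the uniqueness clause, suppose $s=\widetilde T\circ\widetilde u$ is another factorization of the required form, meaning $\widetilde T$ is the (unique) optimal transport map from $\mu$ to $s_{\#}\mu$ and $\widetilde u_{\#}\mu=\mu$. Applying the push-forward chain rule again gives $\widetilde T_{\#}\mu=s_{\#}\mu=\nu$, and then Theorem \ref{th:1} (uniqueness part) forces $\widetilde T=T$ $\mu$-a.e. Composing both sides of $T\circ\widetilde u=s=T\circ u$ on the left by $T^{*}$ and using $T^{*}\circ T=\Id$ $\mu$-a.e.\ together with $\widetilde u_{\#}\mu=u_{\#}\mu=\mu$ yields $\widetilde u=u$ $\mu$-a.e.

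The only subtle step is the propagation of almost-everywhere identities through the compositions in (b) and in the uniqueness argument: the key point is that the exceptional null set of the identity $T\circ T^{*}=\Id$ lives in the target space and is $\nu$-null, so one must pull it back through $s$ using precisely the hypothesis $s_{\#}\mu=\nu$. Everything else reduces to the push-forward chain rule and Borel measurability of compositions, which are routine.
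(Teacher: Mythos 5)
Your proposal is correct and follows essentially the same route as the paper: define $u=T^{*}\circ s$ via the inverse map $T^{*}$ from Lemma \ref{l:invertible}, verify $u_{\#}\mu=\mu$ by the push-forward chain rule, and obtain uniqueness by composing with $T^{*}$ on the left. Your treatment is in fact slightly more careful than the paper's in tracking how the $\nu$-null exceptional set of $T\circ T^{*}=\Id$ pulls back through $s$ via $s_{\#}\mu=\nu$, a point the paper leaves implicit.
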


\begin{proof}
First note that $T_{\#}\mu=s_{\#}\mu$ and by Lemma \ref{l:invertible} there exists a map $T^*:X\to X$ with $T^*_{\#}(s_{\#}\mu)=\mu$ and $T^*Tx=x$ for $\mu$-a.a. $x\in X$ that is the unique optimal transport mapping from $s_{\#}\mu$ to $\mu$. Define $u=T^*\circ s,\,\mu$-a.a. $x\in X$. Note that for any $\mu$-integrable function $f$ on $X$ we have
\begin{align*}
\int_Xf(x)\,d\mu(x)=\int_Xf(x)\,d(T^*_{\#}s_{\#}\mu)(x)=\int_Xf(x)\,d(u_{\#}\mu)(x).
%
\end{align*}
Consequently $u_{\#}\mu=\mu$. To show uniqueness, let $T',u'$ be such that $s=T'\circ u'$. Evidently $T=T',\,\mu$-a.e. on $X$ by Theorem \ref{th:1}. Therefore $s(x)=T(u'(x))$ for $\mu$-a.a. $x\in X$. But then $u(x)=T^*(s(x))=T^*(T(u'(x)))=u'(x)$ for $\mu$-a.a. $x\in X$. 
\end{proof}

\bibliographystyle{plain}
\bibliography{references}

%
%
%

\end{document}